\theoremstyle{plain}
\newtheorem{thm}{Theorem}[section]
\newtheorem{claim}{Claim}[thm]
\newtheorem{lem}[thm]{Lemma}
\newtheorem{rem}[thm]{Remark}
\newtheorem{prop}[thm]{Proposition}
\newtheorem{cor}[thm]{Corollary}
\newtheorem{defn}[thm]{Definition}
\newtheorem{exmp}[thm]{Example}
\newtheorem{ques}[thm]{Question}
\theoremstyle{definition}
\theoremstyle{remark}
\numberwithin{equation}{section}
\newcommand{\BA}{\mathbf{BA}}
\newcommand{\SL}{\operatorname{SL}}
\newcommand{\GL}{\operatorname{GL}}
\newcommand{\SO}{\operatorname{SO}}
\newcommand{\PSL}{\operatorname{PSL}}
\newcommand{\Ad}{\operatorname{Ad}}
\newcommand{\Lie}{\operatorname{Lie}}
\newcommand{\RP}{\operatorname{Re}}
\newcommand{\area}{\operatorname{area}}
\newcommand{\im}{\operatorname{Im}}
\newcommand\mr{M_{m,n}}
\newcommand\e{\varepsilon}
\newcommand\amr{$A\in M_{m,n}$}
\newcommand\da{Diophantine approximation}
\newcommand{\nz}{\smallsetminus\{0\}}
\newcommand{\Q}{{\mathbb {Q}}}
\newcommand{\R}{{\mathbb{R}}}
\newcommand{\Z}{{\mathbb{Z}}}
\newcommand{\N}{{\mathbb{N}}}
\newcommand{\Hyp}{{\mathbb{H}}}
\newcommand\hd{Hausdorff dimension}
\newcommand{\vv}{{\bf v}}
\newcommand{\vr}{{\bf r}}
\newcommand{\vp}{{\bf p}}
\newcommand{\vq}{{\bf q}}
\newcommand{\x}{{\bf x}}
\newcommand{\ignore}[1]{{}}
\newcommand{\ggm}{G/\Gamma}
\newcommand\eq[2]{
\begin{equation}
\label{eq:#1}
{#2}
\end{equation}
}
\newcommand{\equ}[1]{\eqref{eq:#1}}
\title[Diophantine approximation  in Euclidean norm]{A zero-one law for uniform Diophantine approximation  in Euclidean norm}
\author{Dmitry Kleinbock and Anurag Rao}
\address{Brandeis University, Waltham MA
02454-9110 {\tt kleinboc@brandeis.edu}}
\address{Brandeis University, Waltham MA 02454-9110
{\tt anrg@brandeis.edu}}
\thanks{Research supported by  NSF grants DMS-1600814 and DMS-1900560}
\date{July 23, 2020}
\begin{document}

\begin{abstract}
We study a norm sensitive Diophantine approximation problem arising {from} the work of Davenport and Schmidt on the improvement of Dirichlet's theorem. Its supremum norm case was recently considered by the first-named author and Wadleigh \cite{kw1}, and  here we extend the set-up by replacing the supremum norm with an arbitrary norm.  This gives rise to a class of shrinking target problems for one-parameter diagonal flows on the space of lattices,  with the targets being neighborhoods of the critical locus of the suitably scaled norm ball. We use methods from geometry of numbers to 
generalized a result due to Andersen and Duke \cite{AD} on measure zero and uncountability of the set of numbers (in some cases, matrices) for which Minkowski approximation theorem can be improved. The choice of the Euclidean norm on $\mathbb{R}^2$ corresponds to studying geodesics on a hyperbolic surface which visit a decreasing family of balls. An application of the dynamical Borel-Cantelli lemma of Maucourant \cite{mau} produces, given an approximation function $\psi$, a zero-one law for the set of $\alpha \in \mathbb{R}$ such that   for all large enough $t$  the inequality $\left(\frac{\alpha q -p}{\psi(t)}\right)^2 + \left(\frac{q}{t}\right)^2 < \frac{2}{\sqrt{3}}$ has non-trivial integer solutions.
\end{abstract}

\maketitle

\section{Introduction}\label{intro}

The theory of approximation of real numbers by rational numbers starts with Dirichlet's Theorem (1842): 
\eq{dt}{ \forall\,\alpha\in \mathbb{R}\quad 
\forall \,t> 1\quad \exists\,q \in \N  \text{ with }  \begin{cases}\langle q\alpha \rangle
&\le  \  1/t\\
\quad q
&< \ t\end{cases}\quad;}
here and hereafter $\langle x \rangle$ stands for the distance from $x\in\R$ to a nearest integer. 
\noindent 
{See e.g.\ \cite[Theorem I.I]{Cassels} or \cite[Theorem I.1A]{Schmidt}.} The standard application of \equ{dt} is the following corollary: 
\eq{dc}{ \forall\,\alpha\in \mathbb{R}\quad  
\exists\,\infty\text{  many }q \in \N  \text{ with }  \langle q\alpha \rangle <
1/{q}.}

The two   statements above show two possible ways to pose \da\ problems, often (see e.g.\ \cite{W}) referred to as {\sl uniform} vs.\ {\sl asymptotic}: that is, looking for  solvability of inequalities for all large enough values of   certain parameters  vs.\  for infinitely many (a distinction between limsup and liminf sets). The  rate of approximation given in \equ{dt} and \equ{dc}  works for all $\alpha$, which  serves as a beginning of the {\sl metric theory of \da}, concerned with understanding sets of $\alpha$ satisfying similar conclusions  but with
the right hand sides  replaced by faster decaying functions of $t$ and $q$ respectively.

Those sets are  well studied in the setting of \equ{dc}. Indeed, for a function $\psi: \R_+ \to \R_+$ one considers
$${W}(\psi):= \big\{\alpha\in \R: \exists\,\infty\text{  many }q \in \N  \text{ with }  \langle q\alpha \rangle <
\psi(q)\big\},$$ the set of {\sl $\psi$-approximable} real numbers.
With the notation $\psi_k(t) := 1/t^k$,
\equ{dc} asserts that ${W}(\psi_1) = \mathbb{R}$; moreover {a theorem of Hurwitz (see \cite[1.2F]{Schmidt})} says that 
${W}(c\psi_1) = \mathbb{R}$ for all $c \ge 1/\sqrt{5}$.
Numbers which do not belong to ${W}(c\psi_1) $ for some $c > 0$ 
are called {\sl badly approximable}; we shall denote the set of those numbers by 
$\BA$.
If $\psi$ is non-increasing, Khintchine's Theorem gives the criterion for the Lebesgue measure of ${W}(\psi)$ to be zero or full -- namely, the convergence/divergence of  the series $\sum_k\psi(k)$.

Let us now briefly describe what is known in  the setting of \equ{dt}.   Following \cite{kw1}, for $\psi$ as above say that $\alpha$ is {\sl $\psi$-Dirichlet} if
for all large enough $t$ 
\eq{dtpsi}{ 
\text{there exists $q\in\N$ with }\begin{cases}\langle q\alpha \rangle
&<  \ \psi(t)\\
\quad q
&< \ t\end{cases}\quad.}

Let us denote the set of $\psi$-Dirichlet numbers by $D_{{\infty}}(\psi)$ (the role of the subscript $\infty$ will be clarified below). 
It is immediate\footnote{More precisely, if $\alpha\notin\Q$ (resp.\ if $\alpha\in\Q$), the system \equ{dtpsi} with $\psi = \psi_1$  is solvable for all $t > 1$ (resp.\  for all sufficiently large $t$).} from \equ{dt} 
that $D_{{\infty}}(\psi_1) = \mathbb{R}$.
Also let us say that $\alpha$ is   {\sl Dirichlet-improvable} (see e.g.\ \cite[Definition 5.8]{EW})
if it belongs to $D_{{\infty}}(c\psi_1)$ for some $c<1$. 
Denote by 
$$ {\widehat D_\infty := \bigcup_{c<1}D_{{\infty}}(c\psi_1)}$$
the set of Dirichlet-improvable numbers.
Morimoto (\cite{Mor}, see also \cite[Theorem 1]{Davenport-Schmidt}) was the first to observe that 
the set $\widehat D_\infty$ coincides with  $\Q\cup\BA$, and in particular  has Lebesgue measure zero and is {\sl thick}, that is, intersects  any non-empty open subset  of $\R$  
in a set of full \hd.
%
The latter property, originally established by Jarn\'\i k \cite{J}, was upgraded by Schmidt \cite{S} to   being a {\sl winning set}, and then further strengthened by McMullen \cite{Mc} to {\sl absolute winning}; 
see Remark \ref{haw} for more detail.
 \smallskip
 
Further progress in the study of the sets $D_{{\infty}}(\psi)$ was made in a recent paper \cite{kw1} by the first named author and Wadleigh. Namely, the following was proved: 

{\begin{thm}[\cite{kw1}, Theorems 1.7 and 1.8] \label{dtpsi} Let $\psi$ be a non-increasing function such that $t\psi(t)<1$ for all sufficiently large $t$. Then 
\begin{itemize}
\item[\rm (a)] $D_{{\infty}}(\psi)^c\neq \varnothing$;
\item[\rm (b)] if, in addition, the  function $t\mapsto t\psi(t)$ is non-decreasing,  then the Lebesgue measure of $D_{{\infty}}(\psi)$ (resp.\  of $D_{{\infty}}(\psi)^c$) is zero if \eq{condition}{
{\sum_k
{-\log\big(1-k\psi(k)\big)\left(\tfrac1k-\psi(k)\right)}
}
= \infty \hspace{3mm}(\text{resp.} <\infty).}
\end{itemize}
\end{thm}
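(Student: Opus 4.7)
The plan is to translate the problem into the continued fraction expansion of $\alpha$ and then apply a Borel--Cantelli argument on the Gauss dynamical system. For irrational $\alpha$ with convergents $p_n/q_n$, set $\beta_n := |q_n\alpha - p_n|$. By the theory of best approximations, $\min_{0 < q \le t}\langle q\alpha\rangle = \beta_{n(t)}$ where $q_{n(t)}$ is the largest convergent denominator not exceeding $t$. Since $\psi$ is non-increasing, the condition $\beta_n < \psi(t)$ for $t \in [q_n, q_{n+1})$ is tightest as $t \to q_{n+1}^-$, so $\alpha \in D_{\infty}(\psi)$ iff $\beta_n \le \psi(q_{n+1})$ for all large $n$ (up to rational exceptions). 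The identity $\beta_n = (q_{n+1} + q_n/\alpha_{n+2})^{-1}$, where $\alpha_{n+2}$ is the $(n+2)$-nd complete quotient, recasts the failure event $E_n := \{\beta_n > \psi(q_{n+1})\}$ as $E_n = \bigl\{\alpha_{n+2} > c_n q_n / ((1-c_n)q_{n+1})\bigr\}$, with $c_n := q_{n+1}\psi(q_{n+1}) \in (0,1)$ by hypothesis.

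Part (a) is now elementary: $q_{n+1}\beta_n$ lies in $(1/2,1)$ and tends to $1$ as $\alpha_{n+2} \to \infty$, while $c_n < 1$ strictly. Inductively choose the continued fraction digits $a_1, a_2, \ldots$ so that along some subsequence $a_{n+2}$ is large enough to force $q_{n+1}\beta_n > c_n$; any such $\alpha$ belongs to $D_{\infty}(\psi)^c$.

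For part (b), the Gauss measure $d\mu = (\log 2)^{-1}(1+x)^{-1}dx$ assigns $\mu(\alpha_{n+2} > N) = \log(1+1/N)/\log 2$, so conditional on $a_1,\ldots,a_{n+1}$ one obtains $\mu(E_n \mid a_1,\ldots,a_{n+1}) = \log\!\bigl(1 + (1-c_n)q_{n+1}/(c_n q_n)\bigr)/\log 2$ up to a bounded distortion arising from the density of the natural extension. Averaging over $a_1,\ldots,a_{n+1}$, grouping the indices $n$ by the integer interval $[q_n, q_{n+1})$, and using L\'evy's theorem for the growth of $q_n$ together with the monotonicity of $t \mapsto t\psi(t)$, one should be able to convert $\sum_n \mu(E_n)$ into a sum comparable to the series in \equ{condition}. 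Convergence will then yield $\mu(\limsup E_n) = 0$ via the standard Borel--Cantelli lemma; divergence yields $\mu(\limsup E_n) = 1$ through a quasi-independent Borel--Cantelli lemma powered by exponential decay of correlations for the Gauss map.

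The main obstacle is the final sum conversion: one must match the factor $-\log(1-k\psi(k))$ with the Gauss-probability logarithm $\log(1+(1-c_n)q_{n+1}/(c_n q_n))$ and the factor $(1/k - \psi(k))$ with the density of convergent denominators near $k$, keeping track uniformly of both regimes $c_n$ near $0$ and $c_n$ near $1$ without losing logarithmic factors. The supplementary monotonicity hypothesis on $t\mapsto t\psi(t)$ should be precisely what makes this bookkeeping tractable. The divergent half of Borel--Cantelli is the other delicate step, since the events $E_n$ are genuinely dependent and the requisite quasi-independence has to be extracted from mixing of the Gauss map rather than invoked as independence.
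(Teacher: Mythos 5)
First, note that the paper does not prove this statement itself: Theorem \ref{dtpsi} is quoted from \cite{kw1}, and the only indication given of its proof is that it proceeds ``via a tight description of elements of $D_{\infty}(\psi)$ in terms of their continued fraction expansion.'' Your strategy is that same strategy. Your reduction is correct: by the best-approximation property and the monotonicity of $\psi$, membership in $D_\infty(\psi)$ is governed by the inequalities $|q_n\alpha-p_n|<\psi(q_{n+1})$ for large $n$, and the identity $|q_n\alpha-p_n|=(q_{n+1}+q_n/\alpha_{n+2})^{-1}$ turns the failure event into $\alpha_{n+2}\ge c_nq_n/((1-c_n)q_{n+1})$ with $c_n=q_{n+1}\psi(q_{n+1})$. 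Your part (a) is essentially a complete and correct argument (one only needs the target event along a subsequence of $n$, so the inductive choice of digits goes through even when $c_n\to 1$).

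For part (b), however, what you have written is a plan rather than a proof, and the gap sits exactly where you flag it. Two points deserve emphasis. First, the events $E_n$ are not of the standard shrinking-target form $\{T^{n+1}x\in A_n\}$ with deterministic targets: the threshold $c_nq_n/((1-c_n)q_{n+1})$ depends on the orbit itself through $q_n$, $q_{n+1}$ and $c_n$. Consequently no off-the-shelf dynamical Borel--Cantelli lemma (quasi-independence, mixing, or otherwise) applies directly; one must first sandwich $E_n$ between events with deterministic thresholds, using the monotonicity of $t\mapsto t\psi(t)$ together with deterministic and almost-everywhere bounds on $q_n$ (e.g.\ $q_n\ge 2^{(n-1)/2}$ and L\'evy's theorem). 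Your appeal to L\'evy's theorem inside the computation of $\mu(E_n)$ conflates an a.e.\ statement with a measure estimate and does not by itself resolve this. Second, the series comparison is where the specific shape of \equ{condition} comes from: writing $z_n=q_n/q_{n+1}=[0;a_{n+1},\dots,a_1]$ and $y_n=1/\alpha_{n+2}$, the failure event is $y_nz_n\le(1-c_n)/c_n$, and the natural-extension measure of $\{yz\le s\}$ is asymptotic to $s\log(1/s)$; this is what produces the factor $-\log(1-k\psi(k))\cdot(1-k\psi(k))$, while the $1/k$ comes from converting the sum over convergent indices $n$ into a sum over integers $k$ via $\log q_n\sim\gamma n$. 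Your conditional-measure formula $\log\bigl(1+(1-c_n)q_{n+1}/(c_nq_n)\bigr)$ is the right starting point, but the averaging over cylinders that turns it into $(1-c_n)\log\frac{1}{1-c_n}$, and then into the series \equ{condition}, is precisely the nontrivial content of the cited proof and cannot be left as ``one should be able to.'' So: right route, part (a) done, part (b) genuinely incomplete.
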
}

The above theorem was proved via a tight description of elements of $D_{{\infty}}(\psi)$ in terms of their continued fraction expansion. An alternative description can be easily provided via a reduction of the problem to 
dynamics on the space of lattices in $\R^2$. Indeed, 
let $u_\alpha :=  \left[ {\begin{array}{cc}
   1 & \alpha \\
   0 & 1 \\
  \end{array} } \right]$, and consider
  $$\Lambda_\alpha := u_\alpha
\mathbb{Z}^2 = \left\{\left( {\begin{array}{cc}
    \alpha q - p \\
    q \\
  \end{array} } \right): p,q\in\Z\right\}.$$ Denote by $B(r)$ the open ball in $\R^2$ of radius $r$ centered at $0$ with respect to the supremum norm on $\R^2$. Then it is easy to see that $\alpha\in D_{{\infty}}(\psi)$ if and only if for all large enough $t$ the lattice $\Lambda_\alpha$ has a nonzero vector inside the rectangular box $\left[ {\begin{array}{cc}
   \psi(t) & 0 \\
   0 & t \\
  \end{array} } \right]
  B(1)$.
  \smallskip

While the use of supremum norm arises naturally from the problem considered by Dirichlet, it seems natural to state similar problems for an arbitrary norm $\nu$, thereby replacing balls $B(r)$ with $$B_\nu(r):= \{\x\in\R^2: \nu(\x) < r\},$$
open balls centered at $0$ with respect to the norm $\nu$. A recent article \cite{AD} by Andersen and Duke provides evidence that this norm sensitive approximation problem was studied by Hermite only a few years after the work of Dirichlet, and later on by Minkowski. \ignore{Our interest in the norm change stems from the \textit{shrinking target problem} that arises when Dirichlet's theorem is restated in the language of flows on homogeneous spaces (cf.\ \S\ref{dyn}). As we will see, the shapes of the compact targets that arise, depending crucially on the norm, make a big impact on the complexity of the problem. This is in contrast with the shrinking target problem arising from Khintchine's theorem where the targets are non-compact cusp neighborhoods 
in the homogeneous space.
\smallskip
}

Keeping up with the notation in 
\cite{AD}, we define for each norm $\nu$ a critical value \eq{crit}{\Delta_{\nu} :=  \text{ the smallest co-volume over all lattices intersecting $B_{\nu}(1)$ trivially.}
}
 Much is known about these constants and the set of lattices that attain this lower bound. For example, $\Delta_{\nu}$ is computed as the minimal area of a parallelogram with one vertex at the origin and the other three on the boundary of $B_{\nu}(1)$. 

This critical value is used in our generalization of $D_{{\infty}}(\psi)$.
Namely, let us say that $\alpha$ is {\sl $(\psi,\nu)$-Dirichlet}, or    $\alpha\in D_\nu(\psi)$, if  \eq{psinu}{\Lambda_\alpha \cap\left[ {\begin{array}{cc}
   \psi(t) & 0 \\
   0 & t \\
  \end{array} } \right]
  B_\nu\left(\frac{1}{\sqrt{\Delta_{\nu}}}\right) \ne \{0\}\text{ whenever $t > 0$ is large enough}.} 
Note, this definition is consistent with  {what we had} before since the critical value for the supremum norm is $1$. {Furthermore, it is easy to see that the function $\psi_1$ again plays the role of a critical parameter: if $c>1$, then $D_{\nu}\left(c\psi_1 \right) =\R$.}

{We will always assume $\psi$}  to be non-decreasing  {and continuous}. Note  that the case $\nu = \|\cdot\|_\infty$ has an extra feature:  if \equ{psinu} is true for all large enough $t\in\N$, then the same is true for all large enough $t>0$. This makes it possible to reduce the problem to continuous functions $\psi$. This argument does not  apply to the set-up of arbitrary norms $\nu$. 
However,    {for the most part  the scope of our paper will allow us to only deal with the continuous case, see Remark \ref{cont}}.

  {When $\nu(\x) = \|\x\|_p$, the $\ell^p$ norm, we shall denote $B_\nu(r)$ by $B_p(r)$, $D_\nu(\psi)$ by $D_p(\psi)$ and $\Delta_{\nu}$ by $\Delta_p$; the set-up discussed in \equ{dtpsi} corresponds to $p = \infty$.} 
 In the case of  the Euclidean norm, which will be the main topic of this paper, 
 ${D_{2}(\psi)}$ is the set of  $\alpha\in\R$ for which the inequality  
$$\left(\frac{\langle\alpha q \rangle}{\psi(t)}\right)^2 + \left(\frac{q}{t}\right)^2 < \frac{2}{\sqrt{3}} $$
is solvable in $q\in \N$ for all large enough $t$. (Note that $\Delta_2=\sqrt{3}/2$.)
\smallskip

\ignore{Let $X=\SL_2(\R)/\SL_2(\Z)$, the space of unimodular lattices in $\R^2$. For the reduction to dynamics, we proceed from \equ{psinu} to get 
\eq{first-dynamical-restatement}{\alpha\in D_\nu(\psi) \iff b_t\Lambda_{\alpha}\notin {\mathcal K}_\nu\left( \sqrt{t\psi(t)}\right) \text{ for all large enough } t}
where \eq{first-bt}{b_t :=\left[ \begin{array}{cc}
   \sqrt{t/\psi(t)} & 0 \\
   0 & \sqrt{\psi(t)/t} \\
  \end{array}\right] \text{ and } {\mathcal K}_\nu(r):= \left\lbrace \Lambda \in X : \Lambda \cap B_{\nu}\left(r/\sqrt{\Delta_{\nu}}\right) = \lbrace 0 \rbrace \right\rbrace.}
From this it is easy to see that the function $\psi_1(t)=\frac{1}{t}$ plays the role of a critical parameter. If $c>1$, $D_{\nu}\left(c\psi_1 \right) =\R$ and if $c<1$,  $D_{\nu}\left(c\psi_1 \right) $ has measure zero (c.f. proposition \ref{erg}).}

In the paper \cite{AD}, Andersen and Duke obtained several results under the additional assumption that $\nu$ is {\sl strongly symmetric}, that is satisfies
$$
\nu\big((x_1,x_2)\big) = \nu\big((|x_1|,|x_2|)\big) \text{ for all }
(x_1,x_2)\in\R^2
.
$$
In particular, they considered a generalization of the set $\widehat D_\infty$:
\eq{dionedim}{\widehat D_\nu := \bigcup_{c < 1}D_\nu(c\psi_1),}
which they referred to as `the set of numbers for which {Minkowski's approximation theorem can be improved}', 
and proved
\begin{thm}[\cite{AD}, Theorem 1.1] \label{dtad} For any strongly symmetric norm $\nu$ on $\R^2$, the set $\widehat D_\nu$
{\rm (a)}  has Lebesgue measure zero,
and {\rm (b)} is uncountable.
 \end{thm}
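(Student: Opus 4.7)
The plan is to rephrase the condition defining $\widehat D_\nu$ dynamically and then derive (a) from equidistribution of horocycles and (b) from hyperbolic dynamics on $X := \SL_2(\R)/\SL_2(\Z)$. Setting $a_t := \left[\begin{array}{cc} t & 0 \\ 0 & t^{-1} \end{array}\right]$ and $s_\nu(\Lambda) := \min_{v \in \Lambda \setminus 0} \nu(v)$, a short unimodular rescaling of the box in \equ{psinu} shows that $\alpha \in D_\nu(c\psi_1)$ if and only if $s_\nu(a_t \Lambda_\alpha) < \sqrt{c/\Delta_\nu}$ for all sufficiently large $t$. Since \equ{crit} forces $s_\nu(\Lambda) \le 1/\sqrt{\Delta_\nu}$ for every unimodular $\Lambda$, the value $c = 1$ is critical; writing $\widehat D_\nu = \bigcup_{n \geq 1} D_\nu\big((1 - 1/n)\psi_1\big)$ reduces both parts to questions about orbits of the horocycle $\{\Lambda_\alpha\}_{\alpha \in \R}$ under $\{a_t\}$.

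For (a), fix $c < 1$ and let $K_c := \{\Lambda \in X : s_\nu(\Lambda) \ge \sqrt{c/\Delta_\nu}\}$. This is a closed neighborhood of the critical locus $\mathcal{C}_\nu := \{\Lambda : s_\nu(\Lambda) = 1/\sqrt{\Delta_\nu}\}$, and it carries positive Haar measure. By the standard equidistribution theorem for translates of horocycle segments under the diagonal flow (Dani--Margulis, or equivalently Birkhoff applied to the mixing $a_t$-action), for Lebesgue-a.e.\ $\alpha$ the orbit $\{a_t \Lambda_\alpha\}$ visits $K_c$ at unboundedly large times, hence $\alpha \notin D_\nu(c\psi_1)$. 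So each $D_\nu(c\psi_1)$ is Lebesgue-null, and so is the countable union $\widehat D_\nu$.

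For (b) it suffices to exhibit an uncountable family of $\alpha$ whose forward $a_t$-orbit is eventually trapped in a compact subset of the open set $V_\nu := X \setminus \mathcal{C}_\nu$; any such $\alpha$ lies in $\widehat D_\nu$ by the reformulation. Since any strongly symmetric norm is bi-Lipschitz to the supremum norm, $a_t$-bounded orbits in $X$ correspond exactly to $\alpha \in \BA$. Strong symmetry also forces $\mathcal{C}_\nu$ to be a proper, nowhere-dense closed subset of $X$ of positive codimension, with its structure readable from the classical description of critical inscribed parallelograms in $B_\nu(1)$, so one can choose a compact $K \subset X$ with nonempty interior and disjoint from $\mathcal{C}_\nu$. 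A standard hyperbolic-dynamics argument on the modular surface (Cantor-set shadowing of a periodic $a_t$-orbit lying inside $K$, or the Kleinbock--Weiss theorem that the set of $\alpha$ whose $a_t$-orbit avoids a given proper closed subset of $X$ is winning) then produces uncountably many $\alpha$ whose orbit is eventually trapped in $K$.

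The routine equidistribution computation handles (a); the main obstacle is (b), where one must verify that $\mathcal{C}_\nu$ admits a compact $a_t$-invariant hyperbolic subset in its complement. This is precisely where the strong-symmetry hypothesis does work: it constrains the geometry of the critical locus tightly enough that $V_\nu$ is large enough to house a nontrivial compact $a_t$-invariant set.
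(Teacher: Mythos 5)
Your part (a) is essentially the paper's argument and is fine: after the rescaling that identifies $D_\nu(c\psi_1)$ with the set of $\alpha$ whose forward diagonal orbit eventually avoids the compact set $\mathcal{K}_\nu(\sqrt c)$, one uses the fact that this set has positive Haar measure for $c<1$ (it contains a neighborhood of the nonempty critical locus) together with recurrence of the $a_t$-flow, and takes a countable union over $c=1-1/n$. The only point to be careful about is that Birkhoff/ergodicity gives the recurrence for Haar-a.e.\ lattice, whereas you need it for Lebesgue-a.e.\ $\alpha$ on the measure-zero periodic horocycle $\{\Lambda_\alpha\}$; this transfer is exactly the horospherical-decomposition-plus-Fubini step in the proof of Proposition~\ref{erg}, and your parenthetical appeal to equidistribution of horocycle translates is a legitimate substitute. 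This matches the route by which the paper obtains the stronger Theorem~\ref{measurezero}.

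Part (b) has a genuine gap. There is no theorem asserting that, for an \emph{arbitrary} proper closed subset $Z\subset X$, the set of $\alpha$ whose $a_t$-orbit avoids $Z$ is winning: the actual results (\cite{K99}, \cite{AGK}, quoted here as Theorem~\ref{etds}) require $Z$ to be a ${\mathcal C}^1$ submanifold satisfying the $(F,H)$-transversality of Definition~\ref{trans}, i.e.\ transversal to the flow direction \emph{and} to the unstable horospherical direction. Verifying \equ{transversality} for the critical locus $\mathcal{K}_\nu(1)$ is precisely the hard geometric content of the paper (Theorem~\ref{special-theorem-appendix}, proved via Mahler's theory of irreducible convex domains), and it genuinely \emph{fails} for norms whose unit ball is a parallelogram --- a class that contains strongly symmetric examples such as $\|\cdot\|_\infty$ and $\|\cdot\|_1$ --- because there $\mathcal{K}_\nu(1)$ contains entire unstable horocycle orbits; that case requires the separate Davenport--Schmidt input $\BA\subset\widehat D_\infty$. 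Your fallback via a horseshoe in $X\setminus\mathcal{K}_\nu(1)$ does not close the gap either: you must produce uncountably many points of the \emph{specific compact circle} $\{\Lambda_\alpha:\alpha\in\R\}$ lying in the stable set of that horseshoe, and pushing the expanding horocycle through the horseshoe's product boxes and extracting a Cantor set of parameters is essentially the winning-set argument you are trying to avoid. Note also that intersecting with $\BA$ alone does not suffice: a bounded forward orbit can perfectly well accumulate on $\mathcal{K}_\nu(1)$, so ``$a_t$-bounded'' does not imply ``eventually trapped in a compact set disjoint from the critical locus.'' In short, the sentence claiming that strong symmetry ``constrains the geometry of the critical locus tightly enough'' is exactly the missing proof, not a remark.
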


\noindent{In the present paper we would like to take an arbitrary norm $\nu$ on $\R^2$ and} consider the following

\medskip

\noindent{{\bf Questions.}
\begin{itemize}
\item[\rm (i)] Will part (a)  of the above theorem hold in that generality? 
\item[\rm (ii)] 
Will part (b)  hold, and can one strengthen its conclusion 
 by showing that  the set $\widehat D_\nu$
 is 
thick? 
winning? absolute winning?
\item[\rm (iii)] 
Is it true that  $D_\nu(\psi_1) = \R$?
\item[\rm (iv)] It is true that $D_\nu(\psi)^c\ne\varnothing$ 
whenever {$\psi(t)<\psi_1(t)$} for all sufficiently large $t$?
\item[\rm (v)] Perhaps under some  additional condition such as the monotonicity of  the function $t\mapsto t\psi(t)$, can one find a criterion for the Lebesgue measure of $D_\nu(\psi)$ to be zero or full?
\end{itemize}}

\ignore{\begin{ques}\label{precise}What can one say about {the critical function $\psi_1(t)$ itself? i.e.\ is it true that $D_\nu(\psi_1) = \R$?}\end{ques}

\begin{ques}\label{nonempty}It is true that $D_\nu(\psi)^c$ is non-empty whenever {$\psi(t)<\psi_1(t)$} for all sufficiently large $t$? 
\end{ques}

\begin{ques}\label{zerofull} Perhaps under some  additional condition such as the monotonicity of  the function $t\mapsto t\psi(t)$, can one find a criterion for the Lebesgue measure of $D_\nu(\psi)$ to be zero or full?
\end{ques}}

We 
answer Question (i) affirmatively in the very general set-up of systems of $m$ linear forms in $n$ variables, where $m,n\in\N$ are arbitrary (Theorem~\ref{measurezero}). 
With regards to Question (ii),  strengthening Theorem 1.2(b),  we prove
\begin{thm} \label{winning} {For any norm $\nu$ on $\R^2$, the set $\widehat D_\nu$ is absolute winning.}
\end{thm}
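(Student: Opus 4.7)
The plan is to play McMullen's $\beta$-absolute winning game on $\R$ directly. First, recast the problem dynamically: setting $a_s := \operatorname{diag}(e^s, e^{-s})$ and making the change of variable $s = \log(t/\sqrt{c})$ in the definition of $D_\nu(c\psi_1)$, one checks that
$$\widehat D_\nu \;=\; \Big\{\alpha \in \R : \limsup_{s \to \infty} \lambda_1^\nu(a_s \Lambda_\alpha) < \tfrac{1}{\sqrt{\Delta_\nu}}\Big\},$$
where $\lambda_1^\nu(\Lambda)$ denotes the $\nu$-length of a shortest nonzero vector of $\Lambda$. Hence Alice's task is to force the intersection point $\alpha^* := \bigcap_n B_n$ to satisfy $\lambda_1^\nu(a_s \Lambda_{\alpha^*}) < r_*$ for all sufficiently large $s$, for some target $r_* \in (0, 1/\sqrt{\Delta_\nu})$ fixed in advance close to the critical value.

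Fix $\beta \in (0, 1/3)$ and couple the game to the dynamical timescale via $s_n := \log(1/r_n)$, where $r_n$ is the radius of Bob's $n$-th ball $B_n$. A vector $(e^s(q\alpha - p),\, e^{-s}q) \in a_s\Lambda_\alpha$ has $\nu$-length less than $r_*$ only when $|q| \asymp e^s$ and $\alpha$ lies within an interval of radius $\asymp e^{-2s}$ around the rational $p/q$, the precise radius being determined by intersecting $B_\nu(r_*)$ with horizontal lines. Consequently, the $\alpha \in B_n$ that fail the gap condition at some $s$ in the window $[s_n, s_{n+1})$ lie in a neighborhood of finitely many rationals $p/q$ with $|q|$ in the dyadic range $[e^{s_n}, e^{s_{n+1}}]$. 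The combinatorial heart of the proof is a geometry-of-numbers estimate: uniformly in $n$, the number of such \emph{dangerous} rationals in $B_n$ is at most a constant $M = M(\beta, \nu)$, and around each one the dangerous $\alpha$-interval has radius less than $\beta r_n$ provided $r_*$ is taken close enough to $1/\sqrt{\Delta_\nu}$. Granting this, Alice proceeds by scheduling: she interleaves game steps across dyadic scales to delete the dangerous rationals one per step within her $\beta r_n$-ball budget, and the finite count per scale guarantees her schedule closes up, so $\alpha^* \in \widehat D_\nu$.

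The main obstacle is establishing this geometry-of-numbers estimate uniformly in the norm $\nu$. The key object is the compact critical locus $\mathcal C_\nu := \{\Lambda : \lambda_1^\nu(\Lambda) \geq 1/\sqrt{\Delta_\nu}\}$ in the space of unimodular lattices, which Alice must prevent the orbit $\{a_s \Lambda_{\alpha^*}\}$ from accumulating on. For $\nu = \|\cdot\|_\infty$, $\mathcal C_\nu$ has a simple description (essentially the closed $\SL_2(\Z)$-orbit of an explicit one-parameter family of lattices), and the estimate is essentially classical, underpinning McMullen's proof that $\BA$ is absolute winning. For general $\nu$ the corresponding count requires a uniform analysis of the \emph{width} of $\mathcal C_\nu$ in the unstable direction of the $a_s$-flow, exploiting the convexity of $B_\nu$ and the extremality in the definition of $\Delta_\nu$; this is where the methods from geometry of numbers advertised in the abstract enter.
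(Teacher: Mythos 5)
Your dynamical reformulation of $\widehat D_\nu$ in terms of $\limsup_s\lambda_1^\nu(a_s\Lambda_\alpha)$ is correct and is also where the paper starts, but after that the two arguments diverge completely, and yours has a genuine gap. You propose to play McMullen's game directly on $\R$, and the entire weight of the argument rests on the claim that, per dyadic scale, the \emph{dangerous} parameters cluster around at most $M(\beta,\nu)$ rationals in intervals of radius $\leq\beta r_n$ --- but you explicitly defer that estimate (``the main obstacle is establishing this geometry-of-numbers estimate uniformly in the norm $\nu$''), so the proposal is a plan rather than a proof. Worse, the plan as stated does not obviously survive scrutiny for a general norm. The picture ``dangerous $\alpha$ lie near finitely many rationals'' is correct for the Euclidean norm, because there the critical locus is a single $K$-orbit and its projection to $\Hyp$ is the $\Gamma$-orbit of $z_0=-1/2+i\sqrt3/2$, whose real parts happen to be rational (this is exactly the computation in the paper's proof of Theorem~\ref{dtpsieuclcritical}); but for an arbitrary non-parallelogram norm the critical locus is generally a genuine one-dimensional $\mathcal{C}^1$-submanifold of $X$ (cf.\ Corollary~\ref{implicit-function}), not a single $K$-orbit, so the dangerous intervals have no reason to be centered at rationals, and the finiteness count per scale needs an entirely different justification. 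And in the parallelogram case (the sup norm and its $\GL_2$-conjugates) the picture is inverted: the critical locus $\mathcal{K}_\infty(1)$ is noncompact, and the $\alpha$ for which $a_s\Lambda_\alpha$ is admissible at time $s$ form the \emph{complement} of a union of neighborhoods of rationals, which is not a small set that Alice can excise within her $\beta$-budget. Your sketch does not distinguish this case at all, whereas the paper handles it separately.

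The paper's actual route is quite different and, once the machinery is in place, considerably shorter. It invokes Theorem~\ref{etds} (a special case of a result from \cite{AGK}): if $Z\subset X$ is an $(F,H)$-transversal $\mathcal{C}^1$-submanifold, then the set of $h\in H$ whose forward $a_s$-orbit misses $Z$ in the limit is HAW (absolute winning in one dimension). By Proposition~\ref{exceptional}, $\widehat D_\nu$ is exactly such a set with $Z=\mathcal{K}_\nu(1)$. So the whole problem reduces to showing that $\mathcal{K}_\nu(1)$ is contained in a finite union of $(F,H)$-transversal $\mathcal{C}^1$-submanifolds. For norms whose unit ball is not a parallelogram this is Theorem~\ref{special-theorem-appendix}, proved in the Appendix using Mahler's theory of irreducible convex domains (his regularity and uniqueness results for critical lattices) plus an implicit-function-theorem argument and an elementary convexity/slope computation to rule out tangency to the upper-triangular distribution; this is where the geometry of numbers you flag actually enters. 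For parallelogram unit balls, $\mathcal{K}_\nu(1)=g\mathcal{K}_\infty(1)$, which is two orbits of conjugates of $H$; the transversality argument applies unless $gHg^{-1}\subset P$, in which case the orbit collapses to $a_{s_0}H\Z^2$ and the avoidance condition reduces to $\alpha\in\widehat D_\infty\supset\BA$, which is absolute winning by McMullen. Your proposal, even if the combinatorial lemma were supplied, would not produce the second branch of this dichotomy, and it is precisely where a direct game-theoretic argument is known to be delicate (McMullen's $\BA$ proof is itself nontrivial). So: right reformulation, genuinely different method, and the central lemma in your method is both unproved and, as stated, not uniform in $\nu$.
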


We also obtain some partial results in the higher-dimensional case; in particular, a   modification of the absolute winning property, namely {\sl hyperplane absolute winning (HAW)} introduced in \cite{BFKRW},  will be shown to hold for a multi-dimensional analogue of the set $\widehat D_\nu$  where $\nu$ is the Euclidean norm on $\R^{m+n}$ (see Theorem \ref{euclidean}).

\ignore{Suppose that $\nu = \|\cdot\|_p$, where 
$1\le p \le \infty$, is the $\ell^p$ norm on $\R^2$. Then 
the set  $\widehat D_\nu$
is thick.}


\medskip
 For the rest of the questions we restrict our attention to
the Euclidean norm on $\R^2$. Specifically, we prove the following theorems:

\begin{thm} \label{dtpsieuclcritical} {$D_2\big(\psi_1\big) = \R$; in other words, for any $\alpha\in\R$ the inequality $$\langle\alpha q \rangle < \frac{1}{t}\sqrt{\frac{2}{\sqrt 3}-\left(\frac{q}{t}\right)^2} 
$$ 
is solvable for all large enough $t > 0$.}
\end{thm}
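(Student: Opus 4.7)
The plan is to translate the problem into a shortest-vector question and invoke Hermite's theorem. From \equ{psinu}, the inequality in the theorem has a nontrivial integer solution at a given $t>0$ if and only if the unimodular lattice $g_tu_\alpha\Z^2$, with $g_t:=\operatorname{diag}(t,1/t)$, has a nonzero vector in the open Euclidean ball of radius $\sqrt{2/\sqrt{3}}$. Since $2/\sqrt{3}$ is precisely Hermite's constant $\gamma_2$ in dimension two, Hermite's theorem says that every unimodular lattice $\Lambda\subset\R^2$ satisfies $\lambda_1(\Lambda)^2\le 2/\sqrt{3}$, with equality if and only if $\Lambda$ lies in the $\SO(2)$-orbit of the hexagonal lattice; I will call this orbit $H$. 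Thus the theorem reduces to the claim that, for every $\alpha\in\R$, the lattice $g_tu_\alpha\Z^2$ avoids $H$ for all sufficiently large $t$.

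To analyze this, I would pass to the upper half-plane model via the standard identification of $\SO(2)$-orbits of unimodular lattices with points of $\Hyp/\SL_2(\Z)$. Under this identification $g_tu_\alpha\Z^2$ corresponds to $\tau(t):=\alpha+i/t^2$, while $H$ collapses to the single order-$3$ elliptic point represented by $\tau_3:=e^{i\pi/3}=(1+i\sqrt{3})/2$, so that $g_tu_\alpha\Z^2\in H$ exactly when $\tau(t)\in\SL_2(\Z)\cdot\tau_3$. For any $\gamma=\left(\begin{smallmatrix}a & b\\ c & d\end{smallmatrix}\right)\in\SL_2(\Z)$, a direct rationalization of $\gamma\tau_3=(a\tau_3+b)/(c\tau_3+d)$ yields
\[
\gamma\tau_3 \;=\; \frac{2ac+ad+bc+2bd}{2(c^2+cd+d^2)} \;+\; i\,\frac{\sqrt{3}/2}{c^2+cd+d^2},
\]
where the determinant relation $ad-bc=1$ collapses the $\sqrt{3}$ contribution entirely into the imaginary component. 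In particular $\RP(\gamma\tau_3)\in\Q$ for every $\gamma\in\SL_2(\Z)$.

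The theorem now splits into two cases. If $\alpha\notin\Q$, then $\RP(\tau(t))=\alpha$ is irrational, so $\tau(t)\neq\gamma\tau_3$ for any $\gamma\in\SL_2(\Z)$; consequently $g_tu_\alpha\Z^2\notin H$ for \emph{every} $t>0$, and Hermite's bound is strict, producing the required nonzero integer solution at every $t>0$. If instead $\alpha=p/q$ is rational in lowest terms, the vector $(\alpha q-p,q)=(0,q)\in\Lambda_\alpha$ serves as a trivial witness, since the ellipse inequality then reduces to $q^2/t^2<2/\sqrt{3}$, which holds for all $t>q\,(\sqrt{3}/2)^{1/2}$. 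In both cases $\alpha\in D_2(\psi_1)$. The only substantive step is the one-line computation of $\RP(\gamma\tau_3)$; it is precisely what allows us to exclude Hermite's borderline hexagonal case for irrational $\alpha$ without invoking any quantitative information about the $g_t$-flow.
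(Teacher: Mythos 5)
Your proof is correct and follows essentially the same route as the paper: reduce to the statement that $g_tu_\alpha\Z^2$ avoids the rotated hexagonal critical locus, pass to the upper half-plane, and observe that every point of the $\SL_2(\Z)$-orbit of the order-$3$ elliptic point has rational real part, which disposes of irrational $\alpha$. The only (immaterial) difference is in the rational case, where you exhibit the explicit witness $(0,q)$ while the paper instead notes that for $\alpha\in\Q$ the trajectory $a_s\Lambda_\alpha$ diverges to the cusp and so cannot meet the compact critical locus for an unbounded set of times.
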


{\begin{thm} \label{dtpsieucl1} 
 Let $\psi$ be a non-increasing continuous function
such that \eq{smaller}{\psi(t)< \psi_1(t)=\frac{1}{t}
\quad\text{for all sufficiently large }t.} 
Then $D_2(\psi)^c \neq \varnothing$. \end{thm}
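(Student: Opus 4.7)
The plan is to translate the statement into a shrinking-target problem on the homogeneous space $X = \SL_2(\R)/\SL_2(\Z)$ and then produce $\alpha\in D_2(\psi)^c$ via an expanding-horocycle Baire-category argument. For $t>0$ set
\[
b_t := \left[\begin{array}{cc} \sqrt{t/\psi(t)} & 0 \\ 0 & \sqrt{\psi(t)/t} \end{array}\right]\in\SL_2(\R),\qquad r_t := \sqrt{t\psi(t)},
\]
and, for $r\in(0,1]$,
\[
\mathcal{K}(r) := \left\{\Lambda \in X : \Lambda\cap B_2\left(r/\sqrt{\Delta_2}\right) = \{0\}\right\}.
\]
Pushing forward the ellipse condition \eqref{eq:psinu} by $b_t$ one verifies that $\alpha\in D_2(\psi)^c$ if and only if $b_t\Lambda_\alpha\in\mathcal{K}(r_t)$ for an unbounded set of $t$. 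The hypothesis \eqref{eq:smaller} gives $r_t<1$ for all large $t$, so the target is a compact neighborhood of the closed one-dimensional \emph{critical locus}
\[
\mathcal{L}_2 := \mathcal{K}(1)\subset X
\]
consisting of the (rotated) hexagonal lattices, and $\operatorname{int}\mathcal{K}(r)\ne\varnothing$ for every $r<1$.

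For each $N\in\N$ define the open set
\[
A_N := \bigl\{\alpha\in\R : b_t\Lambda_\alpha \in \operatorname{int}\mathcal{K}(r_t) \text{ for some } t\ge N\bigr\}.
\]
Since $\bigcap_{N\ge 1}A_N\subseteq D_2(\psi)^c$, it suffices to prove each $A_N$ is dense in $\R$; Baire's theorem then furnishes a dense $G_\delta$ subset of $D_2(\psi)^c$. To establish density, fix a non-empty open interval $I\subseteq\R$ and $N>0$, and observe that $\{b_tu_\alpha\Z^2 : \alpha\in I\}$ is an arc of the expanding horocycle through $b_t\Z^2$ of length of order $|I|\cdot t/\psi(t)$, which tends to $\infty$. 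By the Dani--Smillie equidistribution theorem for expanding translates of horocycles, this arc equidistributes in $X$ as $t\to\infty$; in particular, for any fixed non-empty open $U\subseteq X$ it enters $U$ for every sufficiently large $t$. If $r_t$ is bounded away from $1$, one may take $U=\operatorname{int}\mathcal{K}(r)$ for any fixed $r\in(\limsup r_t,1)$ and conclude $I\cap A_N\ne\varnothing$.

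The substantive obstacle lies in the regime $r_t\to 1$, where the target $\mathcal{K}(r_t)$ itself shrinks onto the null set $\mathcal{L}_2$ and qualitative equidistribution no longer suffices. I would treat this either via a quantitative (effective) shrinking-target estimate for the horocycle flow, in the spirit of Maucourant's dynamical Borel--Cantelli lemma mentioned in the abstract, or more elementarily by a Cantor-type nested-interval construction: choose times $t_n\to\infty$ adaptively so that at each stage the length $|I_{n-1}|\cdot t_n/\psi(t_n)$ of the $n$-th expanding arc dominates the characteristic transverse scale of $\operatorname{int}\mathcal{K}(r_{t_n})$ (controlled by the gap $1-r_{t_n}>0$ from the critical value), and extract a closed subinterval $I_n\subseteq I_{n-1}$ on which $b_{t_n}\Lambda_\alpha\in\operatorname{int}\mathcal{K}(r_{t_n})$. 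Compactness forces $\bigcap_n I_n\ne\varnothing$, and any $\alpha$ in the intersection belongs to $D_2(\psi)^c$.
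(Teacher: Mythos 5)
Your reduction to the shrinking-target statement and your Baire-category framework are both sound, and you have correctly isolated where the difficulty lies: the regime $r_t\to 1$, where the targets $\mathcal{K}(r_t)$ collapse onto the critical locus. But neither of your two proposed fixes closes this gap. Effective equidistribution of expanding horocycles comes with at best polynomial error rates in $t$, whereas the hypothesis $\psi(t)<1/t$ puts no lower bound on $1-r_t$: for $\psi(t)=1/t-e^{-e^t}$ the target at time $t$ contains only an $e^{-e^t}$-neighborhood of the critical locus, far below what any effective rate can detect. Your second, ``elementary'' fix rests on the heuristic that an arc whose length dominates the transverse scale of an open set must meet that set; this is false --- a long horocycle arc in the $3$-dimensional space $X$ can entirely avoid a small neighborhood of a fixed one-dimensional set, and nothing in your construction forces the arc to approach the critical locus at the adaptively chosen times $t_n$.

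The paper's proof replaces equidistribution by an exact hitting argument. Projecting to the modular surface (Proposition \ref{danicorr2d}), $\alpha\in D_2(\psi)^c$ iff the vertical geodesic $s\mapsto -\alpha+e^{-2s}i$ lies in $\pi\big(\widetilde{\mathcal K}(r(s))\big)$ for an unbounded set of $s$; and for every $s$ with $r(s)<1$ this target is an open set containing the entire $\Gamma$-orbit of the critical point $z_0=-\tfrac12+i\tfrac{\sqrt3}{2}$ in its interior. Since the real parts of $\Gamma z_0$ are dense in $\R$ and one can choose orbit points with arbitrarily small imaginary part, one builds nested intervals $A_k$ around $\RP(\gamma_k z_0)$ so that at the exact time $s_k$ with $e^{-2s_k}=\im(\gamma_k z_0)$ the geodesic sits inside a rectangle $U_k\subset\pi\big(\widetilde{\mathcal K}(r(s_k))\big)$ containing $\gamma_k z_0$; any $\alpha$ with $-\alpha\in\bigcap_k A_k$ then lies in $D_2(\psi)^c$. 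This works no matter how fast $1-r(s)\to 0$, because the geodesic is steered to pass through a prescribed neighborhood of a point that is always in the target. That observation --- the shrinking targets all contain a fixed, countable, dense (in the relevant sense) set, namely $\Gamma z_0$ --- is the missing idea; with it your Baire argument for the density of the sets $A_N$ would also go through, but without it the proposal does not constitute a proof.
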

\begin{thm} \label{dtpsieucl2}  Let $\psi$ be  as in Theorem \ref{dtpsieucl1}, and assume, {in addition,} that
\eq{monotone}{\text{the 
function $t\mapsto t\psi(t)$ is non-decreasing.}} Then  the Lebesgue measure of $D_2(\psi)$ (resp.\  of $D_2(\psi)^c$) is zero whenever \eq{conditioneucl}{
{\sum_k \big(\psi_1(k)-\psi(k)\big) }
\ignore{= \sum_k\left(\frac{1}{k} - \psi(k)\right)}
= \infty \hspace{3mm}(\text{resp.} <\infty).}
\end{thm}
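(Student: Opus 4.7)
The plan is to translate the condition $\alpha\in D_2(\psi)^c$ into a shrinking-target statement for the geodesic flow on the modular surface and then invoke Maucourant's dynamical Borel--Cantelli lemma \cite{mau}.  Setting $a_s:=\operatorname{diag}(e^s,e^{-s})\in\SL_2(\R)$ and $s:=\tfrac12\log(t/\psi(t))$, $g_t$-normalization sends the box of \equ{psinu} to the Euclidean ball of radius $\rho(s)/\sqrt{\Delta_2}$, where $\rho(s):=\sqrt{t\psi(t)}$; thus $\alpha\in D_2(\psi)^c$ is equivalent to the existence of an unbounded set of $s$ with
$$a_s u_\alpha\mathbb Z^2\in\mathcal K_2(\rho(s)):=\bigl\{\Lambda\in\SL_2(\R)/\SL_2(\Z):\Lambda\cap B_2(\rho(s)/\sqrt{\Delta_2})=\{0\}\bigr\}.$$
Under the hypotheses of the theorem $\rho(s)\nearrow 1$ and $1-\rho(s)\asymp\eta(t(s))$, with $\eta(t):=1-t\psi(t)$ and $t(s)\asymp e^s$ up to a bounded factor.

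Since the Euclidean systole is left-$SO(2)$-invariant, $\mathcal K_2(\rho)$ descends to a neighborhood $\bar{\mathcal K}_2(\rho)$ of the hexagonal orbifold point $y_0$ in the modular surface $\mathcal M:=\SL_2(\Z)\backslash\mathbb H^2$, while the orbit $a_s u_\alpha\mathbb Z^2$ projects to the vertical geodesic $\tau_s=\alpha+ie^{-2s}$ on $\mathcal M$.  A local analysis --- expanding the three length inequalities $\|g v_i\|\ge\rho\|v_i\|$ for the three $SO(2)$-orbits of shortest vectors of the hexagonal lattice, or equivalently using the formula $\operatorname{sys}(\Lambda_\tau)^2=1/\operatorname{Im}\tau$ in the standard fundamental domain --- shows that $\bar{\mathcal K}_2(\rho)$ is comparable (in hyperbolic metric) to a ball around $y_0$ of radius $\asymp 1-\rho\asymp\eta(t)$ as $\rho\to 1^-$.

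Maucourant's lemma applied to the geodesic flow on the finite-volume hyperbolic surface $\mathcal M$ with these shrinking balls then yields the zero-one law: for Haar-a.e.\ starting point, the geodesic enters $\bar{\mathcal K}_2(\rho(s))$ for an unbounded set of $s$ if and only if $\int_0^\infty(1-\rho(s))\,ds=\infty$.  By the monotonicity assumption \equ{monotone}, integral comparison with the substitution $t=e^s$ gives
$$\int_0^\infty(1-\rho(s))\,ds\;\asymp\;\int_1^\infty\frac{\eta(t)}{t}\,dt\;\asymp\;\sum_k\bigl(\psi_1(k)-\psi(k)\bigr),$$
so \equ{conditioneucl} decides the Lebesgue measure of $D_2(\psi)$ via the corresponding (convergence or divergence) direction of Maucourant's lemma.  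The passage from the Haar-a.e.\ statement on $\mathcal M$ to a Lebesgue-a.e.\ statement in $\alpha$ is a standard Fubini argument applied to the horocycle/geodesic decomposition of Haar measure on $\SL_2(\R)/\SL_2(\Z)$, using the equidistribution of the unstable horocycle $\{u_\alpha\mathbb Z^2\}$ to supply the near-independence needed in the divergence direction.

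The main obstacle is the geometric step controlling $\bar{\mathcal K}_2(\rho)$ near the orbifold singularity $y_0$: the Euclidean systole is only continuous (not smooth) at $y_0$, being the minimum of three smooth length functions that coincide there, so one must carefully verify both that the shrinking region is ball-like in the sense demanded by Maucourant's lemma and that the correct exponent appears in the Borel--Cantelli integrand $\int(1-\rho(s))\,ds$ (rather than, for instance, the naive ball-volume exponent $\int(1-\rho(s))^2\,ds$).  A minor secondary point, easily handled under the continuity hypothesis on $\psi$, is passing between continuous $t$ in \equ{psinu} and summation at integer $k$.
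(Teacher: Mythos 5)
Your overall strategy is exactly the paper's: the Dani correspondence turning $\alpha\in D_2(\psi)^c$ into a shrinking-target statement for the geodesic flow, the two-sided comparison of $\pi\big(\mathcal K(1-\e)\big)$ with hyperbolic balls of radius $\asymp\e$ about $\eta(z_0)$, Maucourant's dynamical Borel--Cantelli lemma, and the computation $\int\big(1-r(s)\big)\,ds\asymp\sum_k\big(\psi_1(k)-\psi(k)\big)$. Two of your steps, however, are not actually delivered by the tools you name. A minor one first: Maucourant's theorem is stated for hyperbolic manifolds, while your target is centered at the order-three cone point of the modular orbifold; the fix (as in the paper) is to lift to a torsion-free finite-index subgroup such as $\Gamma(2)$, where the preimage of the shrinking ball is a union of two genuine balls of equal radius, and then observe that the branched covering map sends null sets to null sets.

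The genuine gap is the passage from the Haar-a.e.\ statement on $X$ to the Lebesgue-a.e.\ statement in $\alpha$. This is not ``a standard Fubini argument using equidistribution of the unstable horocycle.'' The horocycle $\{u_\alpha\Z^2\}$ is a null set in $X$, so Fubini in the coordinates $g=w_ya_su_\alpha$ only helps if membership in the limsup set $T(r)=\{\Lambda: a_s\Lambda\in\mathcal K(r(s))\text{ for an unbounded set of }s\}$ is insensitive to the $w_y$ and $a_s$ coordinates --- and $T(r)$ is not invariant under these directions; moreover, equidistribution of long horocycles gives no almost-everywhere information along a fixed horocycle for a limsup set. What is actually needed (and is the content of the paper's \S\ref{transition}) is an almost-invariance statement: the estimate $\delta(a_sw_y\Lambda)\le(1+|y|e^{-2s})\,\delta(a_s\Lambda)$ and its reverse show that $w_ya_s$ with $|y|,|s|<\e$ maps $T(r)$ into $T(h)$ for a slightly perturbed radius function $h$ whose integral $\int(1-h)$ has the same convergence behaviour. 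Even this requires care in the divergence direction, since the naive perturbation $r(\cdot)\big(1+\e e^{-2(\cdot)}\big)$ need not be monotone or bounded by $1$; the paper instead works with $\tfrac{1+\tau_\e r}{2}$ and excises the null set $T\big(1-\e e^{-2(\cdot)}\big)$ before applying the perturbation estimates. Without some version of this argument your proof does not close.
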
}

\noindent Note the difference between \equ{conditioneucl} and \equ{condition}: the latter can be written as 
$$
{\sum_k\big(\ignore{r_\infty^2}\psi_1(k) - \psi(k)\big)}
\log\left(\frac1{k\big(\ignore{r_\infty^2}\psi_1(k) - \psi(k)\big)}\right)
= \infty \hspace{3mm}(\text{resp.} <\infty);$$
that is, compared with  \equ{conditioneucl}, has an extra logarithmic term. 
We have the following 
examples\footnote{These functions $\psi$ are  only decreasing for large enough values of $t$ --  but clearly only the eventual behavior of $\psi$ is relevant to the problem.} demonstrating condition \equ{conditioneucl}:
\begin{itemize}
    \item if $\psi(t)= \frac{1}{t}-\frac{1}{t^{k+1}}$, then $D_2(\psi)$ has full measure when $k>0$;
    \item if $\psi(t)= \frac{1}{t}-\frac{1}{t(\log t)^k}$, then $D_2(\psi)$ is null for $k\leq 1$ and conull for $k>1$.
\end{itemize}

\begin{rem}\label{cont} {\rm One can notice that condition \equ{monotone} of Theorem \ref{dtpsieucl2}, together with the assumption that $\psi$ is non-increasing, forces $\psi$ to be continuous. On the other hand, 
Theorem \ref{dtpsieucl1} would clearly hold for discontinuous functions as long as \equ{smaller} is replaced by
$$\inf_{t_0<t<t_1} \left(\ignore{r_2^2}\psi_1(t)-\psi(t)\right) >0\quad\text{for all sufficiently large }t_0\text{ and all }t_1>t_0.$$}
\end{rem}

\smallskip
This article is structured as follows. In \S\ref{dyn}  we generalize the   problems described above to the set-up of systems of $m$ linear forms in $n$ variables, and describe the connection with
  diagonal flows on the space of lattices. 
In this generality, i.e.\ for arbitrary $m$ and $n$, in \S\ref{ergodicity} we address Questions (i) and (ii) from the above list. 
The first one is answered for an arbitrary norm  in Theorem~\ref{measurezero}. 
For the second one a sufficient condition for the 
HAW property of the higher-dimensional analogue of the set \equ{dionedim} is deduced from a recent work by An, Guan and the first-named author \cite{AGK}. We use that condition to answer  Question (ii) 
for the Euclidean norm on $\R^{m+n}$ and for arbitrary norm on $\R^2$.
  {Theorems \ref{dtpsieuclcritical} and 
  \ref{dtpsieucl1}  are proved in \S\ref{h2} by a geometric argument dealing with geodesics in the upper-half plane. In \S\ref{transition} we show how to deduce Theorem \ref{dtpsieucl2} from a corresponding dynamical zero-one law 
 for geodesic flows on finite volume hyperbolic surfaces due to Maucourant \cite{mau}, which is discussed in detail in  \S\ref{sectionmau}.}

\subsection*{Acknowledgements}
The authors would like to thank Jinpeng An, Nikolay Moshchevitin, Lam Pham, Srini Sathiamurthy, Nick Wadleigh and Shucheng Yu  for helpful discussions, and the anonymous reviewers for a careful reading of the paper which has led to multiple improvements.

\section{Systems of linear forms and reduction to dynamics}\label{dyn}


In this section we generalize the notion of $(\psi,\nu)$-Dirichlet real numbers to the set-up of systems of linear forms.  
Fix positive integers $m,n$, put $d = m+n$, and denote by $\mr$ 
the space of $m\times n$ matrices with real entries, interpreted as systems of $m$ linear forms in $n$ variables, $\x\mapsto A\x$.
Let $\nu$ be an arbitrary norm on $\R^{d}$, and
let $\psi$ be a non-negative function defined on an interval $[t_0,\infty)$ for some $t_0 \ge 1$. 
Generalizing  \equ{psinu}, let us say that  $\mr$\ is {\sl $(\psi,\nu)$-Dirichlet}, 
and write $A\in {D}_\nu(\psi)$, 
if 
for every sufficiently
large $t>0$  one can find
$\vq  \in \Z^n\nz$ and $\vp \in \Z^m$
such that the vector $\left( {\begin{array}{cc}
    A\vq - \vp \\
    \vq \\
  \end{array} } \right)$ is inside the ``generalized ellipsoid'' $\left[ {\begin{array}{cc}
   \psi(t)I_m & 0 \\
   0 & tI_n \\
  \end{array} } \right]
  B_\nu\left(\frac{1}{{\Delta_{\nu}}^{{1/d}}}\right)$, where  
  $\Delta_{\nu}$ is 
  as in \equ{crit}.
  {Here} $I_k$ stands for the $k\times k$ identity matrix, and,  as before, we use notation $$B_\nu(r):= \{\x\in\R^{d}: \nu(\x) < r\}.$$

When $\nu = \|\cdot\|_\infty$ is the supremum norm on $\R^{d}$, we recover the standard set-up of uniform simultaneous \da: indeed,  in that case  the condition  $A\in {D}_\nu(\psi)$ is equivalent to the  system\footnote{This definition differs slightly from the one used in \cite[\S4]{kw1} and \cite[Definition 2.2]{CGGMS}, where  the system 
$\begin{cases}
\|A\vq - \vp \|_\infty^m 
&< \psi(t)\\
\qquad\ \ \|\vq  \|_\infty^n 
&< \ t\end{cases}$
was used in place of    \equ{system}.}  \eq{system}{\begin{cases}
\|A\vq - \vp \|_\infty 
&< \psi(t)\\
\qquad\ \ \|\vq  \|_\infty 
&< \ t\end{cases}}
having a nonzero solution $(\vp,\vq)$ for all large enough $t$.


Let us now restate the $(\psi,\nu)$-Dirichlet property in the language of dynamics on the space $X = \SL_{d}(\R)/\SL_{d}(\Z)$ of unimodular lattices in $\R^{d}$. Define
 $$
 u_A := \left[ {\begin{array}{cc}
   I_m & A \\
   0 & I_n \\
  \end{array} } \right]
  $$
  and
  $$
  \Lambda_A := \left\{\left( {\begin{array}{cc}
    A\vq - \vp \\
    \vq \\
  \end{array} } \right): \vp\in\Z^m,\ \vq\in\Z^n\right\} = 
u_A\Z^{d}\in X;
  $$
 then $A\in {D}_\nu(\psi)$ if and only
   if
  \eq{psinumplusn}{\Lambda_A \cap\left[ {\begin{array}{cc}
   \psi(t)I_m & 0 \\
   0 & tI_n \\
  \end{array} } \right]
  B_\nu\left(\frac{1}{{\Delta_{\nu}}^{1/d}}\right) \ne \{0\}
  } 
whenever $t > 0$ is large enough. 
Note that the determinant of $\left[ {\begin{array}{cc}
   \psi(t)I_m & 0 \\
   0 & tI_n \\
  \end{array} } \right]$ is equal to $\psi(t)^mt^n$; thus, to reduce the problem to 
  {the $\SL_{d}(\R)$-action} on $X$, one can 
  introduce the matrix
  \eq{btmn}{b_t :=\left[ {\begin{array}{cc}
  \left(\frac t{\psi(t)}\right)^{n/{d}} I_m & 0 \\
   0 &  \left(\frac {\psi(t)}t\right)^{m/{d}} I_n\\
  \end{array} } \right]\in\SL_{d}(\R).
  }
 Then \equ{psinumplusn} becomes equivalent to 
$$ {b_t\Lambda_A\notin {\mathcal K}_\nu\left( t^{n/{d}}\psi(t)^{m/{d}}\,\right),}$$ 
where 
$${{\mathcal K}_\nu(r) {:=} \left\lbrace \Lambda \in X : \Lambda \cap B_{\nu}\left(\frac{{r}}{{\Delta_{\nu}}^{1/d}}\right) = \{0\}\right\rbrace. }$$
Note that for any norm $\nu$, any $r > 0$ and in any dimension the  sets ${\mathcal K}_\nu(r)$ are compact in view of Mahler's Compactness Criterion \cite{ma0}. 

The use of $b_t$ as in  \equ{btmn}
has two obvious disadvantages: it is not a group parametrization, and its definition depends on the choice of the function $\psi$. It is much more natural to use a group parametrization: 
\eq{defas}{F :=\{a_s : s\in\R\},\quad\text{where }a_s: = \left[ {\begin{array}{cc}
   e^{s/m}I_m & 0 \\
   0 & e^{-s/n}I_n \\
  \end{array} } \right]. 
  }
 This can be achieved by the change of variables
\eq{reparam}{s:= \frac{mn}{d}\ln\left(\frac t{\psi(t)}\right),} 
and {then, using the monotonicity and continuity\footnote{{The continuity of $\psi$} is needed to uniquely define $r$ in terms of $\psi$ via \equ{reparam}. As was noted in the introduction, $\psi$ can  be assumed to be continuous in the supremum norm case. To deal with arbitrary norms, specifically the Euclidean norm,  the  continuity assumption needs to be added. As mentioned in Remark \ref{cont}, the scope of Theorems \ref{dtpsieucl1} and \ref{dtpsieucl2} allows one to do this without loss of generality.}} of $\psi$, to 
  define a function $r:[s_0,\infty)\to \R_+$, where $s_0 := \frac{mn}{d}\ln\left(\frac{t_0}{\psi(t_0)}\right)$, by the equation \eq{dani}{r \left(\tfrac{mn}{d}\ln\big(\tfrac t{\psi(t)}\big)\right) = t^{n/{d}}\psi(t)^{m/d}. }
The passage from $\psi$ to $r$ and back is usually referred to as the {\sl Dani correspondence}. (See \cite[Proposition 4.5]{kw1} where it is written down for the supremum norm.) We have arrived at the following 

\begin{prop}\label{danicorr} Let $\nu$ be an arbitrary norm on $\R^{d}$,  let $\psi$ be a non-increasing continuous function, and let $r(\cdot)$ be the unique function related to $\psi$ via \equ{dani}. Then ${A}\in D_\nu(\psi)$  if and only if 
 \eq{psinudani}{a_s\Lambda_A\notin {\mathcal K}_\nu\big(r(s)\big) \text{ whenever $s$ is large enough}.}
\end{prop}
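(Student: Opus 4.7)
My plan is essentially to unwind the definition of $D_\nu(\psi)$ recorded in \equ{psinumplusn} and match it to the group flow $\{a_s\}_{s\in\R}$. The core calculation is the identity
\[
\operatorname{diag}\!\big(\psi(t)I_m,\,tI_n\big)^{-1} \;=\; \frac{1}{t^{n/d}\psi(t)^{m/d}}\, a_{s(t)},\qquad s(t):=\tfrac{mn}{d}\ln\tfrac{t}{\psi(t)},
\]
which is verified block by block: $e^{s(t)/m}=(t/\psi(t))^{n/d}$ and $e^{-s(t)/n}=(\psi(t)/t)^{m/d}$, and multiplying the corresponding entries of $a_{s(t)}$ by $t^{-n/d}\psi(t)^{-m/d}$ produces $\psi(t)^{-1}$ and $t^{-1}$ respectively.

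Using this identity, a nonzero $v\in\Lambda_A$ lies in the ellipsoid $\operatorname{diag}(\psi(t)I_m,tI_n)\,B_\nu(\Delta_\nu^{-1/d})$ if and only if $\nu(a_{s(t)}v)<t^{n/d}\psi(t)^{m/d}\,\Delta_\nu^{-1/d}$. Since \equ{dani} sets $r(s(t))=t^{n/d}\psi(t)^{m/d}$ by construction, this is exactly the condition $a_{s(t)}v\in B_\nu\!\big(r(s(t))/\Delta_\nu^{1/d}\big)$. Consequently, \equ{psinumplusn} holding at parameter $t$ is equivalent to $a_{s(t)}\Lambda_A\notin\mathcal K_\nu\!\big(r(s(t))\big)$.

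What remains is to convert ``for all sufficiently large $t$'' into ``for all sufficiently large $s$''. I expect this to be the only mildly delicate point. For it I would verify that $t\mapsto s(t)$ is a strictly increasing homeomorphism of $[t_0,\infty)$ onto $[s_0,\infty)$: strict monotonicity of $t\mapsto t/\psi(t)$ follows from positivity and non-increasingness of $\psi$; continuity of $s(\cdot)$ follows from continuity of $\psi$; and properness ($s(t)\to\infty$) is automatic because $\psi$, being non-increasing, is bounded above by $\psi(t_0)$, so that $t/\psi(t)\to\infty$. This is precisely where the continuity and monotonicity hypotheses, and Remark~\ref{cont}, enter the argument; once in place, the homeomorphism carries the tail quantifier verbatim and finishes the equivalence.
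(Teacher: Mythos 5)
Your proposal is correct and follows essentially the same route as the paper: the identity $a_{s(t)}=b_t$ (the paper's matrix \equ{btmn}), the rescaling of the $\nu$-ball by $t^{n/d}\psi(t)^{m/d}$, and the observation that $t\mapsto s(t)$ is a strictly increasing proper homeomorphism (which is exactly what the paper's footnote on continuity of $\psi$ is invoking to make $r$ well defined). Nothing is missing.
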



Observe that when $\psi(t) = c\psi_{n/m}(t) = c t^{-  n/m}$,  one has  
$ t^{n/{d}}\psi(t)^{m/{d}}  \equiv c^{m/{d}}$;
in other words, under the Dani correspondence  \eq{corrconst}{\text{$\psi = c\psi_{n/m}$ corresponds to the constant function }r(s) \equiv c^{m/{d}}.}
By definition of $\Delta_{\nu}$, ${\mathcal K}_\nu(r) = \varnothing$ for ${r} > 1$, which immediately implies that  \eq{generaldt}{D_\nu(c\psi_{n/m}) = M_{m,n}\quad\text{whenever }c > 1.}
{Note that} when $\nu = \|\cdot\|_\infty$, the critical {value} $\Delta_\infty$ is equal to $1$ in any dimension, and  \equ{generaldt} corresponds to the classical Dirichlet's Theorem for simultaneous approximation.

{Note also that} when $r<1$, ${\mathcal K}_\nu(r)$ is {a} non-empty, compact set containing an open neighborhood of 
$${\mathcal K}_\nu(1) = \bigcap_{r<1}{\mathcal K}_\nu(r).$$
The latter set, called the {\sl critical locus} corresponding to the norm $\nu$,  plays an important role for the problems we are considering; elements of this set are called {\sl critical lattices}. 

Another way of defining the set ${\mathcal K}_\nu(1)$ is through  the following function on $X$:
\eq{delta}{\delta_\nu({\Lambda}):={\Delta_\nu}^{1/{d}} \inf\limits_{\x\in {\Lambda} \nz } \nu(\x),}
that is,  $\delta_\nu({\Lambda})$ is the suitably normalized length of a shortest nonzero vector of $\Lambda$.
Clearly $\delta_\nu$ is continuous, and we have the equality ${\mathcal K_\nu}(r)=\delta_\nu^{-1}\big([r,1]\big)$; in particular, the critical locus ${\mathcal K}_\nu(1) = \delta_\nu^{-1}(1)$  
consists of all lattices maximizing $\delta_\nu$, the value of the maximum being equal to $1$ due to our normalization.

{When $\nu = {\|\cdot\|_\infty}$ is the supremum norm, the structure of its critical locus ${\mathcal K_\infty}(1)$ is described by the Haj\'os-Minkowski Theorem (see \cite[\S IX.1.3]{ca} {and also \cite[\S 3.3]{F} for details of the proof}).
In particular for $d = 2$ one has
\begin{equation}\label{critlocus} {\mathcal K_\infty}(1) = 
    \left\lbrace \left[{\begin{array}{cc}
   1 & \alpha \\
   0 & 1 \\
  \end{array}}\right]  \Z^2: \alpha\in \R\right\rbrace \bigcup  \left\lbrace \left[{\begin{array}{cc}
   1 & 0 \\
   \alpha & 1 \\
  \end{array}}\right] \Z^2: \alpha\in \R\right\rbrace.
\end{equation}

Something can also be said for the case of the Euclidean norm on $\R^d$ for arbitrary $d$ (see Theorem \ref{euclidean}  below).
In general, however, each norm comes with its own peculiarities, with difficulty increasing with dimension.
In two dimensions an extensive theoretical study of critical loci appears in the papers \cite{ma1,ma2} of Mahler. 
{For example, when $d=2$ and $\nu$ a norm with polygonal unit ball or an $\ell^p$ norm,  the critical locus $\mathcal{K}_{\nu}(1)$  is finite    \cite{ma3, Glazunov-Golovanov-Malyshev}.
See also \cite{KRS} for examples of critical loci of more complicated nature, e.g.\ of fractional Hausdorff dimension.}
In higher dimensions one can find in \cite[Chapter V]{ca} many useful necessary conditions for a lattice to be critical.}


Using  \equ{corrconst} and Proposition \ref{danicorr} we can immediately derive a dynamical description for the higher-dimensional analogue of the set \equ{dionedim}, that is, the set 
 $${\widehat D_\nu := \bigcup_{c < 1}
D_\nu(c\psi_{n/m})}
$$
of {\sl $\nu$-Dirichlet-improvable} systems of linear forms  \amr:
\begin{prop}\label{exceptional} 
$A\in \widehat D_\nu$ if and only if there exists an open neighborhood $U$ of ${\mathcal K}_\nu(1)$ 
such that $a_{s}\Lambda_A\notin U$ for large enough $s$.
\end{prop}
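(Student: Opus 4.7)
The plan is to view this proposition as a straightforward translation of the definition $\widehat D_\nu = \bigcup_{c<1} D_\nu(c\psi_{n/m})$ through the Dani correspondence (Proposition~\ref{danicorr}) together with observation~\equ{corrconst}, which says that $\psi = c\psi_{n/m}$ corresponds to the constant function $r(s)\equiv c^{m/d}$. The only ingredient beyond bookkeeping is the compactness of the sets $\mathcal{K}_\nu(r)$ for $r>0$, which is Mahler's criterion and is already noted in the text.

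For the forward direction, I would pick $c<1$ witnessing $A\in D_\nu(c\psi_{n/m})$. By Proposition~\ref{danicorr} and \equ{corrconst}, this means $a_s\Lambda_A \notin \mathcal{K}_\nu(c^{m/d})$ for all sufficiently large $s$. Setting $c':=c^{m/d}\in(0,1)$, I would then exhibit an explicit neighborhood of the critical locus via the function $\delta_\nu$ from \equ{delta}: since $\delta_\nu$ is continuous and $\mathcal{K}_\nu(r)=\delta_\nu^{-1}([r,1])$, the set
\[
U := \delta_\nu^{-1}\bigl((c',1]\bigr)
\]
is open, contains $\mathcal{K}_\nu(1)=\delta_\nu^{-1}(\{1\})$, and lies inside $\mathcal{K}_\nu(c')$. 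Hence $a_s\Lambda_A\notin U$ for large $s$, as required.

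For the reverse direction, given any open $U\supset\mathcal{K}_\nu(1)$ with $a_s\Lambda_A\notin U$ for large $s$, I need to replace the abstract neighborhood $U$ by one of the canonical sets $\mathcal{K}_\nu(c')$. For each $c'\in(0,1)$ consider $K_{c'} := \mathcal{K}_\nu(c')\setminus U$. Each $K_{c'}$ is compact (closed inside the Mahler-compact $\mathcal{K}_\nu(c')$), and the family is nested in $c'$, with total intersection $\bigcap_{c'<1} K_{c'} = \mathcal{K}_\nu(1)\setminus U = \varnothing$. The finite intersection property then forces $K_{c'}=\varnothing$ for some $c'<1$, i.e.\ $\mathcal{K}_\nu(c')\subset U$. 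Consequently $a_s\Lambda_A \notin \mathcal{K}_\nu(c')$ for large $s$, and setting $c:=(c')^{d/m}<1$, Proposition~\ref{danicorr} together with \equ{corrconst} yields $A\in D_\nu(c\psi_{n/m})\subset \widehat D_\nu$. The only step carrying any substance is this nested-compacts argument, and Mahler's criterion makes it essentially immediate, so I anticipate no real obstacle.
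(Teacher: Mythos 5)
Your proposal is correct and fills in exactly the argument the paper intends: the paper states the proposition as an immediate consequence of Proposition~\ref{danicorr} and \equ{corrconst}, relying on the facts it records just before — that for $r<1$ the compact set ${\mathcal K}_\nu(r)$ contains an open neighborhood of ${\mathcal K}_\nu(1)$ and that ${\mathcal K}_\nu(1)=\bigcap_{r<1}{\mathcal K}_\nu(r)$ — which are precisely the two ingredients you use (via $\delta_\nu^{-1}\bigl((c',1]\bigr)$ and the nested-compacts argument, respectively). No gaps; this matches the paper's route.
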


\section{$\widehat D_\nu$ is a winning set of measure zero}\label{ergodicity}
One implication of the correspondence described in the preceding section is an affirmative action to Question (i)  from the introduction: 
\begin{thm}\label{measurezero} For any  norm $\nu$ on $\R^{d}$, the set $\widehat D_\nu$   has Lebesgue measure zero.
\end{thm}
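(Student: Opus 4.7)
The plan is to combine the dynamical reformulation in Proposition~\ref{exceptional} with the ergodicity of the diagonal flow $F=\{a_s\}$ on $X=\SL_d(\R)/\SL_d(\Z)$.

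First I would realize $\widehat D_\nu$ as a countable union of candidate null sets. Since $\delta_\nu$ is continuous on $X$ and the critical locus $\mathcal K_\nu(1)=\delta_\nu^{-1}(1)$ is compact (Mahler's compactness criterion applied to $\mathcal K_\nu(1)\subset\mathcal K_\nu(r)$ for any $r<1$), the nested open neighborhoods $U_k:=\delta_\nu^{-1}\bigl((1-\tfrac1k,1]\bigr)$ decrease to $\mathcal K_\nu(1)$, and Proposition~\ref{exceptional} gives
$$\widehat D_\nu = \bigcup_{k,T\in\N} E_{k,T}, \qquad E_{k,T}:=\{A\in\mr : a_s\Lambda_A\notin U_k\ \text{for all }s\ge T\}.$$
By countable subadditivity it suffices to prove $\mathrm{Leb}(E_{k,T})=0$ for every $k,T$.

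Next I would invoke ergodicity on the ambient space. By Moore's theorem, $F$ acts mixingly, hence ergodically, on $(X,\mu)$ for the Haar probability measure $\mu$. Since $U_k$ is open and nonempty, $\mu(U_k)>0$, so Birkhoff's ergodic theorem implies that the Haar analogue $\widetilde E_{k,T}:=\{\Lambda\in X:a_s\Lambda\notin U_k\ \forall s\ge T\}$ is $\mu$-null: for $\mu$-a.e.\ $\Lambda$, the orbit $\{a_s\Lambda\}_{s\ge 0}$ enters $U_k$ with positive density, hence at arbitrarily large times.

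The crucial technical step is to transfer this Haar-null statement on $X$ into a Lebesgue-null statement on $\mr$. The subgroup $U^+=\{u_A:A\in\mr\}$ is exactly the expanding horospherical of $F$, and under $A\mapsto u_A$, Lebesgue measure on $\mr$ coincides with Haar measure on $U^+$. Mixing of $F$ then yields equidistribution of $a_s$-translates of compact pieces of horospherical leaves (Margulis' thickening argument): for any bounded $B\subset\mr$ of positive measure and any $f\in C_c(X)$,
$$\frac{1}{\mathrm{Leb}(B)}\int_B f(a_s\Lambda_A)\,dA\ \longrightarrow\ \int_X f\,d\mu\quad\text{as }s\to\infty.$$
A pointwise upgrade --- via Birkhoff applied to the skew product, a Fubini-type disintegration of $\mu$ in a $U^-\!\cdot P$ local chart, or a quantitative Borel--Cantelli argument along the lines of Kleinbock--Margulis --- then shows that for Lebesgue-a.e.\ $A\in\mr$ the orbit $\{a_s\Lambda_A\}_{s\ge 0}$ enters $U_k$ for arbitrarily large $s$, whence $\mathrm{Leb}(E_{k,T})=0$. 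The main obstacle is precisely this last step: bridging Haar-genericity on $X$ to Lebesgue-genericity on the matrix parameter space along the expanding unipotent, which is standard but constitutes the technical heart of the argument.
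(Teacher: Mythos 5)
Your outline follows essentially the same route as the paper: reduce via Proposition \ref{exceptional} to a statement about the $a_s$-orbit of $\Lambda_A$, get the Haar-generic statement on $X$ from Moore's ergodicity theorem, and transfer it to Lebesgue-a.e.\ $A\in\mr$ using the local product structure of $\SL_d(\R)$ around the expanding horospherical subgroup. The step you leave as a menu of options ("Margulis thickening / Birkhoff on a skew product / Fubini in a local chart / quantitative Borel--Cantelli") is exactly the step the paper must actually execute, and the way it does so (Proposition \ref{erg}) is worth noting because your specific decomposition $\widehat D_\nu=\bigcup E_{k,T}$ makes it more delicate than you acknowledge. Writing $g=h_-zu_A$ with $h_-$ contracting and $z$ in the centralizer of $a_s$, one has $a_sg\Z^d=(a_sh_-a_{-s})\,z\,a_s\Lambda_A$ with $a_sh_-a_{-s}\to I_d$; so the property one transfers from Haar-a.e.\ $g$ to Lebesgue-a.e.\ $A$ must be invariant under left multiplication by the \emph{fixed} element $z$ and stable under a perturbation tending to the identity. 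Density of the forward orbit has both features, which is why the paper proves that $\{a_s\Lambda_A:s>0\}$ is dense for a.e.\ $A$ and then concludes immediately; by contrast, "enters $U_k=\delta_\nu^{-1}((1-\tfrac1k,1])$ at unbounded times" is not $z$-invariant (you would be comparing visits to $U_k$ with visits to $zU_k$), so a direct Fubini transfer of your sets $E_{k,T}$ stalls at the centralizer factor. The rest of your outline (the $U_k$ form a neighborhood basis of the compact critical locus, $\mu(U_k)>0$, Birkhoff) is correct; I would simply replace the pointwise-upgrade step by the density statement plus Fubini, which requires no equidistribution or quantitative input.
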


In view of Proposition \ref{exceptional}, it is clear that the above theorem immediately follows from
\begin{prop}\label{erg} For Lebesgue-almost every \amr\ the trajectory \eq{traj_0}{\{a_s\Lambda_A: s > 0\}} is dense in $X$.
\end{prop}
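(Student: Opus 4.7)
The plan is to reduce Proposition~\ref{erg} to the ergodicity of the $F$-action on $X=\SL_d(\R)/\SL_d(\Z)$ and then transfer the ``a.e.\ point has dense forward orbit'' conclusion on $X$ to almost every point on the expanding leaf $U^+\Z^d$, where $U^+:=\{u_A : A\in\mr\}$.

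First, by the Howe--Moore theorem (applied to the non-relatively-compact one-parameter subgroup $F$), the $F$-action on $X$ is mixing, and in particular ergodic, with respect to Haar measure $\mu$. Taking a countable basis $\{V_k\}$ for the topology of $X$, each set $E_k:=\{x\in X : a_s x\in V_k\text{ for arbitrarily large }s\}$ is $F$-invariant and of positive measure, and hence of full measure by ergodicity. Thus the set $D:=\bigcap_k E_k$ of points with dense forward $F$-orbit satisfies $\mu(D)=1$.

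Next, I would show that $D$ is invariant under the parabolic subgroup $P^-:=U^-\cdot Z(F)\subset\SL_d(\R)$, where $Z(F)$ is the centralizer of $F$ (block-diagonal matrices with an $m\times m$ and an $n\times n$ block) and $U^-$ is the opposite horospherical subgroup of block-lower-unipotent matrices. For $u\in U^-$: if $a_{s_k}x\to y$, then $a_{s_k}ux=(a_{s_k}ua_{-s_k})\,a_{s_k}x\to y$, since conjugation by $a_{s_k}$ contracts $U^-$ to the identity as $s_k\to\infty$. For $z\in Z(F)$: $a_{s_k}zx=z\,a_{s_k}x\to zy$, and since $z$ acts on $X$ by homeomorphism, the closure of the forward $F$-orbit of $zx$ equals $zX=X$.

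Finally, I would invoke the local Bruhat decomposition $\SL_d(\R)=P^-U^+$: near the identity the multiplication map $P^-\times U^+\to\SL_d(\R)$ is a diffeomorphism onto an open set and pushes the product of Haar measures to a smooth multiple of Haar measure. Composing with $g\mapsto g\cdot u_{A_0}\Z^d$ for a fixed $A_0\in\mr$ yields a local chart for $X$ near $u_{A_0}\Z^d$. By the $P^-$-invariance of $D$, the preimage of $D$ in this chart has the product form $P^-\times D_0$ with $D_0\subset U^+$; since the preimage of $D^c$ has Haar measure zero, Fubini forces $D_0$ to have full measure in a neighborhood of $e\in U^+$. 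Using commutativity $u_Au_{A_0}=u_{A+A_0}$ in the abelian group $U^+$, this translates into $u_{A'}\Z^d\in D$ for a.e.\ $A'$ in a neighborhood of $A_0$; covering $\mr$ by countably many such neighborhoods concludes. I expect the $U^-$-invariance of $D$ to be the main technical point, as it is the mechanism by which ergodicity on $X$ propagates to almost every lattice on the expanding leaf and crucially uses the \emph{hyperbolic} nature of $a_s$ rather than just its ergodicity; the rest of the argument is an application of Fubini in the local product chart.
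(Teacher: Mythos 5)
Your proof is correct and follows essentially the same strategy as the paper's: reduce to ergodicity of the $a_s$-action on $X$ (Moore's theorem, or Howe--Moore as you invoke), propagate from Haar-a.e.\ $g$ to Lebesgue-a.e.\ $A$ via the local product decomposition $\SL_d(\R)\approx H_-\cdot Z\cdot H$ (your $P^-U^+$), exploit the $a_s$-conjugation contraction of the opposite horospherical factor, and finish with Fubini. The only cosmetic difference is that the paper phrases the transfer directly as ``$\{a_sg\Z^d\}$ dense iff $\{a_s\Lambda_A\}$ dense'' for $g$ in the big cell, whereas you package the same mechanism as $P^-$-invariance of the set $D$ of points with dense forward orbit.
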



For the case $\min(m,n) = 1$ the proof {of Proposition \ref{erg}}, which capitalizes on \cite{Davenport-Schmidt2} and is based on geometry of numbers, can be found  in  \cite{Schmidt-density}. In \cite{Davenport-Schmidt2}  a slightly weaker statement was used to establish   {Theorem} \ref{measurezero} for $\nu = \|\cdot\|_\infty$.
It is not clear if the argument of \cite{Schmidt-density} extends to arbitrary $m,n$.
However, as first observed by Dani, the above proposition can be easily derived from the ergodicity of the  $a_s$-action on $X$.
  
\begin{proof}[Proof of Proposition \ref{erg}] The argument is fairly standard. We need to prove that for Lebesgue-a.e.\ \amr\ the trajectory $\{a_s\Lambda_A: s > 0\}$ is dense in $X$. It is easy to see that  {elements $g\in\SL_{d}(\R)$ of the form}
\eq{decomp}{g = \left[ {\begin{array}{cc}
   I_m & 0 \\
   C & I_n \\
  \end{array} } \right]  \left[ {\begin{array}{cc}
   B & 0 \\
   0 & D \\
  \end{array} } \right]
{u_A},
}
  where \amr, {$C\in M_{n, m}$, $B\in M_{m, m}$, $D \in M_{n, n}$} with $\det(B) \det(D) = 1$, form an open dense subset of $\SL_{d}(\R)$ of full Haar measure.  That is, $\SL_{d}(\R)$ is locally a direct product of \eq{defh}{H_- := \left\{\left[ {\begin{array}{cc}
   I_m & 0 \\
   C & I_n \\
  \end{array} } \right]: C\in M_{n, m}\right\},\ H := \left\{
u_A: A\in \mr
  \right\}} (those are the contracting and expanding horospherical subgroups\footnote{A subgroup $H$ of a Lie group $G$ is said to be {\sl expanding horospherical}  relative to a one-parameter semigroup $\{a_s : s > 0\}\subset G$ if its Lie algebra is a direct sum of generalized eigenspaces of $\Ad(a_1)$ with eigenvalues of absolute values bigger than $1$; the contracting  horospherical subgroup is defined similarly.} relative to $\{a_s : s > 0\}$), and the centralizer $$Z = \left\{\left[ {\begin{array}{cc}
    B & 0 \\
   0 & D \\
  \end{array} } \right]:B\in M_{m, m},\ D \in M_{n, n}\right\}$$ of $a_s$.   
  On the other hand, the ergodicity of the $a_s$-action on $X$ (Moore's Ergodicity Theorem) implies that  for Haar-a.e.\ $g\in\SL_{d}(\R)$ the trajectory $\{a_sg\Z^{d}: s > 0\}$ is dense in $X$. Now one can write
  $$a_sg\Z^{d} = \left[ {\begin{array}{cc}
   I_m & 0 \\
   e^{-\frac{d}{mn}s}C & I_n \\
  \end{array} } \right]  \left[ {\begin{array}{cc}
   B & 0 \\
   0 & D \\
  \end{array} } \right]a_s\Lambda_A,
$$
  and, 
since $ \left[ {\begin{array}{cc}
   I_m & 0 \\
   e^{-\frac{d}{mn}s}C & I_n \\
  \end{array} } \right] $ tends to $I_d$ as $s\to\infty$, conclude that,
 for $g$ of the form \equ{decomp}, 
 $\{a_sg\Z^{d}: s > 0\}$ is dense in $X$ if and only if so is $\{a_s\Lambda_A: s > 0\}$. The claim then follows from Fubini's Theorem and the local product structure of Haar measure on $\SL_{d}(\R)$. 
  \end{proof}

Let us now   address Question (ii) from the introduction in the bigger generality of systems of linear forms, that is, construct sufficiently many 
$\nu$-Dirichlet-improvable \amr. 
In view of Proposition \ref{exceptional} the problem can be restated as follows: find sufficiently many
 \amr\ such that the set of limit points of the trajectory  \equ{traj_0} is disjoint from ${\mathcal K}_\nu(1)$. 
This circle of problems has a rich history, see \cite{K99,  AGK} and references therein.
In order to use some results from the aforementioned papers we need to introduce some more terminology.

\begin{defn}\label{trans} \rm
Let $G$ be a Lie group, $\Gamma$ a discrete subgroup, $Z$  a ${\mathcal C}^1$ submanifold of $\ggm$,  and  let $F$ and $H$ be two closed subgroups of $G$.
We use $T_x(\cdot)$ to denote the tangent space to a manifold at a point $x$, and $\operatorname{Lie}(\cdot)$ to denote the Lie algebra of a group, i.e.\ the tangent space at the identity element of the group.
We will say that  $Z$   is {\sl $(F,H)$-transversal at $x\in Z$} if the following holds: 
\smallskip

\begin{itemize}
\item[(i)] $T_x(Fx)\not\subset T_xZ$;
\smallskip

\item[(ii)] $T_x(Hx)\not\subset T_xZ\oplus T_x(Fx)$.
 \end{itemize}
 \smallskip

We will say that $Z$   is {\sl $(F,H)$-transversal} if it is $(F,H)$-transversal at its every point. This is a simplified version of the terminology introduced in \cite[\S 4]{K99}. Note that in a special case when $Z$ is an orbit of a Lie subgroup $L$ of $G$, the above conditions (i), (ii) can be easily restated as \eq{translie1}{\Lie(F)\not\subset \Lie(L)}
and 
 \eq{translie2}{ \Lie(H)\not\subset \Lie(L)\oplus\Lie(F)}
 respectively.
\end{defn}
The following was proved in  \cite{K99} for arbitrary $G$ and $\Gamma$, see  \cite[Corollary 4.3.2]{K99}: if $F = \{a_s : s\in\R\}$ is a non-quasiunipotent\footnote{i.e.\ $\Ad(a_1)$ has an eigenvalue with absolute value different from $1$} one-parameter subgroup of $G$,  and $H$ is the expanding horospherical subgroup  relative to $\{a_s : s > 0\}$, then for any  ${\mathcal C}^1$ compact $(F,H)$-transversal submanifold $Z$ of $\ggm$ and any $x\in X$, the set 
\eq{efz}{
\left\{h\in H: \overline{\{a_shx: s \ge 0\}}\cap Z = \varnothing\right\}
}
is thick.
This has been  strengthened in a recent work of An, Guan and the first-named author \cite{AGK}.  Namely, they introduced a notion of {\sl maximally  expanding horospherical subgroup} of $G$ relative to $\{a_s : s > 0\}$, which, in the special case
of $\Ad(a_s)$ being diagonalizable over $\R$, is defined as a subgroup of $G$ whose Lie algebra is the sum of
(real) eigenspaces corresponding to eigenvalues of $\Ad(a_1)$ with maximum absolute value. It is always contained in the expanding horospherical subgroup, and in the special case of $F$  as in \equ{defas} clearly coincides with $H$ as in \equ{defh}.
 \smallskip
 
The next theorem is a special case of \cite[Theorem 2.8]{AGK}:
\begin{thm}\label{etds} Let $G$ be a Lie group, $\Gamma$ a discrete subgroup, $F = \{a_s : s\in\R\}$ a one-parameter subgroup which is $\Ad$-diagonalizable over $\R$, $H$ the maximally  expanding horospherical subgroup of $G$ relative to $\{a_s : s > 0\}$, and $Z$  a ${\mathcal C}^1$ $(F,H)$-transversal submanifold  of $\ggm$. Then for any $x\in \ggm$, the set 
\equ{efz}
is HAW. \end{thm}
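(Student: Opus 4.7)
The plan is to deduce this statement directly from [AGK, Theorem 2.8], which is formulated in greater generality; the work is to verify that all the hypotheses of that theorem are matched by the more restrictive setup here, and that the conclusion specializes correctly. I would organize the reduction into two separate unpackings of definitions.

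First I would reconcile the two notions of ``maximally expanding horospherical subgroup.'' In \cite{AGK} this subgroup is constructed for an arbitrary $F = \{a_s\}$, using the (possibly complex) generalized eigenspace decomposition of $\Ad(a_1)$ to isolate the directions in $\Lie(G)$ along which the flow expands at the maximal exponential rate. Under our additional assumption that $F$ is $\Ad$-diagonalizable over $\R$, there are no Jordan blocks and no complex eigenvalues, so this construction collapses to the simple description quoted in the theorem: $\Lie(H)$ is the real span of all eigenvectors of $\Ad(a_1)$ whose eigenvalues have the largest absolute value. This step is a routine comparison of definitions.

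Second I would check that the transversality hypothesis of \cite{AGK} coincides with the $(F,H)$-transversality of Definition \ref{trans} in our diagonalizable setting. The AGK condition is again formulated to be robust to Jordan-block phenomena, but once $\Ad(a_1)$ is diagonalizable over $\R$ with $H$ as above, it reduces to the pair of pointwise conditions (i) $T_xZ \not\supset T_x(Fx)$ and (ii) $T_x Z \oplus T_x(Fx) \not\supset T_x(Hx)$ at every $x \in Z$, which is exactly what Definition \ref{trans} demands. Once both dictionaries are in place, [AGK, Theorem 2.8] delivers the HAW conclusion verbatim for the set in \equ{efz}.

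The main obstacle, if one chooses not to invoke \cite{AGK} as a black box, is the geometric core of their argument rather than the reduction above: namely, at each stage $k$ of the hyperplane absolute game one must choose a time $s_k$ matched to the radius of Bob's current ball (so that $a_{s_k}$ expands it to unit size in $H$), and then use a quantitative form of $(F,H)$-transversality to show that the set of $h$ in that ball whose $a_{s_k}$-translate lands in a prescribed neighborhood of $Z$ is contained in a thin tube around an affine hyperplane of $H$, which Alice can then legally remove. Accumulating such removals across scales produces an $h_\infty$ whose forward $F$-orbit avoids a fixed neighborhood of $Z$. I would outsource this dynamical argument to \cite{AGK} and present only the specialization described above.
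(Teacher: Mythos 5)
Your proposal matches the paper exactly: Theorem \ref{etds} is stated there as a special case of \cite[Theorem 2.8]{AGK} with no further proof given, so invoking that result as a black box and checking that the diagonalizable-over-$\R$ hypothesis collapses the AGK definitions of the maximally expanding horospherical subgroup and of transversality to the ones in Definition \ref{trans} is precisely what the paper does. Your additional sketch of the game-theoretic core of the AGK argument is a bonus beyond what the paper records.
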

\begin{rem}\label{haw} \rm Hyperplane absolute winning (HAW) property of subsets of Euclidean spaces has been introduced in \cite{BFKRW}, and later extended to subsets of smooth manifolds in \cite{KWe}. When the  ambient manifold is one-dimensional, this notion coincides with absolute winning as introduced by McMullen \cite{Mc}. We refer the reader to those papers, as well as to \cite[\S2.1]{AGK}, for  definitions and more information. The important aspects are that the HAW property is stable under countable intersections and implies winning in the sense of Schmidt \cite{S}, which, in its turn, implies thickness.\end{rem}

\smallskip

Applying Theorem \ref{etds} to $x = \Z^{d}\in X = \SL_{d}(\R)/\SL_{d}(\Z)$ and using Proposition \ref{exceptional}, we immediately obtain
\begin{cor}\label{thick} Let 
 $F$  be as in \equ{defas} and $H$ as in \equ{defh}, and let $\nu$ be a  norm on $\R^{d}$. Suppose that 
\eq{transversality}{
\begin{aligned}
\ \text{ the critical locus } &{\mathcal K}_\nu(1) \text{ is contained in the union of finitely many}\\ \text{  } {\mathcal C}^1\text{ compact  }&(F,H)\text{-transversal submanifolds of }X. 
\end{aligned}}
Then 
$\widehat D_\nu\subset \mr$
is hyperplane absolute winning.
\end{cor}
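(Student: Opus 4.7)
The plan is to combine Theorem~\ref{etds} with Proposition~\ref{exceptional}, with essentially all the substantive work done by Theorem~\ref{etds}. Using hypothesis \equ{transversality}, first write $\mathcal{K}_\nu(1) \subset Z_1 \cup \cdots \cup Z_N$ where each $Z_i$ is a $\mathcal{C}^1$ compact $(F,H)$-transversal submanifold of $X = \SL_{d}(\R)/\SL_{d}(\Z)$. For each $i$, apply Theorem~\ref{etds} with basepoint $x = \Z^{d}$ to conclude that
$$
\tilde E_i \ := \ \bigl\{h\in H:\ \overline{\{a_s h\Z^{d}: s \ge 0\}}\cap Z_i = \varnothing\bigr\}
$$
is HAW as a subset of the smooth manifold $H$. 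Pulling back along the $\mathcal{C}^1$-diffeomorphism $M_{m,n}\to H$, $A\mapsto u_A$, which identifies $u_A\Z^{d}$ with $\Lambda_A$, each $\tilde E_i$ corresponds to the HAW subset
$$
E_i \ := \ \bigl\{A\in M_{m,n}:\ \overline{\{a_s\Lambda_A: s \ge 0\}}\cap Z_i = \varnothing\bigr\}
$$
of $M_{m,n}$.

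Next, since the HAW property is stable under countable (in particular, finite) intersections, the set $E := \bigcap_{i=1}^N E_i$ is HAW. I then verify the inclusion $E\subset \widehat D_\nu$: for $A\in E$, the closure of the forward orbit $\{a_s\Lambda_A:s\ge 0\}$ is disjoint from the compact set $K := Z_1\cup\cdots\cup Z_N$, hence some open neighborhood $U$ of $K$ avoids the forward orbit; since $U$ is in particular an open neighborhood of $\mathcal{K}_\nu(1)$, Proposition~\ref{exceptional} yields $A\in \widehat D_\nu$. Finally, supersets of HAW sets are HAW (any winning strategy for the smaller target in the hyperplane absolute game is a winning strategy for the larger target), so $\widehat D_\nu$ is HAW.

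I do not foresee a genuine obstacle here: Theorem~\ref{etds} is the hard input, Proposition~\ref{exceptional} converts the dynamical avoidance statement back into a statement about $\nu$-Dirichlet-improvable systems, and the finite-intersection and superset stability of HAW are standard features of the game as formulated in \cite{BFKRW, KWe}. The one point requiring mild care is checking that the transversality hypothesis \equ{transversality} is precisely what is needed to invoke Theorem~\ref{etds}, and that the HAW property transfers faithfully across the global chart $A\mapsto u_A$; the latter is automatic because the map is a $\mathcal{C}^1$-diffeomorphism of smooth manifolds, so the framework of \cite{KWe} applies verbatim.
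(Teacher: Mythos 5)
Your proposal is correct and takes exactly the route the paper intends when it says the corollary "immediately" follows from Theorem~\ref{etds} (applied with $x=\Z^d$) and Proposition~\ref{exceptional}: write $\mathcal{K}_\nu(1)\subset Z_1\cup\cdots\cup Z_N$, get the HAW property of each orbit-avoidance set, intersect finitely many, identify $H$ with $M_{m,n}$, and pass to the superset $\widehat D_\nu$ via the compactness argument and Proposition~\ref{exceptional}. The paper omits these details, but your filling-in of the finite-intersection step, the upward closure of HAW, and the passage from "closure disjoint from compact $K$" to "open neighborhood of $K$ avoided" is precisely what is needed.
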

Note that condition \equ{transversality}  is not satisfied for the supremum norm, simply because 
the whole orbit $H\Z^{d}$ belongs to $\mathcal{K}_\infty(1)$. 
However the conclusion of Corollary \ref{thick} 
still holds for $\nu = \|\cdot\|_\infty$ due to a theorem of Davenport and Schmidt: 
it is proved in \cite{Davenport-Schmidt2} that 
$\widehat D_\infty$ contains the set 
of badly approximable systems of linear forms.
The latter was shown by Schmidt to be  winning \cite{Schmidt-BA}, and, more recently,  Broderick, Fishman and Simmons \cite{BFS} established its HAW property.


We will  
now consider two special cases. 
The first is the Euclidean norm in arbitrary dimension. 
Lattices critical with respect  to the Euclidean norm have been studied as far back as the {$17$th century in the context of sphere packings, and later in the context of positive definite quadratic forms.}
See the book of Martinet \cite{mar} for a detailed account and exhaustive references.

\begin{thm}\label{euclidean}
The critical locus ${\mathcal K}_2(1)\subset X$ corresponding to the Euclidean norm on $\R^{d}$ is contained in the union of finitely many $\SO(d)$-orbits, and each orbit  
is an $(F,H)$-transversal submanifold of $X$.
Consequently, $\widehat D_2\subset\mr$ is HAW.
\end{thm}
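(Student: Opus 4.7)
The plan is to verify hypothesis \equ{transversality} of Corollary \ref{thick} for $\nu=\|\cdot\|_2$, after which the HAW conclusion of the theorem is immediate. The work splits into two independent pieces: finiteness of $\SO(d)$-orbits inside $\mathcal{K}_2(1)$, and the $(F,H)$-transversality of each such orbit.

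For the finiteness, observe that $\mathcal{K}_2(1)$ is the set of unimodular lattices of $\R^d$ attaining the densest sphere packing, i.e.\ the critical lattices of Minkowski's theory. By Mahler's compactness criterion it is compact in $X$, and by Voronoi's classical theorem a critical lattice is perfect; since a perfect lattice admits no continuous deformation (modulo isometry) preserving the shortest-vector length, the image of $\mathcal{K}_2(1)$ in the quotient $\SO(d)\backslash X$ is simultaneously compact and discrete, hence finite (see \cite{mar} for background). Consequently $\mathcal{K}_2(1)$ is a union of finitely many $\SO(d)$-orbits, and each such orbit is a $\mathcal{C}^1$ compact submanifold of $X$ because $\SO(d)$ is compact and the stabilizer of any critical lattice in $\SO(d)$ is a finite group.

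For the transversality, set $L=\SO(d)$ and let $Z=L\Lambda_0$ denote one of the orbits above. Since $\SL_d(\Z)$ is discrete, the orbit map $g\mapsto gy$ from $\SL_d(\R)$ to $X$ is a local diffeomorphism at the identity for every $y\in X$, so that the resulting identification $\Lie(G)\xrightarrow{\sim} T_yX$ carries $T_y(Fy)$, $T_y(Ly)$, $T_y(Hy)$ to $\Lie(F)$, $\Lie(L)$, $\Lie(H)$ respectively. Thus transversality at every point of $Z$ reduces to the Lie-algebra conditions \equ{translie1}--\equ{translie2}, which I would verify by a short block computation. The generator $D=\operatorname{diag}\bigl(\tfrac{1}{m}I_m,-\tfrac{1}{n}I_n\bigr)$ of $\Lie(F)$ is nonzero and diagonal, hence not in the antisymmetric subalgebra $\mathfrak{so}(d)=\Lie(L)$, which gives (i). For (ii), suppose a nonzero element of $\Lie(H)$ with upper-right block $A$ equals $X+cD$ with $X\in\mathfrak{so}(d)$ and $c\in\R$; matching the diagonal of the upper-left block forces $c=0$ (as $X$'s diagonal entries are zero), leaving $X=\bigl(\begin{smallmatrix}0 & A \\ 0 & 0\end{smallmatrix}\bigr)$, which is antisymmetric only if $A=0$ --- a contradiction.

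I expect the only genuinely nontrivial ingredient to be the finiteness step, since the transversality amounts to routine linear algebra at the Lie-algebra level and then applies uniformly at every point of every orbit. The cleanest path is to quote the classical theory of perfect and extreme lattices via \cite{mar}; a more self-contained alternative would combine the compactness of $\mathcal{K}_2(1)$ provided by Mahler's criterion with Voronoi's characterization of extreme lattices as perfect ones.
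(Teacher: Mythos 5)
Your transversality argument is essentially the paper's: you reduce to the Lie-algebra criteria \equ{translie1}--\equ{translie2} for $L=\SO(d)$ and verify them by the same block computation (the paper states the verification more tersely, but it is the same observation that $\mathfrak{so}(d)$ is skew-symmetric, $\Lie(F)$ is diagonal, and $\Lie(H)$ is strictly upper block triangular). That half of your proposal is fine.

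The finiteness half takes a genuinely different route from the paper's, and as written it has a gap. The paper does \emph{not} appeal to Voronoi's theory of perfect and eutactic forms; instead it runs a direct local-rigidity argument using the three lemmas of Martinet (Lemmas 3.4.2 and 3.4.4(1)--(2)): given a sequence $g_k\to I$ in $\SL_d(\R)\smallsetminus\SO(d)$ with $g_k\Lambda_0$ critical, one writes $g_k^Tg_k=I_d+h_k$, shows the $h_k$ lie in the cone $C=\{h:\vv^Th\vv\ge 0\ \text{for all minimal}\ \vv\}$, proves $C\smallsetminus\{0\}$ has strictly positive trace, and then derives $\det(g_k)>1$, a contradiction. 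Your argument instead asserts (a) critical implies perfect by Voronoi, and (b) a perfect lattice ``admits no continuous deformation (modulo isometry) preserving the shortest-vector length,'' and concludes discreteness of the quotient. Two problems. First, perfection alone only gives first-order rigidity: for a path $g_t$ with constant minimal norm, minimality forces $\vv^T(\dot g_0)_{\mathrm{sym}}\vv=0$ for all minimal $\vv$, hence $(\dot g_0)_{\mathrm{sym}}=0$; but this says only that the path is tangent to $\SO(d)$ at $t=0$, not that it stays in $\SO(d)$. To close the argument you actually need the positive-trace property of the cone $C$, which is exactly the content of eutaxy (critical lattices are perfect \emph{and} eutactic by Voronoi, and eutaxy gives $\operatorname{tr}(h)=\sum\lambda_\vv\,\vv^Th\vv>0$ for nonzero $h\in C$). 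Second, ``no continuous deformation'' is a path statement, while discreteness in $\SO(d)\backslash X$ is a statement about sequences; ruling out arcs does not by itself rule out a non-discrete limit set. Both gaps disappear if you instead cite Voronoi's finiteness theorem (there are only finitely many perfect forms in dimension $d$ up to similarity), which is in \cite{mar}; that is a perfectly valid alternative route, but it invokes a substantially heavier theorem than the paper's self-contained argument via Korkine--Zolotareff/Martinet. I would either cite Voronoi finiteness outright, or reproduce the cone/trace argument as the paper does.
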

\begin{proof}  It follows form the work of Korkine and Zolotareff (\cite{KZ}, see also \cite[Theorem 3.4.5]{mar}) that whenever $\Lambda\in {\mathcal K}_2(1)$, any lattice in $X$ sufficiently close to $\Lambda$ is an \textit{isometric image} of $\Lambda$, that is, a lattice of the form $g\Lambda$ with $g\in 
\operatorname{SO}(d)$.
For the sake of 
making the paper self-contained we state the lemmas required to prove this and indicate how to use them.
\begin{lem}[\cite{mar}, Lemma 3.4.2]\label{martinet-1}
Let $\Lambda$ be any lattice in $\R^d$.
Then there exists a neighborhood $V$ of the identity in $\GL_d(\R)$ such that for any $g\in V$ the nonzero vectors in $g\Lambda$ of minimal Euclidean norm are images under $g$ of such minimal vectors in $\Lambda$.
\end{lem}
\begin{lem}[\cite{mar}, Lemma 3.4.4(1)] \label{martinet-2}
There is an open neighborhood $W$ of $0$ in the vector space $\operatorname{Sym}_d(\R)$   of real symmetric $d\times d$ matrices such that, for $h \in W$ with $\operatorname{tr}(h) \leq 0$ and for $g \in \GL_d(\R)$ satisfying $g^tg = I_d + h$, we have either $g \in \operatorname{O}(d)$ or $\det(g)<1$.
\end{lem}
\begin{lem}[\cite{mar}, Lemma 3.4.4(2)]\label{martinet-3}
Let $C$ be a closed cone in $\operatorname{Sym}_d(\R)$ consisting of matrices with positive trace:  \eq{trace}{\operatorname{tr}(h) >0\text{ for every nonzero }h \in C.}
Then there exists a neighborhood $W_C$ of $0$ in  $\operatorname{Sym}_d(\R)$ such that
$$h\ne 0 \text{ and }h \in W_C \cap C 
 \implies \det(I_d + h) > 1.
$$
\end{lem}
Let us now proceed with the proof of Theorem \ref{euclidean}. 
Take $\Lambda_0 \in \mathcal{K}_2(1)$, and suppose $g_k \in \SL_d(\R)\smallsetminus \operatorname{SO}(d)$ are such that $g_k$ converges to the identity as $k\to\infty$  and $g_k\Lambda_0 \in \mathcal{K}_2(1)$ for all $k$.
Define symmetric matrices $h_k$ by setting ${g_k^t}g_k = I_d + h_k$; note that $h_k\ne 0$ since $g_k\notin \operatorname{SO}(d)$.
We first claim that each $h_k$ belongs to the closed cone 
\begin{equation}\label{closed-cone}
C := \left\{ h \in \operatorname{Sym}_d(\R): \vv^th\vv \geq 0 \text{ for all } \vv\in \Lambda_0 \smallsetminus \{0\} \text{ with } \|\vv\|_2 \text{ minimal}\right\}.
\end{equation}
For this, note that for any $\vv\in\R^d$ we have $$\vv^th_k\vv = \vv^t({g_k^t}g_k - I_d)\vv  = \|g_k\vv\|^2_2 - \|\vv\|^2_2,$$ 
Since both $\Lambda_0$ and $g_k\Lambda_0$ are in $\mathcal{K}_2(1)$,  for any nonzero $\vv \in \Lambda_0$ with minimal norm this implies $\|g_k\vv\|_2^2 - \|\vv\|_2^2
\geq 0$, which proves the claim.

The next claim is that $C$
as in \eqref{closed-cone} 
satisfies 
\equ{trace}.
Indeed, for any nonzero $h\in C$  consider the suitably scaled matrix $h' = c h$ ($c>0$) which lies in $W\cap C$, $W$ being the open set in Lemma \ref{martinet-2}. 
By further decreasing $c$ we can assume that there exists $g\in V$, where $V$ is as in Lemma~\ref{martinet-1},  such that $g^tg = I_d + h'$.
Since $c >0$, it suffices to prove that $\operatorname{tr}(h') >0$.

Assume, on the contrary, that $\operatorname{tr}(h') \leq 0$. Lemma \ref{martinet-2} then says that we must have $g\in \operatorname{O}(d)$ or $\det(g)<1$.
The fact that $h$ is nonzero precludes the first alternative, and so it follows that $\det(g)<1$.
Now consider the  unimodular lattice $$\Lambda := \frac1{\det(g)^{1/d}}g\Lambda_0.$$
For any nonzero  vector $\vv \in \Lambda_0$ of minimal length we have that 
$$
\left\|\frac1{\det(g)^{1/d}}g\vv\right\|_2^2 = \frac1{\det(g)^{2/d}}\vv^tg^tg\vv = \frac1{\det(g)^{2/d}}\left(\|\vv\|_2^2 + \vv^t h' \vv \right) > \|\vv\|_2^2
$$
since $h'\in C$ and $\det(g)<1$.
Lemma \ref{martinet-1} then shows that the length of the shortest vector in $\Lambda$ must be greater than that for $\Lambda_0$.
However the fact that $\Lambda_0 \in \mathcal{K}_2(1)$ actually implies that $\Lambda_0$ is a global maximum of the function $\delta_2$ defined in \equ{delta} with $\|\cdot\| = \nu_2$,
a contradiction.

Thus our claim is proved, and 
the stage is set for applying 
Lemma \ref{martinet-3}. Indeed, we have $h_k\in C\smallsetminus \{0\}$; since $h_k \to 0$, we can assume that $h_k\in W_C$ for large enough $k$.
%
%
Hence $\det(I_d + h_k) = \det(g_k)^2>1$, contradicting the assumption that $g_k\in \SL_d(\R)$.

This argument, together with compactness of $\mathcal{K}_2(1)$, implies that the critical locus ${\mathcal K}_2(1)$ is contained in the union of finitely many $\SO(d)$-orbits.
Thus it suffices to check the transversality conditions for  $Z$ being a single 
orbit; that is, the validity of \equ{translie1} and  \equ{translie2} for $L = \SO(d)$. 
The latter is straightforward, since $\Lie (L) = \mathfrak{so}(d)$ consists of skew-symmetric matrices and hence does not contain $\Lie (F)$ for $F$  as in 
 \equ{defas}; likewise,  $\Lie (H)$ for $H$ as in 
 \equ{defh} consists of upper-triangular matrices and hence is not contained in $
 \mathfrak{so}(d)\oplus\Lie(F)$.
\ignore{It remains to conclude that the last statement of the theorem follows from the first two statements in view of Theorem \ref{etds}.}
\end{proof}

From now until the end of the paper we restrict our attention to $m=n=1$ and prove 
Theorems  \ref{winning}--\ref{dtpsieucl2}.  Recall that in this low-dimensional case we are working with  $X = \ggm$, where $G = \SL_{2}(\R)$ and $\Gamma = \SL_{2}(\Z)$, and the subgroups $F,H$ of $G$ are  one-parameter of the form 
\eq{hf}{
H = \{u_\alpha : \alpha\in\R\},\ F = \{a_s : s\in\R\}, \text{ where } u_\alpha =  \left[ {\begin{array}{cc}
   1 & \alpha \\
   0 & 1 \\
  \end{array} } \right],\ a_s = \left[ {\begin{array}{cc}
   e^{s}  & 0 \\
   0 & e^{-s} \\
  \end{array} } \right].
}
For any point $x\in X$, the left action of $G$ on $X$ induces a local diffeomorphism $G \to X$,  $g \mapsto gx$.
We identify the tangent space $T_x(X)$ with $\operatorname{Lie}(G)$ through this map.
Thus any subalgebra of $\operatorname{Lie}(G)$ defines a distribution in the tangent bundle $T(X)$.
Note that in the case when $Z\subset X$ is a one-dimensional submanifold and with $F, H$ as above, it is easy to check that the transversality conditions of Definition \ref{trans} are equivalent to the statement that at each $z \in Z$, 
\eq{tangent}{T_z(Z) 
\text{ is not contained in the distribution generated by} \operatorname{Lie}(P),}
where $P:= \left\{\left[ {\begin{array}{cc}
   a & b \\
   0 & a^{-1} \\
  \end{array} } \right]\right\}$ is the group of upper-triangular $2\times 2$ matrices.

\smallskip

Another case for which we verify  \equ{transversality} is for norms in $\R^2$ whose unit balls are not parallelograms.
Due to the nature of the argument, we have relegated the proof of the following theorem to the Appendix:
\begin{thm}\label{special-theorem-appendix}
If $\nu$ is a norm in $\R^2$ whose unit ball is not a parallelogram, the critical locus $\mathcal{K}_{\nu}(1)$ is contained in a one-dimensional $(F,H)$-transversal $\mathcal{C}^1$-submanifold of $X$.
\end{thm}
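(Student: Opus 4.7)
The plan is to show that locally near each critical lattice $\Lambda_0 \in \mathcal{K}_\nu(1)$ the critical locus sits inside a one-dimensional $\mathcal{C}^1$-curve, and then patch these local curves together using compactness of $\mathcal{K}_\nu(1)$.

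The starting point is a classical structure theorem for critical lattices in the plane due to Minkowski (see \cite[Chapter V]{ca} and \cite{ma1,ma2}): since $B_\nu(1)$ is not a parallelogram, every $\Lambda_0 \in \mathcal{K}_\nu(1)$ carries at least three pairs of minimal vectors $\pm v_1, \pm v_2, \pm v_3$ on the boundary of $B_\nu(\Delta_\nu^{-1/2})$, satisfying the linear relation $v_1 + v_2 + v_3 = 0$ after suitable sign choices. I would then parametrize nearby lattices as $g\Lambda_0$ for $g$ close to $I$ in $\SL_2(\R)$. Because the $v_i$ are strict local minima of $\nu$ on $\Lambda_0\setminus\{0\}$ when the ball is not a parallelogram, continuity ensures that the shortest vectors of $g\Lambda_0$ remain $\pm g v_i$, so the condition $g\Lambda_0 \in \mathcal{K}_\nu(1)$ becomes the finite system $\nu(g v_i) \geq \nu(v_i)$ for $i=1,2,3$.

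Next I would linearize at $g = I$: writing $g = \exp(tX)$ with $X \in \mathfrak{sl}_2(\R) \cong \R^3$ and examining both signs of $t$, one obtains, when $\nu$ is differentiable at each $v_i$, three linear equations $L_i(X) := \langle \xi_i, X v_i\rangle = 0$, where $\xi_i$ is the outer unit normal at $v_i$; in the non-smooth case these are replaced by subdifferential cone conditions. The crux of the proof is to show that the rank of $(L_1, L_2, L_3)$ on $\mathfrak{sl}_2(\R)$ is at least two, equivalently that the tangent cone to $\mathcal{K}_\nu(1)$ at $\Lambda_0$ is at most one-dimensional. Rank $\le 1$ would mean the matrices $M_i := v_i \xi_i^T - \tfrac12\langle\xi_i, v_i\rangle I \in \mathfrak{sl}_2(\R)$ are pairwise proportional; combined with $v_1+v_2+v_3=0$ this rigidly forces $\partial B_\nu(\Delta_\nu^{-1/2})$ to contain supporting segments through the $v_i$ that assemble into a parallelogram, contradicting the hypothesis. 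Given this rank bound, the implicit function theorem (or a direct construction moving $v_1$ along $\partial B_\nu(\Delta_\nu^{-1/2})$ and reconstructing a critical lattice via Minkowski's theorem) produces a $\mathcal{C}^1$ one-dimensional submanifold $Z_{\Lambda_0}\subset X$ containing $\mathcal{K}_\nu(1)$ in a neighborhood of $\Lambda_0$.

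Finally, compactness of $\mathcal{K}_\nu(1)$ (Mahler's compactness criterion) yields a finite subcover by such local pieces, whose disjoint union is the desired submanifold. The transversality condition \equ{tangent} is verified at each point by noting that $\Lie(P)$ is the two-plane of traceless upper-triangular matrices, which can contain the tangent direction of $Z_{\Lambda_0}$ only in a degenerate coordinate-axis configuration of minimal vectors, again excluded by the non-parallelogram hypothesis. The main obstacle will be the rank-$\ge 2$ claim in the linearization step, and in particular handling the non-smooth case where one or more $v_i$ is a vertex of a polygonal $\partial B_\nu$; here one must work with polyhedral support cones and carry out a careful case analysis ruling out parallelogram-like degenerations.
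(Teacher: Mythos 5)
Your outline captures the right local picture (three pairs of minimal vectors with $v_1+v_2+v_3=0$, reduction of criticality near $\Lambda_0$ to the finitely many inequalities $\nu(gv_i)\ge\nu(v_i)$, and a linearization), but there is a genuine gap at exactly the point you flag as ``the main obstacle,'' and it cannot be postponed. Your rank-$\ge 2$ claim only bounds the \emph{tangent cone} to $\mathcal{K}_\nu(1)$ at $\Lambda_0$; a closed set with one-dimensional tangent cones need not be contained in a $\mathcal{C}^1$ curve (critical loci of planar convex domains can even have fractional Hausdorff dimension, see \cite{KRS}), so to get the submanifold you must actually construct it. The construction you sketch --- move $v_1$ along $\partial B_\nu$ and recover the critical lattice --- requires (a) that the boundary be $\mathcal{C}^1$ and (b) that each boundary point lie on a \emph{unique} critical lattice depending $\mathcal{C}^1$-smoothly on the point. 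Neither holds for a general non-parallelogram norm: hexagonal or $\ell^p$ balls already fail (a), and for reducible domains (b) fails as well. The ingredient your proof is missing is Mahler's reducibility theory, which is how the paper resolves this: every bounded symmetric convex domain $B$ contains an \emph{irreducible} subdomain $B'$ with $\Delta_{B'}=\Delta_B$; if $B$ is not a parallelogram then neither is $B'$ (otherwise $\Delta_{B'}=\area(B')/4<\area(B)/4\le\Delta_B$, a contradiction); and for irreducible non-parallelograms Mahler proves both that the boundary is $\mathcal{C}^1$ and that each boundary point lies on exactly one critical lattice. Since the $B$-critical locus is contained in the $B'$-critical locus, the curve only ever needs to be built for irreducible domains, where the implicit function theorem applies cleanly. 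Without this reduction, your proposed case analysis over ``polyhedral support cones'' would have to reprove a substantial piece of Mahler's theory.

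A second, smaller issue: your transversality verification is asserted rather than argued. The paper's proof parametrizes the critical locus as $M(t)\Z^2$ with $M(t)=[\vp(t),\vr(t)]$, chooses $\vp(t_0)$ of minimal angle among the six minimal vectors, and shows the lower-left entry of $M'(t_0)M(t_0)^{-1}$ is nonzero by a sign computation: convexity of $B$ and the inscribed-hexagon configuration force $b(t_0),d(t_0)>0$ while $b'(t_0)>0$ and $d'(t_0)<0$, which is incompatible with $\bigl(b'(t_0),d'(t_0)\bigr)$ being proportional to $\bigl(b(t_0),d(t_0)\bigr)$. This step does not follow from the non-parallelogram hypothesis in the way you suggest; it is a separate geometric argument and is where the actual work of ruling out tangency to $\Lie(P)$ happens.
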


  \begin{proof}[Proof of Theorem \ref{winning} assuming Theorem \ref{special-theorem-appendix}.]  Recall that we are given an arbitrary norm on $\R^2$ and need to prove that $\widehat D_\nu\subset \R$ is  absolute winning. The latter notion, as was mentioned in Remark \ref{haw},  is a one-dimensional version of the HAW property.
Theorem \ref{special-theorem-appendix} verifies  \equ{transversality} for norms whose unit balls are not parallelograms, thus the conclusion of Theorem \ref{winning} in this case follows from 
Theorem \ref{etds} and Proposition \ref{exceptional}.

It remains to  consider  the case 
{$\nu(\x)  = \lambda\|g^{-1}\x\|_\infty$ for some $g \in \SL_2(\R)$ and $\lambda \in \R_{>0}$.  
In this case the critical locus $\mathcal{K}_{\nu}(1)$ of $\nu$ coincides with $g\mathcal{K}_{\infty}(1)$.}
Recall  that, according to \eqref{critlocus}, $\mathcal{K}_{\infty}(1)$ is equal to
the union of two compact one-dimensional manifolds, namely $$\mathcal{K}_{\infty}(1) =     H\Z^2 \cup ({\theta}H{\theta}^{-1})
\Z^2=     H\Z^2 \cup {\theta}H
\Z^2
, \text{ where }
{\theta} = \left[ {\begin{array}{cc}
   0  & -1 \\
   1 & 0 \\
  \end{array} } \right].
$$
Therefore  $\mathcal{K}_{\nu}(1)$ can be written as  $Z_1  \cup Z_2$, where 
$$Z_1:=  gH\Z^2  = (gHg^{-1})g\Z^2 \text{ and }Z_2:= g\theta H\Z^2  = \big(g 
{\theta}H(g{\theta})
^{-1}
\big)g\Z^2,$$ i.e.\ it is the union of two closed orbits of the lattice $g\Z^2$ by subgroups conjugate to $H$. 

Let us start with $Z_1$ and consider two cases:
\begin{itemize}
\item  
If  $gHg^{-1}$ is not contained in $P$, then \equ{tangent} holds for $Z = Z_1$, which implies that $Z_1$ is $(F,H)$-transversal. Thus it follows from  Theorem \ref{etds} that the set
of $\alpha\in\R$ such that 
\eq{nolimpts}{\text{there are no limit points of } \{a_{s}u_\alpha\Z^2 : s\ge 0\} \text{ in }Z_1}
is absolute winning.
\item    $gHg^{-1}\subset P$; this  happens if and only if $g = a_{s_0}h$ for some $s_0\in\R$ and $h\in H$; hence $Z_1 = a_{s_0}H\Z^2$. Clearly then \equ{nolimpts} is equivalent to the statement that 
\eq{nolimptsnew}{\text{there are no limit points of } \{a_{s}u_\alpha\Z^2 : s\ge 0\} \text{ in }H\Z^2.}
But \equ{nolimptsnew} is satisfied for any $\alpha\in\widehat D_\infty$, again  in view of  Proposition \ref{exceptional} and the description of the critical locus for the supremum norm.  Since $\widehat D_\infty$ is known to be absolute winning (as was mentioned in the introduction,   it contains the set $\BA$ which was shown to be absolute winning by McMullen \cite{Mc}), it follows that the set
of $\alpha\in\R$ satisfying \equ{nolimpts} is absolute winning in this case as well.
\end{itemize}
The argument taking care of $Z_2$ is identical, with $g$ replaced by $g\theta$. Using the intersection property of absolute winning sets, we conclude that the set
$$\widehat D_\nu = \big\{\alpha\in\R: \text{there are no limit points of } \{a_{s}u_\alpha\Z^2 : s\ge 0\} \text{ in }Z_1\cup Z_2\big\}$$
is absolute winning.
\end{proof}

\ignore{\begin{rem}\label{winningremark} \rm Using   methods of the paper \cite{KWe}, it is possible to replace the conclusion of Theorem \ref{winning} by a stronger one: namely, that the set $\widehat D_\nu$ is {\sl absolutely winning} in the sense of McMullen \cite{Mc}. Furthermore, in a recent work of the first-named author with An and Guan \cite{AGK} 
it is shown that in the conclusion of Corollary \ref{etds}, and hence also of Theorem \ref{euclidean}, thickness can be replaced by the {\sl hyperplane absolute winning (HAW)} property. The latter was defined in \cite{BFKRW} for subsets of $\R^n$ and  adapted to subsets of smooth manifolds in \cite{KWe}. Among the features of this stronger property is invariance under diffeomorphisms and stability under taking countable intersections.
\end{rem}}




\section{The targets in the upper half-plane}\label{h2}
For the remaining part of the paper we will
only consider the Euclidean norm on $\R^2$. 
To simplify notation from now on we will drop the ``Euclidean'' subscript $2$ whenever it does not cause confusion, that is, denote by $B(r)$ the Euclidean ball in $\R^2$ of radius $r$ centered at $0\in \R^2$, and by ${\mathcal K}(r)$, $r \le 1$, the subsets of  the space $X = \ggm$ of unimodular lattices in $\R^2$
given by
 \eq{k2}{{\mathcal K}(r) =  \left\lbrace \Lambda \in X: \Lambda \cap B\left(  r/{\sqrt{\Delta}}\right) = \{0\}\right\rbrace,}
where $\Delta = \sqrt{3}/2$. Then we will have 
$$D(\psi) = \{\alpha\in\R: a_s\Lambda_\alpha\notin {\mathcal K} \big(r(s)\big) \text{ whenever $s$ is large enough}\}.$$ 
Since the norm $\|\cdot\|$ is rotation-invariant, so are the sets \equ{k2} for any $r$. 
Furthermore, with the notation $K := \SO(2)$, we see that  the critical locus ${\mathcal K}(1)$ is the $K$-orbit of a single  lattice, namely the hexagonal lattice inscribed in a disk of radius $1/\sqrt{\Delta} = \sqrt{2/\sqrt{3}}$. In other words, \eq{critical}{{\mathcal K}(1) = Kg_0\Z^2,\text{ where }g_0:=\left[ {\begin{array}{cc}
   1/\sqrt{\Delta} & 1/2\sqrt{\Delta } \\
   0 & \sqrt{\Delta} \\
  \end{array} } \right] \in G.} See \cite[page 32]{ca} for a proof.

In view of the  rotational invariance of the problem it is natural to  move it to the quotient space of $G$ by $K$, that is, to the hyperbolic plane. Let $\Hyp$ denote the half-plane  {of complex numbers $z=x+iy$ with $y>0$.
We identify the tangent space $T\Hyp$ with $\Hyp \times \mathbb{C}$ and give it the Riemannian metric $\frac{dx\otimes dx + dy\otimes dy}{y^2}$. 
$T^1\Hyp$ is the set of unit tangents vectors, explicitly given as $(x+iy, \xi_1+i\xi_2)$ with $\frac{\xi_1^2+\xi_2^2}{y^2}=1$}.
{The M\"obius action of $G$ on $\Hyp$ is defined as
$$gz = \left[ {\begin{array}{cc}
   a & b \\
   c & d \\
  \end{array} } \right]z := \frac{az+b}{cz+d}
$$
and is an isometry in this metric. }
{Thus we have 
an induced left action of $G$ on $T^1\Hyp$ given by $$g(z,\xi) := 
 \left(\frac{az+b}{cz+d}, \frac{\xi}{(cz+d)^2}\right).$$
The action is in fact transitive and  (up to the subgroup of index $2$) free.}
\smallskip

In order to make use of the left $K$-invariance of ${\mathcal K}(r)$, we work with the following right actions $T^1\Hyp\curvearrowleft G$ (and also $\Hyp \curvearrowleft G$):
$$\left(z, \xi\right)\cdot g 
 :=
g^{-1}
\left(z, \xi\right),
\quad z\cdot g := g^{-1}z.
$$
We use these actions to obtain a bi-equivariant double cover $\phi:G \to T^1\Hyp$: $\phi(g)=(i,i)\cdot g$. Moreover, $\phi$ descends to a diffeomorphism, which we will also denote by $\phi$, of the left $G$-spaces $X$ and $T^1\Hyp/\Gamma$, which is a circle bundle ({away from two points}) over the  {manifold} $\Sigma := \Hyp/\Gamma.$ With some abuse of notation, let us denote by $\eta$ (resp.\ $\pi$) all the projections to quotients  by $\Gamma$ (resp.\ from tangent bundles to base spaces). We thus have the following commuting diagram:

\begin{equation}\label{diagram-chase-to-upperhalfplane}
\begin{tikzcd}
   G   \arrow[r,"\phi"] \arrow[d, "\eta"] & T^1\Hyp \arrow[d,"\eta"]  \arrow[r, "\pi"] & \Hyp \arrow[d , "\eta"] \\
  X  \arrow[r, "\phi"]  & (T^1\Hyp)/\Gamma  \arrow[r,"\pi"] & \Sigma
\end{tikzcd}
\end{equation}

Our goal now is to describe the sets $D(\psi)$
dynamically by restating Proposition \ref{danicorr} in the language of hyperbolic geometry. 
We shall identify the subsets ${\mathcal K} (r)$ of $X$ with their images under $\phi$; their rotation-invariance implies that ${\mathcal K} (r) = \pi^{-1}\Big(\pi\big({\mathcal K} (r)\big)\Big)$ for any $r$.
Furthermore,  let us put ${(z_0,\xi_0) := \phi(g_0) = (i,i)\cdot g_0,}$ where $g_0$ is as in \equ{critical}. Then \eq{z0}{z_0 := g_0^{-1}i = -\tfrac12 + i\tfrac{\sqrt{3}}2 \in \Hyp,}
and \equ{critical} can be used to describe the $\phi$-image of the critical locus ${\mathcal K} (1)$ in  $T^1\Hyp/\Gamma$ as
$$
{\mathcal K}(1) = \pi^{-1} \big(\eta(z_0)\big).
$$

\begin{equation}\label{fundamental-domain-Gamma}
\begin{tikzpicture}[scale=1.5, baseline=(current  bounding  box.center)]
\draw [black,<->] (-2,0) -- (2,0);
\draw [black, ->] (0,0) -- (0,3.00);
\draw [blue, thick] (-1/2,1.7320/2) -- (-1/2,3);
\draw [blue, thick, dotted] (1/2,1.7320/2) -- (1/2,3);
\draw [domain=0:0.5, smooth, variable=\x, olive, thick, dotted] plot ({\x},{sqrt(1-(\x)^2)});
\draw [domain=-0.5:0, smooth, variable=\x, olive, thick] plot ({\x},{sqrt(1-(\x)^2)});
\draw [fill, black] (-0.5,1.7320/2) circle [radius=0.05];
\node [below] at (-0.5,1.7320/2) {\tiny $\eta(z_0)$};
\node [below=0.2cm, align=flush center,text width=8cm]
        {\footnotesize
            The projection of the critical locus to $\Sigma$
        };
\end{tikzpicture}
\end{equation}

\smallskip
In what follows it will be useful to consider the preimage  $\eta^{-1}\big({\mathcal K} (r)\big)$ of ${\mathcal K}(r)$ in $T^1\Hyp$ as well as in $G$. 
We will use the notation $\widetilde{\mathcal K}(r)$ for both of these sets, context making clear which is in use. 
The above observations imply that \eq{z0orbit}{\pi\big(\widetilde {\mathcal K}(1)\big) = \text{ the $\Gamma$-orbit of }z_0{\text{ in }\Hyp}.}

Now take $\alpha\in\R$ and observe   that $\phi$ sends $
{u_\alpha = }  \left[ {\begin{array}{cc}
   1 & \alpha \\
   0 & 1 \\
  \end{array} } \right]$ to $(-\alpha + i, i)$. {In other words}, $$\phi(\Lambda_\alpha) = \eta\big((-\alpha + i, i)\big)$$
  lies on the closed horocycle on $T^1 \Hyp/\Gamma$ passing through $\eta(i,i)$.  Furthermore, the action of 
 ${a_s = \left[ {\begin{array}{cc}
   e^{s}  & 0 \\
   0 & e^{-s } \\
  \end{array} } \right] 
  }$ on $G$ and on $X$ 
 translates into the {(negative time direction)} geodesic flow on $T^1 \Hyp$. 
 That is, 
\eq{traj_1}{\phi(a_s\Lambda_\alpha) = \eta\big((-\alpha + e^{-2s}i, e^{-2s}i)\big).}
We have arrived at the following geometric restatement of  Proposition \ref{danicorr} for the case of Euclidean norm on $\R^2$:

\begin{prop}\label{danicorr2d}For any non-increasing continuous $\psi$, let $r(\cdot)$ be the unique function related to $\psi$ via \eq{dani2d}{r \left(\tfrac{1}{2}\ln\big(\tfrac t{\psi(t)}\big)\right) =\sqrt{t\psi(t)},}
which is the $m=n=1$ case of \equ{dani}. Then 
\eq{psi2complement}{\begin{aligned}\alpha \in D(\psi)^c & \iff a_s\Lambda_{\alpha} \in {\mathcal K}\big(r(s)\big) \text{ for an unbounded set of } s > 0\\
& \iff  -\alpha + e^{-2s}i \in \pi\left(\widetilde {\mathcal K}\big(r(s)\big)\right) \text{ for an unbounded set of }s > 0.\end{aligned}}\end{prop}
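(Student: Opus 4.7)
The plan is to derive Proposition \ref{danicorr2d} from Proposition \ref{danicorr} (specialized to $m=n=1$ and $\nu=\|\cdot\|_2$) by a straightforward diagram chase through \eqref{diagram-chase-to-upperhalfplane}, making essential use of the formula \eqref{traj_1} for the translated lattice and the rotation-invariance of the targets $\mathcal{K}(r)$.

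First, setting $m=n=1$ in \equ{dani} recovers precisely \equ{dani2d}, so the first equivalence in \equ{psi2complement} is simply the logical negation of Proposition \ref{danicorr}: a property holds for all sufficiently large $s$ if and only if its complement holds on an unbounded subset of $(0,\infty)$. This takes care of the first ``$\iff$''.

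For the second equivalence, I would interpret the condition $a_s\Lambda_\alpha \in \mathcal{K}\big(r(s)\big)$ through the identification $\phi:X \to T^1\Hyp/\Gamma$. By \equ{traj_1}, $\phi(a_s\Lambda_\alpha) = \eta\big((-\alpha + e^{-2s}i,\,e^{-2s}i)\big)$. Because the Euclidean norm is $K$-invariant, each $\mathcal{K}(r)$ is rotation-invariant in $X$, so its image under $\phi$ satisfies $\mathcal{K}(r) = \pi^{-1}\big(\pi(\mathcal{K}(r))\big)$ in $T^1\Hyp/\Gamma$. Hence membership in $\mathcal{K}(r)$ depends only on the basepoint in $\Sigma$, which for $\phi(a_s\Lambda_\alpha)$ is $\eta(-\alpha + e^{-2s}i)$. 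Consequently $a_s\Lambda_\alpha \in \mathcal{K}\big(r(s)\big)$ is equivalent to $\eta(-\alpha + e^{-2s}i) \in \pi\big(\mathcal{K}(r(s))\big)$.

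Finally I would translate this last condition back up to $\Hyp$. By definition $\widetilde{\mathcal{K}}(r) = \eta^{-1}\big(\mathcal{K}(r)\big) \subset T^1\Hyp$, and the commutativity of \eqref{diagram-chase-to-upperhalfplane} together with the surjectivity of $\eta$ and $\pi$ gives $\pi\big(\widetilde{\mathcal{K}}(r)\big) = \eta^{-1}\big(\pi(\mathcal{K}(r))\big)$ in $\Hyp$. Thus $\eta(-\alpha + e^{-2s}i) \in \pi\big(\mathcal{K}(r(s))\big)$ if and only if $-\alpha + e^{-2s}i \in \pi\big(\widetilde{\mathcal{K}}(r(s))\big)$, which is the desired second equivalence. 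There is no real obstacle to this argument: it is pure bookkeeping once one keeps track of the two projections in the diagram. The only point requiring a moment's care is the identity $\pi(\widetilde{\mathcal{K}}(r)) = \eta^{-1}(\pi(\mathcal{K}(r)))$, whose routine verification is a direct consequence of the commutation relation $\eta \circ \pi = \pi \circ \eta$.
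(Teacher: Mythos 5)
Your argument is correct and follows essentially the same route as the paper, which presents Proposition \ref{danicorr2d} as an immediate consequence of Proposition \ref{danicorr}, formula \equ{traj_1}, and the $K$-invariance of the sets ${\mathcal K}(r)$. The only small imprecision is in the last step: the identity $\pi\big(\widetilde{\mathcal K}(r)\big) = \eta^{-1}\big(\pi({\mathcal K}(r))\big)$ requires not just commutativity and surjectivity but also the fiber-saturation ${\mathcal K}(r) = \pi^{-1}\big(\pi({\mathcal K}(r))\big)$ for the reverse inclusion --- an ingredient you do invoke earlier, so nothing is actually missing.
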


This  enables us to easily answer Questions (iii) and (iv) from the introduction for the case of the Euclidean norm on $\R^2$, and to lay a crucial groundwork for our approach to Question (v).

\begin{proof}[Proof of Theorem \ref{dtpsieuclcritical}.]  When $\psi = \psi_1$, $r(s)$ becomes the constant function $r(s)\equiv 1$.
Thus, in view of \equ{z0orbit} and \equ{psi2complement}, $\alpha \in D (\psi_1)^c$ if and only if the ray  $\{-\alpha + e^{-2s}i : s > 0\}$ hits the $\Gamma$-orbit of $z_0$ for an unbounded set of $s$.
However an elementary computation using  \equ{z0} shows that for $\begin{pmatrix}a & b\\c & d\end{pmatrix}\in\Gamma$ one has $$\RP\left( \frac{az_0 + b}{cz_0 + d}\right) = \frac{ac + bd - \frac{ad+bc}2}{c^2 - cd + d^2}\in\Q.$$ 
Therefore $\alpha\in \Q$, which implies that the trajectory \equ{traj_1} diverges in $X$, thus cannot return to a compact set infinitely many times.
\end{proof}

\begin{proof}[Proof of Theorem \ref{dtpsieucl1}.] \ignore{By equation \equ{psinudani} and the above diagram we have, \begin{equation}\label{psi2complement}\begin{split}\alpha \in D (\psi)^c & \iff a_s\Lambda_{\alpha} \in {\mathcal K} \big(r(s)\big) \text{ for an unbounded set of } s\\
& \iff -\alpha + e^{-2s}i \in \pi\big({\mathcal K} (r(s)\big) \text{ for an unbounded set of }s \end{split}\end{equation}
Note, the last condition, which is viewed in $\Hyp$, follows from the left $\SO(2)$-invariance and right $\SL_2(\Z)$-invariance of ${\mathcal K}(r)$.
\smallskip
}
Let $\psi$ be any continuous, non-increasing function  {satisfying \equ{smaller}}.
Then, in view of \equ{dani2d}, $r(s)$ is strictly less than $1$ for all large enough $s$, whence $\pi\left(\widetilde {\mathcal K} \big(r(s)\big)\right)$ is a set whose interior contains the $\Gamma$-orbit of $z_0$. To show $D(\psi)^c\neq \varnothing$ in this case, we use the simple observation that the set of real parts of $\{\gamma z_0 : \gamma \in \Gamma\}$ is dense in $\R$.
We may thus choose, inductively, a sequence $(\gamma_k) \subset \Gamma$ along with rectangular neighborhoods $U_k = A_k\times B_k$ of $\gamma_kz_0$ such that 
$\overline{A_{k+1}} \subset A_{k}$ and $U_k \subset \pi\left(\widetilde {\mathcal K}\big(r(s_k)\big)\right)$ for every $k$, where $s_k$ is defined by $e^{-2s_k}=\im(\gamma_kz_0)$. Then 
$s_k\to\infty$, {and \equ{psi2complement} shows that any element of $\bigcap A_n$ belongs to $D_2(\psi)^c$.}  
Hence the latter set is non-empty. 
In fact, a `Cantor set' type argument will show that  $D_2(\psi)^c$ is 
uncountable.
\end{proof}

The rest of the paper is devoted to proving Theorem \ref{dtpsieucl2}, which, in view of Proposition~\ref{danicorr2d}, deals with geodesics in $\Hyp{/\Gamma}$  visiting a nested sequence of sets $\pi\big( {\mathcal K}(r) \big)$, which as  $r\to 1$ converge to $\pi\big( {\mathcal K}(1) \big)= \eta(z_0)$. The goal of the remaining part of this section is to show  that the sets $\pi\big( {\mathcal K}(r) \big)$ with $r$ sufficiently close to $1$ can be efficiently approximated by small balls centered at $\eta(z_0)$:

{\begin{prop}\label{estimates}
The exist positive constants $c_0$ and $c_0'$ such that for all small enough positive $\e$
\eq{ballestimates}{B_\Hyp\big(\eta(z_0),c_0\e\big) \subset \pi\big( {\mathcal K}(1-\e) \big) \subset B_\Hyp\big(\eta(z_0),c_0'\e\big).
}
\end{prop}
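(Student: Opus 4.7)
The plan is to work locally in $\Hyp$ near the lift $z_0=-\tfrac12+i\tfrac{\sqrt 3}{2}$ and to show that the pullback of the normalized shortest-vector function $\delta := \delta_2$ under $\eta$ decays \emph{linearly} (rather than quadratically) in every direction from $z_0$. Since $\Gamma$ acts discretely on $\Hyp$ and $\pi\bigl(\widetilde{\mathcal K}(1)\bigr)$ equals the $\Gamma$-orbit of $z_0$ by \equ{z0orbit}, for all sufficiently small $\e>0$ the preimage $\eta^{-1}\bigl(\pi(\mathcal K(1-\e))\bigr)\subset\Hyp$ is a disjoint union of small neighborhoods of the $\Gamma$-translates of $z_0$. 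It therefore suffices to produce the asserted two-sided inclusion for balls around $z_0$ itself; and since Euclidean and hyperbolic metrics are comparable near any interior point of $\Hyp$, I will work throughout with Euclidean balls, converting at the end.

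Near $z_0$ one may parametrize points of $\Sigma$ by $\tau=x+iy\in\Hyp$, with the corresponding lattice (up to orientation-preserving isometry) taken to be $\Lambda_\tau := y^{-1/2}(\Z+\tau\Z)$, so that $\delta(\Lambda_\tau)^2 = \Delta\cdot\min_{(m,n)\ne(0,0)}|m+n\tau|^2/y$. At $\tau=z_0$ the minimum is attained precisely at $(m,n)\in\{(\pm 1,0),(0,\pm 1),(\pm 1,\pm 1)\}$, yielding on a small neighborhood $U$ of $z_0$ the identity $\delta(\Lambda_\tau)^2=\Delta\min_i f_i(\tau)$, where
\begin{equation*}
f_1(\tau)=\frac 1y,\qquad f_2(\tau)=\frac{|\tau|^2}{y},\qquad f_3(\tau)=\frac{|\tau+1|^2}{y},
\end{equation*}
each equal to $2/\sqrt 3 = 1/\Delta$ at $z_0$. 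A direct differentiation then yields
\begin{equation*}
\nabla f_1(z_0)=\bigl(0,-\tfrac 43\bigr),\qquad \nabla f_2(z_0)=\bigl(-\tfrac{2}{\sqrt 3},\tfrac 23\bigr),\qquad \nabla f_3(z_0)=\bigl(\tfrac{2}{\sqrt 3},\tfrac 23\bigr):
\end{equation*}
three vectors of common Euclidean length $\tfrac 43$, summing to zero, and arranged at angular separation $120^{\circ}$.

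The symmetric arrangement of these gradients is the crux of the argument: for every unit vector $u\in\R^2$, the quantity $\min_i\langle\nabla f_i(z_0),u\rangle$ lies in the compact interval $[-\tfrac 43,-\tfrac 23]$, which is bounded away from zero. A Taylor expansion then shows that, on a possibly smaller neighborhood of $z_0$,
\begin{equation*}
-C_1\,\|\tau-z_0\|\le\min_i f_i(\tau)-\tfrac{2}{\sqrt 3}\le -C_2\,\|\tau-z_0\|
\end{equation*}
for positive constants $C_1\ge C_2$. Translating the condition $\delta(\Lambda_\tau)\ge 1-\e$ via $(1-\e)^2=1-2\e+O(\e^2)$ converts this into $c_1\e\le\|\tau-z_0\|\le c_2\e$ for appropriate constants and all small $\e$, and passing back to hyperbolic balls yields the proposition. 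The main obstacle lies in verifying this linear decay estimate: had $z_0$ been a non-degenerate smooth maximum of $\delta$, the level sets $\{\delta\ge 1-\e\}$ would have been comparable to balls of radius $\sqrt\e$ instead, which would be too weak for the Borel--Cantelli application in the next section. It is the three-fold symmetry of the hexagonal lattice, expressed through the balanced $120^{\circ}$ arrangement of the equal-length gradients $\nabla f_i(z_0)$, that forces the decay of $\delta$ at $z_0$ to be only linear.
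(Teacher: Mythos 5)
Your proof is correct, and it takes a genuinely different route from the paper's. The paper first proves a lemma identifying $\pi\big(\mathcal K(r)\big)$ with $\eta\big(\{z\in D:\im z\le\Delta/r^2\}\big)$, and then computes the hyperbolic distance from $z_0$ to the nearest and farthest points of this region using the identity $\sinh\big(d(z,w)/2\big)=|z-w|/\big(2\sqrt{\im z\,\im w}\big)$, differentiating both expressions at $r=1$. Your argument instead works directly with the normalized shortest-vector function $\delta^2 = \Delta\min_i f_i$ in the local parametrization $\tau\mapsto y^{-1/2}(\Z+\tau\Z)$, and shows that the three gradients $\nabla f_i(z_0)$ of equal length arranged at $120^\circ$ force every directional derivative of $\min_i f_i$ at $z_0$ to lie in $[-\tfrac43,-\tfrac23]$, hence a two-sided linear decay. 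What your approach buys is precisely the conceptual explanation at the end: the linear (rather than quadratic) decay at the maximum of $\delta$ is forced by the balanced hexagonal configuration of minimal vectors, which is the structural reason why the ball radii scale like $\e$ and not $\sqrt\e$ --- the paper's computation gets the same conclusion but leaves this mechanism implicit.

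Two remarks worth noting. First, the sentence ``converts this into $c_1\e\le\|\tau-z_0\|\le c_2\e$'' is loose: what you actually establish, and what you use, is the two-sided inclusion $\{\|\tau-z_0\|\le c_1\e\}\subset\{\delta(\Lambda_\tau)\ge 1-\e\}\subset\{\|\tau-z_0\|\le c_2\e\}$; the displayed inequalities as written describe an annulus rather than the ball sandwich, so that phrasing should be corrected. Second, your argument inadvertently catches a small arithmetic slip in the paper. Your computation gives the near boundary at hyperbolic distance $\sim 2\e$ (in the direction $-\nabla f_i$ for a single $i$) and the far boundary at $\sim 4\e$ (in a direction bisecting two gradients), corresponding to directional derivatives $-\tfrac43$ and $-\tfrac23$ in Euclidean scale. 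One can verify this directly from the paper's fundamental-domain description: the corner point $-\sqrt{1-3/(4r^4)}+i\Delta/r^2$ lies at Euclidean distance $\approx 2\sqrt3\,\e$ from $z_0$, hence hyperbolic distance $\approx 4\e$, so the derivative of $\ln 2+\ln\big(r^2-\sqrt{r^4-3/4}\big)$ at $r=1$ is $-4$, not $-2$ as stated in the paper. This does not affect the truth of the proposition (one simply needs $c_0<2$ and $c_0'>4$), but your bound of $4\e$ for the outer ball is in fact sharp, whereas the paper's claimed $c_0'>2$ is not quite enough as written.
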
}

Here and hereafter by $B_\Hyp(z,\rho)$ we will mean the $\rho$-ball  centered at $z$  either in $\Hyp$ (with respect to the hyperbolic metric) or in $\Sigma$ (with respect to the induced quotient metric on $\Sigma$).

To prove this, we give a much more precise description of our shrinking targets projected to the modular surface $\Sigma$:
\begin{lem}
 Let $D$ denote the fundamental domain illustrated in diagram \eqref{fundamental-domain-Gamma}.
 In the notation of diagram \eqref{diagram-chase-to-upperhalfplane}, for any $r \le 1$ we have\begin{equation}\pi\big(\mathcal{K}(r)\big) = \eta\left(\left\lbrace z \in D: \im{z}\leq  {\Delta}/{r^2} \right\rbrace\right).
 \end{equation}
\end{lem}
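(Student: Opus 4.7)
My plan is to reduce the claim to the standard calculation of the length of the shortest nonzero vector of a unimodular lattice in $\R^2$ in terms of its representative in the modular fundamental domain $D$.

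Observe first that the length function $\lambda_1(\Lambda) := \inf_{v\in\Lambda\smallsetminus\{0\}}\|v\|$ on $X$ is invariant under the left $K$-action, since the Euclidean norm is rotation-invariant. Consequently, the set $\mathcal{K}(r) = \{\Lambda : \lambda_1(\Lambda)\ge r/\sqrt{\Delta}\}$ is a union of $K$-orbits, and $\lambda_1$ descends to a function $\bar\lambda_1$ on $\Sigma$; in the notation of diagram \eqref{diagram-chase-to-upperhalfplane}, one therefore has
$$\pi\big(\mathcal{K}(r)\big) = \bigl\{\eta(z) : z\in D,\ \bar\lambda_1\bigl(\eta(z)\bigr)\ge r/\sqrt{\Delta}\bigr\}.$$
It thus suffices to establish the identity $\bar\lambda_1\bigl(\eta(z)\bigr) = 1/\sqrt{\operatorname{Im}(z)}$ for every $z = x+iy \in D$.

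To this end I would lift $z$ via the Iwasawa decomposition: set $\tilde g := \bigl(\begin{smallmatrix}1/\sqrt{y} & -x/\sqrt{y} \\ 0 & \sqrt{y}\end{smallmatrix}\bigr) \in G$, which satisfies $\tilde g^{-1}i = z$, so that $\tilde g\,\Z^2$ is a lattice in the fiber of $\pi\circ\phi:X\to\Sigma$ above $\eta(z)$. Its nonzero vectors are $\bigl(\tfrac{m - nx}{\sqrt y},\,n\sqrt y\bigr)$ for $(m,n)\in\Z^2\smallsetminus\{0\}$, with squared Euclidean norm $\tfrac{(m-nx)^2+n^2y^2}{y}$. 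I then verify that this squared norm is at least $1/y$ for every such $(m,n)$, with equality at $(m,n) = (\pm 1, 0)$, by a short case analysis: for $n=0$ the bound is immediate; for $|n|=1$, since $|x|\le 1/2$ the minimum over $m$ is attained at $m=0$ and equals $|z|^2/y \ge 1/y$ because $|z|\ge 1$ on $D$; for $|n|\ge 2$ the contribution $n^2 y \ge 4y \ge 2\sqrt{3}$ already exceeds $1/y \le 2/\sqrt{3}$ since $y \ge \sqrt{3}/2$ on $D$.

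Combining the above, $\bar\lambda_1\bigl(\eta(z)\bigr) \ge r/\sqrt{\Delta}$ is equivalent to $1/\sqrt{y}\ge r/\sqrt{\Delta}$, i.e.\ to $\operatorname{Im}(z) \le \Delta/r^2$, which is exactly the asserted description of $\pi\bigl(\mathcal{K}(r)\bigr)$. I do not anticipate any real obstacle here: once the $K$-invariance is invoked to pass to $\Sigma$, what remains is the classical geometry-of-numbers computation in the standard modular fundamental domain.
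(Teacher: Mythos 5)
Your proposal is correct and follows essentially the same route as the paper: both use the rotation-invariance of the Euclidean norm to pass to an upper-triangular (Iwasawa) representative over a point $z$ of the standard fundamental domain, and then carry out the classical geometry-of-numbers estimate showing every nonzero vector of that lattice has squared length at least $1/\operatorname{Im}(z)$, with equality attained. Your organization — proving the exact identity $\lambda_1 = 1/\sqrt{\operatorname{Im}(z)}$ once and reading off both inclusions, with the case split on $|n|$ in place of the paper's chain of algebraic inequalities — is a cosmetic rather than substantive difference, and your case analysis checks out.
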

\begin{equation*}\label{targets-in-domain}
\begin{tikzpicture}[scale=2.5, baseline=(current  bounding  box.center)]
\draw [black, <->] (0,0) -- (0,1.5);
\draw [blue, thick, ->] (-1/2,1.7320/2-1/2) -- (-1/2,1.5);
\draw [blue, thick, ->] (1/2,1.7320/2-1/2) -- (1/2,1.5);
\draw [domain=0:0.5, smooth, variable=\x, blue, thick] plot ({\x},{sqrt(1-(\x)^2)-1/2});
\draw [domain=-0.5:0, smooth, variable=\x, blue, thick] plot ({\x},{sqrt(1-(\x)^2)-1/2});
\draw [fill, black] (-0.5,1.7320/2-1/2) circle [radius=0.05];
\draw [red, thick, dotted, <->] (-1,0.97-1/2) -- (1,0.97-1/2); 
\node[below] at (1,1-1/2) {\tiny $y = \Delta/r^2$};
\node [below] at (-0.5,1.70/2-1/2) {\tiny $z_0$};
\node [below=0.4cm, align=flush center,text width=8cm]
        {\footnotesize
            The targets $\pi\big(\mathcal{K}(r)\big)$ are images of points in the fundamental domain below the red line.
        };
\end{tikzpicture}
\end{equation*}
\begin{proof}
We begin by showing that the first set is contained in the second. 
Choose any lattice $\Lambda \in \mathcal{K}(r)$ and, further, $g\in G$ such that $\Lambda = g\Z^2$ and such that $i\cdot g\in D$.
It suffices to show $\im{(i\cdot g)} \leq {\Delta}/{r^2}.$

In light of the $\SO(2)$-invariance of the sets $\mathcal{K}(r)$, we may as well assume 
\begin{equation}\label{upper-triangular}
g = \left[ {\begin{array}{cc}
   a & b \\
   0 & 1/a \\
  \end{array} } \right]
  \end{equation}
  is upper triangular.
The fact that $g\Z^2 \in \mathcal{K}(r)$ implies $a^2  = \left\|g\begin{pmatrix}1\\ 0\end{pmatrix}\right\|^2\geq  \dfrac{r^2}{\Delta}$, from which we can conclude $\im{(i\cdot g)}  = \dfrac1{a^2} \leq  \dfrac{\Delta}{r^2}.$

For the other containment, let $z = i\cdot g \in D$ with $\im{z} \leq {\Delta}/{r^2}$.
Again we can assume $g$ is as in \eqref{upper-triangular}, and we are left with showing that the lattice $\Lambda = g\Z^2$ belongs to $\mathcal{K}(r)$.
 The assumption $i\cdot g 
= \dfrac{\frac1a i - b}a\in D$ implies 
\begin{equation}\label{fd-condition}
    \left|\frac{b}a\right| \leq \frac{1}{2} \text{ and } \frac{b^2}{a^2} + \frac{1}{a^4} \geq 1 
\end{equation}(the second inequality follows since   $z\in D$ implies $|z| \ge 1$).
We take a non-zero integer vector $\vv = \begin{pmatrix} m\\n\end{pmatrix}$ and compute its squared norm as
\begin{equation}
\begin{split}
    (ma + nb)^2 + n^2/a^2 = m^2a^2 + 2mnab + n^2\left(b^2 + \frac{1}{a^2}\right) \\
    \geq m^2a^2 + 2mnab + n^2a^2 = a^2\left(m^2 + 2mn\frac{b}{a} + n^2\right) \\
    \geq a^2\left(m^2 - 2|mn|\left|\frac{b}{a}\right| + n^2\right) \geq a^2\left(m^2 - |mn| + n^2\right) \geq a^2.
    \end{split}
\end{equation}
In addition, we have $a^2\geq  {r^2}/{\Delta}$, since we assumed $\im{z} \leq {\Delta}/{r^2}$, hence $\|\vv\|\ge r/\Delta$, which finishes the proof.
\end{proof}

\begin{proof}[Proof of Proposition \ref{estimates}]
Since the hyperbolic distance on $\Hyp$ satisfies the identity
\begin{equation*}
\sinh\left(\frac{d_{\Hyp}(z,w)}{2}\right) = \frac{|z-w|}{2\sqrt{\operatorname{Im}(z)\operatorname{Im}(w)},}
\end{equation*}
we see that the distance from $z_0$ to the point $(-1/2+x) + i\Delta/r^2$ increases as $|x|$ increases.
Hence, when $r$ is sufficiently close to $1$ so that $\Delta/r^2 <1$, we get the following estimates for $\pi\big( {\mathcal K}(r) \big)$ by computing the distance from $z_0$ to the point $-1/2 + i\Delta/r^2$ and to the point of intersection of the unit circle with the line $y=\Delta/r^2$:
\begin{equation*}
    B_\Hyp\big(\eta(z_0),-\ln(r^2)\big) \subset \pi\big( {\mathcal K}(r) \big) \subset B_\Hyp\Big(\eta(z_0), \ln 2 + \ln\left(r^2-\sqrt{r^4-3/4}\right)\Big).
\end{equation*}
Note that $$\left.\frac{d\big(-\ln(r^2)\big)}{dr}\right\vert_{r=1} = \left.\frac{d\big( \ln 2 + \ln\left(r^2-\sqrt{r^4-3/4}\big)\right)}{dr}\right\vert_{r=1} = -2.$$

Thus we may choose any $c_0 < 2 < c_0'$ to guarantee \equ{ballestimates} for small enough positive $\varepsilon$.
\end{proof}

   \section{A zero-one law on the space of lattices}\label{sectionmau}
  
We use the following theorem of Maucourant to obtain a zero-one law in the space of lattices.
   
\begin{thm}[\cite{mau}]\label{mau1}  Let $\big(B_\Hyp(p,r_t)\big)_{t\geq 0}$ be a shrinking family of balls with radius $r_t$ in $V$, a finite volume 
hyperbolic manifold with Liouville measure 
$\mu$ on its unit tangent bundle $T^1V$.
Let $\pi$ be the projection from $T^1V$ to $V$, and let $\gamma_t$ denote the geodesic action of $\mathbb{R}$ on $T^1V$. 
Then for $\mu$-almost every (resp.\ $\mu$-almost no) $v \in T^1V$, the set 
   \begin{equation}
   \left\lbrace t\geq0 : \pi(\gamma_t v) \in  B_\Hyp(p,r_t)\right\rbrace
   \end{equation}
   is unbounded (resp.\ bounded) provided  $\int_{0}^{\infty}r_t\,dt$ diverges (resp.\ converges). $\square$ \end{thm}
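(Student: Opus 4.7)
The plan is to view Theorem \ref{mau1} as a dynamical Borel--Cantelli lemma for the geodesic flow and to prove it by combining the elementary half of Borel--Cantelli (for the convergence direction) with quasi-independence coming from exponential mixing (for the divergence direction). I will assume throughout that $r_t$ is non-increasing and $r_t\to 0$, the complementary case being trivial since the trajectory a.s.\ returns to any fixed open set infinitely often.

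The first step is a discretization. For $n\in\mathbb{N}$ set
$$A_n := \{v\in T^1V : \pi(\gamma_t v)\in B_\Hyp(p,r_n)\text{ for some }t\in[n,n+1]\}.$$
A short Fubini-type computation, using that $\mu$ is $\gamma_t$-invariant and that a unit-speed geodesic spends time comparable to $r_n$ inside a ball of radius $r_n$, gives $\mu(A_n)\asymp r_n$ when $p$ lies in the thick part of $V$ (the scaling $r_n$ rather than $r_n^{\dim V}$ is exactly why the hypothesis of the theorem involves $\int r_t\,dt$). Hence $\sum_n\mu(A_n)$ and $\int_0^\infty r_t\,dt$ converge together, and the convergence direction is immediate from the first Borel--Cantelli lemma applied to $(A_n)$.

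For the divergence direction the crucial input is \emph{exponential mixing} of the geodesic flow on $T^1V$: there exist $\kappa>0$ and $k\in\mathbb{N}$ such that for smooth compactly supported $f,g$,
$$\left|\int (f\circ\gamma_t)\,g\,d\mu - \frac{1}{\mu(T^1V)}\int f\,d\mu\int g\,d\mu\right|\ \le\ C\,\|f\|_{C^k}\|g\|_{C^k}\,e^{-\kappa t},$$
a consequence of the spectral gap for the Laplacian on $V$. I would approximate $\chi_{A_n}$ from above and below by smooth bumps of scale slightly smaller than $r_n$, apply the mixing estimate to pairs $(\chi_{A_m},\chi_{A_n})$ with $n-m$ large, and extract a quasi-independence bound
$$\mu(A_m\cap A_n)\ \le\ \mu(A_m)\mu(A_n)+(\text{error decaying in }n-m).$$
Combined with $\sum_n\mu(A_n)=\infty$, the Chung--Erd\H{o}s / Kochen--Stone inequality yields $\mu(\limsup A_n)>0$, and Moore ergodicity of $\gamma_t$ promotes this to full measure.

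The main obstacle is the clash between smoothing and mixing: the $C^k$-norm of any bump approximating $\chi_{A_n}$ blows up as $r_n\to 0$, so I would choose the smoothing scale as a small fractional power of $r_n$ and verify that the exponential factor $e^{-\kappa(n-m)}$ still dominates the smoothing loss — equivalently, restricting the mixing input to pairs $(m,n)$ with $n-m$ at least a logarithmic function of $1/r_n$ and estimating the ``close'' pairs by a direct geometric bound. A secondary technical issue is uniformity of the mixing constants when $V$ is non-compact (as in the intended application $V=\Sigma$): one may take $p$ in the thick part, but the bump functions must be supported in a compact set controlled against cusp excursions of $\gamma_t v$, which is handled using Sullivan's logarithm law to truncate the dynamics outside a sufficiently large compact region.
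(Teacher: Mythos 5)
First, a point of comparison: the paper does not prove this statement. Theorem \ref{mau1} is quoted from Maucourant \cite{mau} without proof (hence the $\square$ at the end of the statement); the only work done around it here is Corollary \ref{Cor-of-mau}, which transfers the result from a torsion-free finite-index quotient to the modular orbifold. So your sketch has to be judged against Maucourant's argument and on its own terms. Your architecture --- discretizing to tube sets $A_n$ with $\mu(A_n)\asymp r_n$ (correct for surfaces; in a $d$-dimensional hyperbolic manifold the exponent is $d-1$, which is why the general form of Maucourant's condition is $\int r_t^{\,d-1}\,dt$, the statement above being the surface case actually used), first Borel--Cantelli for convergence, and quasi-independence plus Kochen--Stone plus ergodicity for divergence --- is the standard route of \cite{KM} and is sound in outline. (For the ergodicity upgrade you should note that the limsup set is forward-invariant under the flow because $r_t$ is non-increasing, hence invariant mod null.)

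The genuine gap is in the divergence half, at exactly the point you defer. For pairs $(m,n)$ with $n-m\lesssim\log(1/r_n)$ the mixing estimate is useless, since the $C^k$-norms of the smoothed indicators grow like a power of $1/r_n$; and the trivial bound $\mu(A_m\cap A_n)\le\min\big(\mu(A_m),\mu(A_n)\big)$ for these ``close'' pairs is not enough. For instance with $r_t=1/(t\log t)$ the integral diverges, the close pairs up to index $N$ contribute on the order of $\sum_{n\le N}\mu(A_n)\log(1/r_n)\asymp\log N$, and this swamps $\big(\sum_{n\le N}\mu(A_n)\big)^2\asymp(\log\log N)^2$, so Kochen--Stone yields nothing. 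The bound that does work is geometric: lift to $\Hyp$, observe that a geodesic realizing $A_m\cap A_n$ must pass within $r_m$ of $\tilde p$ and within $r_n$ of some $\gamma\tilde p$ with $d(\tilde p,\gamma\tilde p)\approx n-m$, that the Liouville measure of the set of geodesics meeting two balls of radii $r,r'$ at distance $D$ is $\asymp rr'e^{-D}$, and that there are $\asymp e^{D}$ such $\gamma$; summing gives $\mu(A_m\cap A_n)\lesssim\mu(A_m)\mu(A_n)$ uniformly over all pairs with $n-m$ bounded below. But this computation is the entire content of Maucourant's proof, it covers far and close pairs alike, and once you have it the exponential-mixing and smoothing machinery --- as well as your worries about cusp excursions, which are moot since all test functions are supported near the fixed point $p$ --- becomes superfluous. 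Either supply that estimate, at which point you have essentially reproduced Maucourant's elementary argument, or the plan as written has a hole at its crux.
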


We would like to restate this theorem 
according to our needs.

\begin{cor}\label{Cor-of-mau}
Let $\big(B_{\Hyp}(\eta(z_0),r_t)\big)_{t>0}$ be a family of shrinking balls in $\Hyp/\Gamma$ with respect to the quotient metric, with $z_0$ as in \equ{z0}. 
Then for Haar-almost every (resp.\ almost no) $g \in G/\Gamma$, 
\begin{equation}
    \big\lbrace t\geq 0 : a_tg \in (\pi\circ\phi)^{-1}\big( B_{\Hyp}(\eta(z_0),r_t)\big)  \big\rbrace
\end{equation}
is unbounded (resp.\ bounded) provided $\int_{0}^{\infty}r_t\,dt$ diverges (resp.\ converges).
 \end{cor}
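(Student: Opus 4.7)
The plan is to translate Maucourant's theorem directly via the identification $\phi:G/\Gamma \to T^1\Sigma$ from diagram \eqref{diagram-chase-to-upperhalfplane}, where $\Sigma = \Hyp/\Gamma$. Away from the two elliptic fixed points of $\Gamma$ on $\Hyp$ (a null set), $\phi$ is a diffeomorphism, and it pushes Haar measure on $G/\Gamma$ forward to a constant multiple of the Liouville measure $\mu$ on $T^1\Sigma$, since both measures are invariant and $G$-equivariant on corresponding covers. In particular, a property holds for Haar-a.e.\ $g \in G/\Gamma$ if and only if it holds for $\mu$-a.e.\ $v = \phi(g) \in T^1\Sigma$.

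Next I would verify that under this identification, the $a_t$-action on $G/\Gamma$ corresponds to the geodesic flow $\gamma_{2t}$ on $T^1\Sigma$ (up to choice of direction). This is visible from \equ{traj_1}: starting from $\phi(\Lambda_\alpha) = \eta\bigl((-\alpha + i,\,i)\bigr)$, applying $a_s$ transports the basepoint to $-\alpha + e^{-2s}i$ along the vertical geodesic, which covers hyperbolic distance $2s$. Consequently, for any $g$ and $v = \phi(g)$,
\[
a_t g \in (\pi\circ\phi)^{-1}\bigl(B_\Hyp(\eta(z_0),r_t)\bigr) \;\Longleftrightarrow\; \pi\bigl(\gamma_{2t}v\bigr)\in B_\Hyp\bigl(\eta(z_0),r_t\bigr).
\]

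Then I would reparametrize time by setting $\tilde r_s := r_{s/2}$. Under the substitution $s = 2t$, the set
\[
\{t \geq 0 : \pi(\gamma_{2t}v)\in B_\Hyp(\eta(z_0),r_t)\}
\]
is unbounded if and only if
\[
\{s \geq 0 : \pi(\gamma_{s}v)\in B_\Hyp(\eta(z_0),\tilde r_s)\}
\]
is unbounded, and clearly $\int_0^\infty \tilde r_s\, ds = 2\int_0^\infty r_t\, dt$, so one integral diverges if and only if the other does. Applying Theorem \ref{mau1} to $V = \Sigma$, $p = \eta(z_0)$, and radii $\tilde r_s$ yields the desired dichotomy for $\mu$-a.e.\ $v \in T^1\Sigma$, and pulling back through $\phi$ yields it for Haar-a.e.\ $g \in G/\Gamma$.

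There is no real obstacle here — the argument is a routine change of variables combined with the standard dictionary between $G/\Gamma$ and $T^1\Sigma$. The only subtlety is to keep track of the factor of $2$ between the parameter $s$ of $a_s$ and the hyperbolic arclength parameter of $\gamma_s$, and to verify that $\Sigma$ satisfies the hypotheses of Maucourant's theorem (it is a finite-volume hyperbolic orbifold, which after passing to a finite cover — or by treating the elliptic points as a null set — falls within the scope of \cite{mau}).
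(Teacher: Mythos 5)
There is a genuine gap, and it sits exactly where the whole difficulty of this corollary lies. The point $z_0 = -\tfrac12 + i\tfrac{\sqrt3}{2}$ from \equ{z0} is an elliptic fixed point of $\Gamma$ (of order $3$), so the target $B_\Hyp\big(\eta(z_0),r_t\big)$ is a ball centered at a cone point of the orbifold $\Hyp/\Gamma$. Theorem \ref{mau1} is stated for finite-volume hyperbolic \emph{manifolds}, and its proof requires a fundamental domain containing a lift of the shrinking targets in its interior --- impossible for a ball centered at a branch point. Your main argument applies Theorem \ref{mau1} directly to $V=\Sigma$, and your justification (``treating the elliptic points as a null set'') addresses the wrong issue: the problem is not a null set of bad starting vectors $v$, but the fact that the \emph{target itself} sits at the singularity, where the quotient metric is not locally that of a hyperbolic surface and $T^1\Hyp/\Gamma$ fails to be a circle bundle over $\Sigma$. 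The identification of Haar with Liouville measure via $\phi$ does not by itself bring $\Sigma$ within the scope of \cite{mau}.

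You do name the correct fix --- ``passing to a finite cover'' --- but only parenthetically, and that step is the entire content of the proof rather than a remark. One must: choose a torsion-free finite-index subgroup $\Gamma'\subset\Gamma$ (e.g.\ the congruence subgroup $\Gamma(2)$, or invoke Selberg's lemma), observe that the preimage of $B_\Hyp\big(\eta(z_0),r_t\big)$ under the branched cover $\bar\eta:\Hyp/\Gamma'\to\Hyp/\Gamma$ is a \emph{union of two disjoint balls} of the same radius ($\eta(z_0)$ has two preimages, each of multiplicity $3$), check that Theorem \ref{mau1} still yields a zero--one law for a finite union of shrinking balls, apply it on $\Hyp/\Gamma'$ (noting that the flow runs in negative time, which is harmless since reversing tangent vectors preserves Liouville measure), and finally descend: the lifted hitting set is exactly $\bar\eta^{-1}$ of the original one, and a branched covering map sends null sets to null sets in both directions. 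Your bookkeeping of the factor of $2$ in the time change and of the measure identification is fine, but without the covering argument the application of Maucourant's theorem is not licensed.
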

 The difference between Theorem \ref{mau1} and Corollary \ref{Cor-of-mau} is that our targets $B_{\Hyp}(\eta(z_0),r_t)$ are centered at a branch point of the Riemann surface $\Sigma = \Hyp/\Gamma$, while the proof in \cite{mau} assumes that the surface $V$ (a finite volume quotient of $\Hyp$) admits a fundamental domain that contains a lift of the shrinking targets in its interior.
 To rectify this difficulty, we
let $\Gamma' \subset \Gamma$ be a subgroup of finite index with the
property that it acts on $\Hyp$ as a fixed-point free group of isometries, or, said in other words, that the image of $\Gamma'$ in $\PSL_2(\R)$ has no torsion\footnote{We remark that by Selberg's Lemma \cite{Se} any lattice in $G$ has a torsion-free subgroup of finite index.}. 
In this case, unlike that of $\Gamma$, the quotient map $\eta:\Hyp \to \Hyp/\Gamma'$ is not only holomorphic, but also has non-zero derivative at each point. 
This non-degeneracy ensures that $\eta$ is a local diffeomorphism, and that it induces a metric on the quotient $\Hyp/\Gamma'$, making it a hyperbolic manifold.

{So assuming $\Gamma'$ as above, we form a diagram similar to \eqref{diagram-chase-to-upperhalfplane} with $T^1\Hyp/\Gamma'$ identified with $T^1(\Hyp/\Gamma')$.}
\begin{equation}\label{unit-tangent-bundle-as-homo}
\begin{tikzcd}
  G   \arrow[r, "\phi"] \arrow[d, "\eta"] & T^1\Hyp \arrow[d, "d\eta"]  \arrow[r, "\pi"] & \Hyp \arrow[d, "\eta"] \\
  G/\Gamma' \arrow[r]  & T^1(\Hyp/\Gamma') \arrow[r] & \Hyp/\Gamma'
\end{tikzcd}
\end{equation}

{
Thus we have a map from the homogeneous space $G/\Gamma'$ to the unit tangent bundle of a hyperbolic surface, and this map is a diffeomorphism if $\Gamma'$ contains $\pm I$.}

Consider the curve $(i, i)\cdot a_{-t/2}$ in $T^1\Hyp$.
It gives the velocity vector field over a unit-speed, distance-minimizing curve, that is, the velocity field over a geodesic.
Since $g$ acts by isometries, the same is true of $(i,i)\cdot a_{-t/2}g$ and we have
\begin{equation}\label{equivariance-of-diagonal-and-geodesic}
     \phi(a_{-t/2} g)=(i,i)\cdot a_{-t/2}g = \gamma_t\big(\phi(g)\big)
\end{equation}
where $\gamma_t$ denotes the geodesic flow as in Theorem \ref{mau1}. 
Thus $\phi$ is an $\R$-equivariant map intertwining this diagonal action on $G$ and the geodesic action on $T^1\Hyp$. 
Moreover, since $\eta:\Hyp \to \Hyp/\Gamma'$ preserves the metric, this equivariance is preserved after passing to the map between $G/\Gamma'$ and $T^1(\Hyp/\Gamma').$

Lastly, consider the form $\frac{dxdyd\theta}{y^2}$ on ${T^1\Hyp}$. 
It is invariant under the right action ${T^1\Hyp}\curvearrowleft G$ and so pulls back under $\phi$ to a right invariant top form on $G$. 
Since $G$ is unimodular, this form is bi-invariant and thus descends to a left-invariant top form on $G/\Gamma'$,  a Haar measure.  
On the other hand, $\frac{dxdyd\theta}{y^2}$ gives a Liouville measure and by invariance also descends to a top form on $(T^1\Hyp)/\Gamma' \simeq T^1(\Hyp/\Gamma').$ 
By the diagram above we see that this form, pulled back to $G/\Gamma'$, is the same Haar measure. 
We now use a specific torsion-free $\Gamma'\subset \Gamma$ to prove Corollary \ref{Cor-of-mau}.

\begin{proof}[Proof of Corollary \ref{Cor-of-mau}]
{Set $\Gamma'$ in the discussion above to be the congruence subgroup $\Gamma(2)\subset \Gamma$, a torsion-free {(up to $\pm I$)} subgroup of index $6$}. As required, $\Gamma'$ acts on $\Hyp$ freely as a group of isometries and moreover contains $\pm I$. 
See Example \ref{gamma(2)} below for an example of one of its fundamental domains to keep in mind for the rest of the proof.

Combining diagrams \eqref{diagram-chase-to-upperhalfplane} and  \eqref{unit-tangent-bundle-as-homo}
gives us the following commutative diagram:

 \begin{equation}\label{big-diagram} \begin{tikzcd}
                  & G \arrow[dl] \arrow[dr] \arrow[dd] & \\
  G/\Gamma' \arrow[rr,crossing over, "\bar{\eta}" near start] \arrow [dd, "\phi" '] &    & G/\Gamma \arrow[dddd, "\pi\circ\phi"] \\
              & T^1\Hyp  \arrow[dl]   \arrow[dd]  &    \\  
   T^1(\Hyp/\Gamma')   \arrow[dd, "\pi" ']       & &                 \\
                            &   \Hyp   \arrow[dl] \arrow[dr, "\eta"] &   \\
       \Hyp/\Gamma' \arrow[rr, "\bar{\eta}"] & & \Hyp/\Gamma                                   
\end{tikzcd} \end{equation}
We apologize for the abuse of notation and hope context will remove any ambiguity. 
We may as well assume $r_t\to 0$, for otherwise ergodicity would prove the result. Note that, even though the map $\bar{\eta}$ has degree $6$, there are only two preimages of $\eta(z_0)$, each with multiplicity $3$,
see diagram \eqref{fundamental-domain-Gamma2} below.
Thus, in the bottom triangle of diagram \eqref{big-diagram}, the preimage of $B_{\Hyp}(\eta(z_0),r_t)$ under $\bar{\eta}$ is the union of two small, disjoint, hyperbolic balls which we write as $\bigcup B_{\Hyp}(p_i,r_t)$. 

\smallskip
Consider the following subsets of $G/\Gamma$ and $G/\Gamma'$ respectively:
\begin{equation*}\label{ball-01-sets}
\begin{split}
T &:= \left\{ g \in G/\Gamma: a_tg \in (\pi\circ\phi)^{-1}B_{\Hyp}\big(\eta(z_0),r_t\big) \text{ for an unbounded set of }t>0 \right\}, \\
  T' &:= \left\{ g \in G/\Gamma': a_tg \in (\pi\circ\phi)^{-1}\left(\bigcup B_{\Hyp}(p_i,r_t)\right) \text{ for an unbounded set of }t>0 \right\}.
\end{split}
\end{equation*}
In the upper triangle of \eqref{big-diagram} one can use the commutativity of the diagram to check that $\bar{\eta}^{-1}(T) = T'$. 
Since the union of two measure zero sets is of measure zero, Theorem \ref{mau1} applies equally well when the targets are a union of two balls (having the same radius for each time $t$) in $\Hyp/\Gamma'$.
Thus we have the required zero-one law for the set $T'$. Note that we have actually applied Theorem \ref{mau1} for negative times; cf.\  \eqref{equivariance-of-diagonal-and-geodesic}.
This is valid since the automorphism of the unit tangent bundle reversing the direction of tangent vectors preserves the Liouville measure. The conclusion 
now follows since 
$\bar{\eta}$, being a branched covering map, sends null sets to null sets.
\end{proof}

\begin{exmp}\label{gamma(2)}
\rm
One fundamental domain for $\Gamma(2)$ can seen as the union of six fundamental domains for $\Gamma$. The preimage $\bar{\eta}^{-1} B_{\Hyp}( \eta(z_0),r_t)$ in $\Hyp/\Gamma(2)$ is the union of two balls.

\begin{equation}\label{fundamental-domain-Gamma2}
\begin{tikzpicture}[scale=1.5, baseline=(current  bounding  box.center)]
\draw [black,<->] (-2,0) -- (2,0);

\draw [black, ->] (0-0.5,0) -- (0-0.5,3.00);

\draw [violet, thick] (-1/2 - 0.5 ,1.7320/2) -- (-1/2 -0.5,3);

\draw [domain=-0.5-0.5:0-0.5, smooth, variable=\x, brown, thick] plot ({\x},{sqrt(1-(\x-(-1-0.5))^2)});

\draw [domain=0-0.5:0.5-0.5, smooth, variable=\x, orange, thick] plot ({\x},{sqrt(1/9-(\x-(1/3-0.5))^2)});

\draw [domain=0.5-0.5:1.0-0.5, smooth, variable=\x, purple, thick] plot ({\x},{sqrt(1/9-(\x-(2/3-0.5))^2)});

\draw [domain=1-0.5:1.5-0.5, smooth, variable=\x, yellow, thick] plot ({\x},{sqrt(1-(\x-(2-0.5))^2)});

\draw [red, thick] (1.5-0.5,1.7320/2) -- (1.5-0.5,3);

\draw [teal, thick, dotted] (1/2-0.5,1.7320/6) -- (1/2-0.5,3);

\draw [domain=-0.5:1, smooth, variable=\x, violet, thick, dotted] plot ({\x-0.5},{sqrt(1-(\x)^2)});

\draw [domain=0-0.5:1.5-0.5, smooth, variable=\x, magenta, thick, dotted] plot ({\x},{sqrt(1-(\x-(1-0.5))^2)});

\draw [fill, blue] (-0.5-0.5,1.7320/2) circle [radius=0.05];

\draw [fill, black] (0.5-0.5,1.7320/2) circle [radius=0.05];

\draw [fill, blue] (1.5-0.5,1.7320/2) circle [radius=0.05];

\draw [fill, blue] (0.5-0.5,1.7320/6) circle [radius=0.05];

\node[below] at (0-0.5,0) {\tiny $(0,0)$};

\node [below=0.4cm, align=flush center,text width=12cm]
        {\footnotesize
            A fundamental domain for $\Gamma(2)$ with one of the balls in its interior. The points at the center of the blue balls represent the same point in $\Hyp/\Gamma(2).$
        };
\end{tikzpicture}
\end{equation}
\end{exmp}

We can summarize the results of Sections \ref{h2} and \ref{sectionmau} as follows:

\begin{thm}\label{01-law-on-lattices}
Let $f$ be any continuous, non-decreasing function $\R_{>0} \to \R$ with $f(t) < 1$. Then the set
\begin{equation}\label{h-sets}
    T(f):= \big\lbrace \Lambda\in X : a_t\Lambda \in {\mathcal K}\big(f(t)\big) \text{ for an unbounded set of } t > 0\big\rbrace
\end{equation}
has full (resp.\ zero) Haar measure if $\int\big(1-f(t)\big)\,dt$ diverges (resp.\ converges).
\end{thm}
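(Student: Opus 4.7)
The plan is to transfer \eqref{h-sets} into a hitting-time statement for the geodesic flow on $\Sigma$ and then sandwich the targets $\pi\bigl({\mathcal K}(f(t))\bigr)$ between two families of hyperbolic balls centred at $\eta(z_0)$ (via Proposition \ref{estimates}), so that Corollary \ref{Cor-of-mau} applies on both sides. The left $K$-invariance of ${\mathcal K}(r)$, used already in \S\ref{h2}, gives the equivalence
$$
a_t\Lambda \in {\mathcal K}\bigl(f(t)\bigr) \iff (\pi\circ\phi)(a_t\Lambda) \in \pi\bigl({\mathcal K}(f(t))\bigr),
$$
so $T(f)$ is precisely the set of $\Lambda\in X$ whose reparametrised geodesic orbit enters $\pi\bigl({\mathcal K}(f(t))\bigr)$ for an unbounded set of $t>0$.

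Before invoking Proposition \ref{estimates}, I would dispose of the trivial case $\limsup_{t\to\infty} f(t) = c < 1$. Since $f$ is non-decreasing, this forces $f(t)\le c$ for all $t$, so $\pi\bigl({\mathcal K}(f(t))\bigr)\supset \pi\bigl({\mathcal K}(c)\bigr)$, a fixed set with non-empty interior; at the same time $\int\bigl(1-f(t)\bigr)\,dt \ge \int(1-c)\,dt = \infty$. Ergodicity of the $a_t$-action on $X$ (as already used in Proposition \ref{erg}) then implies that for Haar-almost every $\Lambda$ the orbit $\{a_t\Lambda:t>0\}$ is dense and in particular enters $\pi^{-1}\bigl({\mathcal K}(c)\bigr)$ for an unbounded set of $t$; thus $T(f)$ has full measure, consistent with the claimed dichotomy. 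Hence we may assume $f(t)\to 1^-$ and set $\varepsilon(t) := 1-f(t)\to 0^+$.

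For $t$ large enough that $\varepsilon(t)$ falls in the regime of Proposition \ref{estimates}, that proposition yields
$$
B_\Hyp\!\bigl(\eta(z_0),\,c_0\varepsilon(t)\bigr) \;\subset\; \pi\bigl({\mathcal K}(f(t))\bigr) \;\subset\; B_\Hyp\!\bigl(\eta(z_0),\,c_0'\varepsilon(t)\bigr).
$$
The two families $r_t := c_0\varepsilon(t)$ and $r_t' := c_0'\varepsilon(t)$ are continuous, tend to $0$, and differ by a positive multiplicative constant; hence $\int r_t\,dt$, $\int r_t'\,dt$, and $\int\varepsilon(t)\,dt$ share the same convergence or divergence. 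If $\int \varepsilon(t)\,dt = \infty$, I apply Corollary \ref{Cor-of-mau} with the smaller radii $r_t$: almost every $\Lambda$ satisfies $(\pi\circ\phi)(a_t\Lambda)\in B_\Hyp(\eta(z_0), r_t)\subset \pi\bigl({\mathcal K}(f(t))\bigr)$ for an unbounded set of $t$, so $T(f)$ has full Haar measure. If $\int\varepsilon(t)\,dt < \infty$, I apply Corollary \ref{Cor-of-mau} with the larger radii $r_t'$: almost every $\Lambda$ has only a bounded set of $t$ for which $(\pi\circ\phi)(a_t\Lambda)$ even lies in $B_\Hyp(\eta(z_0), r_t')\supset \pi\bigl({\mathcal K}(f(t))\bigr)$, so $T(f)$ is Haar-null.

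The only conceptual obstacle is bookkeeping: making sure the sandwich in Proposition \ref{estimates}, which is only valid for $t$ beyond some $T_0$, does not affect the statement -- but this is automatic since ``unbounded set of $t$'' is insensitive to any bounded initial segment, and the Maucourant conclusion is invariant under replacing $r_t$ by a function that agrees with it for large $t$. I would also remark that Corollary \ref{Cor-of-mau} is already phrased directly for the $a_t$-flow on $G/\Gamma$, so no further reparametrisation (such as the factor $2$ relating $a_t$ to the hyperbolic geodesic flow) is needed at this stage.
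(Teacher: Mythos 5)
Your proposal is correct and takes essentially the same approach as the paper. The paper's proof is a one-liner—``By ergodicity, we may as well assume $f(t)$ converges to $1$... Proposition \ref{estimates} and Corollary \ref{Cor-of-mau} give us the result''—and your write-up simply makes explicit the same two steps (disposing of the $\limsup f<1$ case via ergodicity, then sandwiching $\pi\bigl(\mathcal{K}(f(t))\bigr)$ between balls of comparable radii and applying the Maucourant-based corollary on each side).
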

\begin{proof}
By ergodicity, we may as well assume $f(t)$ converges to $1$ as $t\to \infty$. In this case, Proposition $\ref{estimates}$ and Corollary $\ref{Cor-of-mau}$ give us the result.
\end{proof}

\section {From the space of lattices to a submanifold}\label{transition}

We fix the following notation, $w_y:=\left[ {\begin{array}{cc} 1 &  0 \\
                        y & 1 \end{array}}\right], u_z:=\left[ {\begin{array}{cc} 1 &  z \\
   0 & 1 \end{array}}\right]$, and $a_s = \left[ {\begin{array}{cc} e^s &  0 \\
 0 & e^{-s} \end{array}}\right]$ as before. Observe the relation between the set $T(f)$ in  $\eqref{h-sets}$ and the defining condition in \equ{psinudani}. 
If we regard $u$ as a function from $\R$ to $X$ sending $\alpha$ to $\Lambda_\alpha = u_\alpha\Z^2$, we see immediately:
\begin{lem}
 If $\psi$ is as in Theorem \ref{dtpsieucl2} and $r$ is the function defined by the property $r\left(\frac{1}{2}\ln\frac{t}{\psi(t)}\right)=\sqrt{t\psi(t)}$, which is a special case $m=n=1$  of  \equ{dani}, then \begin{equation}\label{pullback-to-dpsi}
     D_{}(\psi)^c= u^{-1}\big( T (r)\big).
\end{equation}
      \end{lem}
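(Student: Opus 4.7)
The plan is to simply chase the definitions: the lemma is essentially a restatement of Proposition~\ref{danicorr2d} specialized to the map $u:\alpha\mapsto \Lambda_\alpha=u_\alpha\Z^2$, once one notices that the function $r$ constructed in the lemma is precisely the one produced by the Dani correspondence~\equ{dani2d}.

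Concretely, I would first observe that the defining relation $r\big(\tfrac{1}{2}\ln(t/\psi(t))\big)=\sqrt{t\psi(t)}$ is exactly the $m=n=1$ case of \equ{dani}, so Proposition~\ref{danicorr2d} applies and yields
$$
\alpha \in D(\psi)^c \iff a_s\Lambda_\alpha \in {\mathcal K}\big(r(s)\big) \text{ for an unbounded set of } s>0.
$$
Next, since $\Lambda_\alpha = u(\alpha)$ by definition of $u$, the right-hand condition is the same as $a_s\,u(\alpha)\in {\mathcal K}\big(r(s)\big)$ for an unbounded set of $s>0$. By the definition \equ{h-sets} of $T(r)$ (with the variable $t$ there playing the role of $s$), this is exactly the statement that $u(\alpha)\in T(r)$, i.e.\ $\alpha\in u^{-1}\big(T(r)\big)$.

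The only thing that could potentially be viewed as an obstacle is a bookkeeping check that the monotonicity and continuity hypotheses of Theorem~\ref{dtpsieucl2} ensure the function $r$ is well-defined as a function of $s$ (so that the change of variables between $t$ and $s$ implicit in~\equ{dani2d} is legitimate) and that $r(s)<1$ eventually, so that $r$ fits into the framework of Theorem~\ref{01-law-on-lattices}. The first is immediate from the fact that $t\mapsto \tfrac12\ln(t/\psi(t))$ is continuous and strictly increasing under~\equ{smaller} and~\equ{monotone}; the second follows since $t\psi(t)<1$ for large $t$ by~\equ{smaller}, giving $r(s)=\sqrt{t\psi(t)}<1$ eventually. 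With these remarks in place, the proof reduces to a single line citing Proposition~\ref{danicorr2d} and unwinding~\equ{h-sets}.
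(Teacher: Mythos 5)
Your proposal is correct and matches the paper's treatment: the paper gives essentially no proof at all, simply remarking before the lemma that one "sees immediately" the relation between the condition defining $T(f)$ in \equ{h-sets} and the condition in Proposition \ref{danicorr2d}, which is exactly the definition-chase you carry out. Your extra bookkeeping remark about the well-definedness of $r$ and $r(s)<1$ is harmless and consistent with the discussion surrounding \equ{dani} and Remark \ref{cont}.
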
     
In order to prove Theorem \ref{dtpsieucl2} using Theorem \ref{01-law-on-lattices}, we show that the sets $T(f)$ are invariant, in some sense, under the action of $w_y$ and $a_s$. 
This allows us to conclude that the Haar measure of $T(f)$ is locally controlled by the Lebesgue measure of $u^{-1}\big(T(f)\big)$. 
The argument  is similar to that of \cite{d} (cf.\ Proposition \ref{erg}). 
The effect of perturbing a lattice by $w_y$ or $a_s$ will be computed in terms of the  function $$\delta:\Lambda\mapsto\sqrt{\Delta}\cdot \inf\limits_{\x\in {\Lambda} \nz } \| \x\|$$  as in \equ{delta}.
With the help of this function, the sets $T(f)$ can be rewritten as 
\begin{equation*}\label{h-sets-delta}
    T(f)=\left\lbrace {\Lambda} \in X: a_s {\Lambda} \in \delta^{-1}[f(s),1] \text{ for an unbounded set of $s$}\right\rbrace.
\end{equation*}

 \begin{prop}\label{lx-estimate}  We have 
\eq{deltaestimates}{  {\delta(a_sw_y{\Lambda})}\le (1 + |y|e^{-2s}){\delta(a_s{\Lambda})}
\quad \text{and}\quad\delta(a_s{\Lambda})\le (1 + |y|e^{-2s}){\delta(a_sw_y{\Lambda})}.}
\end{prop}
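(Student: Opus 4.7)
The plan is to reduce both inequalities to a single simple observation, using the commutation relation between $a_s$ and $w_y$. A direct matrix computation shows that
$$a_s w_y a_{-s} = w_{ye^{-2s}}, \qquad \text{equivalently}\qquad a_s w_y = w_{ye^{-2s}}\,a_s,$$
so acting on lattices one has $a_s w_y \Lambda = w_{ye^{-2s}}(a_s \Lambda)$. The main gain is that conjugation by $a_s$ shrinks the ``$y$-parameter'' by $e^{-2s}$, which is exactly the factor appearing in \equ{deltaestimates}.

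Next I would establish the pointwise estimate
$$\|w_z\vv\| \le (1+|z|)\,\|\vv\| \qquad \text{for every } \vv\in\R^2,\ z\in\R,$$
where $\|\cdot\|$ is the Euclidean norm. This is immediate from the triangle inequality: writing $\vv = (v_1,v_2)$, we have $w_z\vv - \vv = (0,zv_1)$, so $\|w_z\vv\| \le \|\vv\| + |z||v_1| \le (1+|z|)\|\vv\|$. (This is the only place where the Euclidean structure is really used, via the obvious bound $|v_1|\le \|\vv\|$.)

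Combining the two steps proves the first inequality: for any nonzero $\vv\in\Lambda$,
$$\|w_{ye^{-2s}}\,a_s\vv\| \le (1+|y|e^{-2s})\,\|a_s\vv\|,$$
and taking the infimum over $\vv\in\Lambda\nz$ (and multiplying by $\sqrt{\Delta}$) gives $\delta(a_sw_y\Lambda)\le (1+|y|e^{-2s})\,\delta(a_s\Lambda)$. For the second inequality, rewrite the commutation as $a_s = w_{-ye^{-2s}}\,a_s\,w_y$, so $a_s\Lambda = w_{-ye^{-2s}}(a_s w_y \Lambda)$, and then apply the same pointwise bound to vectors in $a_s w_y\Lambda$.

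There is no real obstacle here; the only thing to be careful about is bookkeeping with the sign and the direction of the conjugation, to make sure the factor $e^{-2s}$ (rather than $e^{2s}$) appears and that the inequality goes the right way on each side.
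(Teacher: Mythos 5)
Your proposal is correct and follows essentially the same route as the paper: the commutation relation $a_sw_y=w_{ye^{-2s}}a_s$ combined with the elementary bound $\|w_z\vv\|\le(1+|z|)\|\vv\|$ (the paper phrases this as $\|w_z-I\|\le|z|$), then passing to the infimum over nonzero lattice vectors. Your explicit rewriting $a_s=w_{-ye^{-2s}}a_sw_y$ for the second inequality is just a slightly more spelled-out version of the paper's ``the other estimate follows similarly.''
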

                        
       \begin{proof} Note the commutation relations, $$a_sw_y=a_sw_ya_{-s}a_s= w_{ye^{-2s}}a_s.$$ 
       Now let $\vv\in \mathbb{R}^2$ be a vector in the lattice ${\Lambda}$ such that $\delta(a_s{\Lambda}) = \sqrt{\Delta}\cdot\|a_s\vv\|$. We compute \begin{equation*}
  \begin{split}
   \|a_sw_y{\vv}\| - \|a_s{\vv}\|  & = \|w_{ye^{-2s}}a_s{\vv}\| -\|a_s{\vv}\| \\
   & \leq \|w_{ye^{-2s}}-I \| \|a_s{\vv} \| \\
   & \leq |y|e^{-2s}\|a_s{\vv} \|.
  \end{split}
  \end{equation*}
This gives 
$$\sqrt{\Delta}\cdot\|a_sw_yv\| \leq (1+|y|e^{-2s})\delta(a_s{\Lambda}),$$
which implies the first inequality in \equ{deltaestimates}.
The other estimate follows similarly. \end{proof}
  
  \begin{thm}\label{01-on-pullback}
  For a continuous, non-decreasing function $f:\R_{>0} \to \R$ with $f(t) < 1$, the set $u^{-1}\big(T(f)\big)$ has full (resp.\ zero) Lebesgue measure provided $\int\big(1-f(t)\big)\,dt$ diverges (resp.\ converges).
  \end{thm}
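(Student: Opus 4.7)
The plan is to transfer the Haar-measure zero-one law of Theorem~\ref{01-law-on-lattices} from $X$ to the desired Lebesgue statement on $u^{-1}(T(f))\subseteq\R$ by a Fubini argument, using the product decomposition of a neighborhood of $\Z^2\in X$ and the perturbation estimate of Proposition~\ref{lx-estimate}. This upgrades the template used in the proof of Proposition~\ref{erg} from density of trajectories to a quantitative limsup hitting statement.

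I would first parametrize a neighborhood $V$ of $\Z^2\in X$ by the local diffeomorphism $\Psi(y,s_0,\alpha):=w_y a_{s_0}u_\alpha\Z^2$ on a small cube $(-\e,\e)^3$; there the Haar measure coincides with a smooth positive multiple of $d\alpha\,dy\,ds_0$. Using the commutation $a_s w_y a_{s_0}=w_{ye^{-2s}}a_{s+s_0}$ together with Proposition~\ref{lx-estimate}, I would define slightly perturbed approximation functions
\[
    f^\pm_{y,s_0}(s'):=f(s'-s_0)\bigl(1+|y|e^{-2(s'-s_0)}\bigr)^{\pm 1},
\]
which yield the two-sided inclusion
\[
    u^{-1}\bigl(T(f^+_{y,s_0})\bigr)\;\subseteq\;\bigl\{\alpha:w_y a_{s_0}u_\alpha\Z^2\in T(f)\bigr\}\;\subseteq\; u^{-1}\bigl(T(f^-_{y,s_0})\bigr)
\]
for every $(y,s_0)\in(-\e,\e)^2$. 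A brief computation verifies that $\int(1-f^\pm_{y,s_0})\,ds'$ converges (resp.\ diverges) exactly when $\int(1-f)\,ds$ does, since the correction $|y|e^{-2s'}$ is integrable.

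Fubini applied to $\Psi^{-1}(T(f)\cap V)$, combined with Theorem~\ref{01-law-on-lattices}, then pins down the Lebesgue measure of the middle set in the sandwich for almost every $(y,s_0)$: full in $(-\e,\e)$ if $\int(1-f)=\infty$, and null if $\int(1-f)<\infty$. Sandwiching and letting a sequence $(y_n,s_{0,n})\to 0$ transfers this to $u^{-1}(T(f))$: the Haar measure of the ``strip'' $T(f^-_{y_n,s_{0,n}})\setminus T(f^+_{y_n,s_{0,n}})$ shrinks to $0$ as the interval $[f^-,f^+]$ narrows to $\{f\}$, so the two ends of the sandwich have Lebesgue measures converging to that of $u^{-1}(T(f))$. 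The extension from $(-\e,\e)$ to all of $\R$ follows by the $\Z$-invariance $u_{\alpha+1}\Z^2=u_\alpha\Z^2$ (equalizing the Lebesgue density in every unit interval), together with repeating the local argument at arbitrary base points $u_{\alpha_0}\Z^2$ to cover a fundamental interval.

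The main obstacle is the squeezing step from $u^{-1}(T(f^\pm))$ back to $u^{-1}(T(f))$: $\limsup$ sets like $T(f)$ behave badly under pointwise monotone limits of the target function, so one must argue that the difference $T(f^-)\setminus T(f^+)$ -- consisting of lattices whose normalized first minimum $\delta(a_s\Lambda)$ lies in the thin strip $[f^-(s),f^+(s)]$ at its late-time peaks -- has vanishing Haar measure as the strip narrows. This relies on the perturbation $|y|e^{-2s}$ being exponentially small compared to $1-f(s)$ on a density-one set of times in the divergent regime, and symmetrically for the convergent regime. A secondary technical matter is truncating $f^+\wedge(1-\eta)$ to stay within the range $<1$ required by Theorem~\ref{01-law-on-lattices}, which is harmless for the integral dichotomy.
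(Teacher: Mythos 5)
Your overall strategy --- the local chart $(y,s_0,\alpha)\mapsto w_ya_{s_0}u_\alpha\Z^2$, the commutation $a_sw_y=w_{ye^{-2s}}a_s$, the perturbation estimate of Proposition~\ref{lx-estimate}, and Fubini --- is exactly the paper's, but the way you assemble these pieces leaves a genuine gap at the decisive step. Your sandwich places the \emph{known} quantity (the Lebesgue measure of the slice $\{\alpha: w_ya_{s_0}u_\alpha\Z^2\in T(f)\}$, obtained from $\operatorname{Haar}(T(f))$ via Fubini) in the middle and the \emph{unknown} sets $u^{-1}(T(f^{\pm}_{y,s_0}))$ on the outside, so you are forced to pass to the limit $(y,s_0)\to 0$ and argue that $\operatorname{Leb}\bigl(u^{-1}(T(f^{\pm}))\bigr)\to\operatorname{Leb}\bigl(u^{-1}(T(f))\bigr)$. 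The justification you offer --- that the Haar measure of the strip $T(f^-)\setminus T(f^+)$ tends to $0$ --- does not deliver this: the orbit $\{u_\alpha\Z^2\}$ is itself Haar-null in $X$, so a Haar-null strip can contain $u_\alpha\Z^2$ for a full-measure set of $\alpha$. Controlling the Lebesgue measure of $u^{-1}$ of a Haar-small set is precisely the content of the theorem being proved, so this step is circular; you correctly flag it as ``the main obstacle'' but do not resolve it. A second, related problem is that Theorem~\ref{01-law-on-lattices} cannot be applied to $f^{\pm}_{y,s_0}$: these functions need not be non-decreasing (a non-decreasing $f$ times the decreasing factor $(1+|y|e^{-2t})^{\pm1}$ is not monotone in general), and $f^+$ can exceed $1$. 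The proposed truncation $f^+\wedge(1-\eta)$ at a \emph{fixed} level is not harmless: in the convergence case $f(t)\to 1$, so eventually $f^+\wedge(1-\eta)\equiv 1-\eta$ and $\int\bigl(1-f^+\wedge(1-\eta)\bigr)\,dt=\infty$, reversing the dichotomy.

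The paper sidesteps both issues by arranging the comparison so that only \emph{one-sided} inclusions with \emph{admissible} auxiliary functions are needed, and no limit in $(y,s_0)$ is ever taken. In the convergence case it uses $h(t)=(\tau_{-\e}f)(t)(1-\e e^{-2t})$, which is genuinely non-decreasing and $<1$, proves $w_ya_s\bigl(T(f)\bigr)\subset T(h)$ for all $|y|,|s|<\e$, and concludes from $\operatorname{Haar}(T(h))=0$ that the whole box over $u^{-1}(T(f))$ is null. In the divergence case the naive majorant $f\cdot(1+\e e^{-2t})$ is replaced by $\tfrac{1+\tau_\e f}{2}$ (monotone, $<1$, still with divergent $\int(1-\cdot)$), and the inclusion $a_sw_y\bigl(T(\tfrac{1+\tau_\e f}{2})\setminus T(1-\e e^{-2(\cdot)})\bigr)\subset T(f)$ is proved after discarding the Haar-null set $T(1-\e e^{-2(\cdot)})$ --- note the cutoff level here tends to $1$ at an integrable rate rather than being a fixed $1-\eta$. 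If you restructure your argument along these lines (auxiliary monotone functions on the side where the zero-one law is invoked, one-sided inclusions, no squeeze), your chart-and-Fubini framework goes through.
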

  \begin{proof}
  Choose any $z_1 \in \R$. There exists an $\varepsilon>0$ such that the map $$\Phi:W:=(-\varepsilon,\varepsilon)^2\times (z_1-\varepsilon,z_1+\varepsilon) \to X$$ sending $(y,s,z)$ to the lattice generated by $w_ya_su_z$ is a diffeomorphism. 
  Depending on the convergence of the integral in question, we will show that $u^{-1}\big(T(f)\big) \cap (z_1-\varepsilon,z_1+\varepsilon)$ has full or zero measure. This clearly suffices to prove the theorem. \smallskip
  
\textit{Proof of the convergence case.} Assume $\int (1-f)$ converges. Define
\begin{equation*}\label{convergencesupset}
    h(t):=({\tau}_{-\varepsilon}f)(t)\cdot(1-\varepsilon e^{-2t})
\end{equation*}
where ${\tau}$ is translation, defined by $$(\tau_{-\varepsilon}f)(t) := f(t-\varepsilon).$$
The function  $h$ is still  non-decreasing, continuous, and bounded from above by $1$, 
and the set $T(h)$ still is well defined even though $h(t)$ is only defined for $t>\varepsilon$. 
Our integrability assumption on $f$ and Theorem \ref{01-law-on-lattices} imply that $T(h)$ has zero measure.
\begin{claim}
  Let $|y|,|s| < \varepsilon$. If $\Lambda \in T(f)$, then $w_ya_s\Lambda \in T(h)$.
\end{claim}  \begin{proof}
Let ${\Lambda}\in T(f)$ and $|s|<\varepsilon$. Let $(t_n)$ be a sequence witnessing ${\Lambda} \in T(f)$. Then \begin{equation*}\delta(a_{t_n-s}a_s{\Lambda})=\delta(a_{t_n}{\Lambda})\geq f(t_n)=({\tau}_sf)(t_n-s) \geq ({\tau}_{-\varepsilon}f)(t_n-s).\end{equation*} So $a_s{\Lambda} \in T({\tau}_{-\varepsilon}f)$.\smallskip

Now say ${\Lambda}\in T({\tau}_{-\varepsilon}f)$ and let $|y|<\varepsilon$. Let $t_n$ be a sequence witnessing ${\Lambda}\in T({\tau}_{-\varepsilon}f)$. 
Using Proposition \ref{lx-estimate} we see that
\begin{equation*}\delta(a_{t_n}w_y{\Lambda}) \geq \frac{\delta(a_{t_n}{\Lambda})}{1 + \varepsilon e^{-2t_n}}\geq ({\tau}_{-\varepsilon}f)(t_n)(1-\varepsilon e^{-2t_n}).\end{equation*} 
Hence $w_y{\Lambda} \in T(h)$, and the claim is proved.
\end{proof}
  
An application of the above claim to $\Lambda = \Lambda_z$ shows that $\Phi$ maps the set 
$$(-\varepsilon,\varepsilon)^2 \times \Big(u^{-1}\big(T(f)\big)\cap (z_1-\varepsilon,z_1+\varepsilon)\Big)$$ to a set of measure zero. 
The Fubini Theorem and the local equivalence of Haar measure and Lebesgue measure shows that $u^{-1}\big(T(f)\big)\cap (z_1-\varepsilon,z_1+\varepsilon)$ has Lebesgue measure zero.\smallskip

\textit{Proof of the divergence case.} The strategy is similar: we show that, in terms of the local coordinates, the union of planes above $u^{-1}\big(T(f)\big)$ contains some full measure set. 
As before, this amounts to finding some appropriate function $h$ such that $T(h)$ is full measure and such that the family of planes contains $T(h)$ as a subset. 
A naive guess based on Proposition \ref{lx-estimate} would be to use the function  {$f(\cdot)(1 + \varepsilon e^{-2(\cdot)})$}. 
However this function is not monotonic; indeed it can even be greater than $1$ on certain intervals depending on how pathological $f$ is. 
The adjustment we make below is to choose $h$ more carefully and then to throw out some measure $0$ set to ensure it is contained in our family of planes. 
\smallskip

Let $\int(1-f)$ diverge. 
Then $\int \frac{1-f}{2}$ diverges too. The function $\frac{1+f}{2}$ certainly satisfies the hypotheses of Theorem \ref{01-law-on-lattices}, 
hence $T\left(\frac{1+f}{2}\right)$ has full measure. 
The same conclusion holds if we replace $f$ by ${\tau}_{\varepsilon}f$. Another application of Theorem \ref{01-law-on-lattices} shows that $T\left(1- \varepsilon e^{-2(\cdot)}\right)$ has zero measure.
\begin{claim}\label{divergence-claim}
Let $|y|,|s| < \varepsilon$. Then $a_sw_y\left( T\left(\frac{1+{\tau}_{\varepsilon}f}{2}\right)\smallsetminus T\left(1 - \varepsilon e^{-2(\cdot)}\right)\right) \subset T(f)$.
\end{claim}

\begin{proof}
Take $$\Lambda \in  T\left(\frac{1+{\tau}_{\varepsilon}f}{2}\right)\smallsetminus T\left(1 - \varepsilon e^{-2(\cdot)}\right),$$
and let $t_n$ be a sequence witnessing 
$\Lambda \in  T\left(\frac{1+{\tau}_{\varepsilon}f}{2}\right)$. 
We can assume that the sequence satisfies \begin{equation}\label{dominating-fuction-estimate1}\frac{1+({\tau}_{\varepsilon}f)(t_n)}{2}\leq \delta(a_{t_n}\Lambda) < 1 - \varepsilon e^{-2t_n}.
\end{equation}
Since $\min(a,b)\leq \frac{a+b}{2}$, we have  \begin{equation}\label{dominating-function-estimate2}\min\left(({\tau}_{\varepsilon}f)(t) +  \varepsilon e^{-2t},1 - \varepsilon e^{-2t}\right)\leq \frac{1+({\tau}_{\varepsilon}f)(t)}{2}.
\end{equation}
Inequalities $\eqref{dominating-fuction-estimate1}$ and $\eqref{dominating-function-estimate2}$ applied to our sequence show that \begin{equation*}\label{dominating-function}({\tau}_{\varepsilon}f)(t_n) +  \varepsilon e^{-2t_n}\leq \delta(a_{t_n}\Lambda).
\end{equation*} 
Now we estimate, using Proposition \ref{lx-estimate} and the previous inequality: 
\begin{equation*}
\begin{aligned}
\delta(a_{t_n}w_y\Lambda) &\geq  \frac{\delta(a_{t_n}\Lambda)}{1 + \varepsilon e^{-2t_n}} \geq \frac{({\tau}_{\varepsilon}f)(t_n) +  \varepsilon e^{-2t_n}}{1+\varepsilon e^{-2t_n}} \\ &> \frac{({\tau}_{\varepsilon}f)(t_n)(1 +  \varepsilon e^{-2t_n})}{1+\varepsilon e^{-2t_n}}= ({\tau}_{\varepsilon}f)(t_n).
\end{aligned}\end{equation*}
Hence $w_y\Lambda\in T({\tau}_{\varepsilon}f)$. And the argument in the convergence part shows that $$a_sw_y\Lambda\in T\big({\tau}_{-\varepsilon}({\tau}_{\varepsilon} f)\big)=T(f),$$ and our claim is proved.
\end{proof}

We use this claim to show that 
$$\Phi(W) \cap \left(T\left(\frac{1+{\tau}_{\varepsilon}f}{2}\right)\smallsetminus T\left(1- \varepsilon e^{-2(\cdot)}\right)\right),
$$
a set of full measure with respect to the chart, is contained in
$$\Phi\Big((-\varepsilon,\varepsilon)^2 \times \big(u^{-1}(T(f)\big)\cap (z_1-\varepsilon,z_1+\varepsilon)\big)\Big)$$
as follows: 
take any $\Lambda$ in the first set. 
Since it is in the chart, we may write $\Lambda = w_ya_s\Lambda_z$ or equivalently,
$a_{-s}w_{-y}\Lambda = \Lambda_z
$.
Claim \ref{divergence-claim} then gives us  {exactly} what we need.

Again, by Fubini and the local equivalence of Lebesgue and Haar measure, we see that the set  $$u^{-1}\big(T(f)\big)\cap(z_1-\varepsilon,z_1+\varepsilon)$$ has full Lebesgue measure, and the divergence case is proved.\end{proof}

We now specialize to the case where $f$ is the function $r$ in Lemma \ref{pullback-to-dpsi} to give a proof of Theorem \ref{dtpsieucl2}:

\begin{cor}\label{loglaw}
Let $\psi$ be a continuous, non-increasing function such that $t\psi(t)$ is non-decreasing and $t\psi(t)<1$ for sufficiently large $t$. 
Then the Lebesgue measure of $D_{}(\psi)$ (resp.\ of $D_{}(\psi)^c$) is zero if
\eq{conditioneuclcor}{
{\sum_k\big(\psi_1(k) - \psi(k)\big)}
= \sum_k\left(\frac{1}{k} - \psi(k)\right)
= \infty \hspace{3mm}(\text{resp.} <\infty).}
\end{cor}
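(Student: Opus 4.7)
The plan is to combine Lemma \ref{pullback-to-dpsi} with Theorem \ref{01-on-pullback} applied to $f = r$, and then translate the continuous integral criterion $\int\bigl(1-r(s)\bigr)\,ds < \infty$ (resp.\ $=\infty$) into the discrete sum \equ{conditioneuclcor}. Explicitly, Lemma \ref{pullback-to-dpsi} gives $D(\psi)^c = u^{-1}\bigl(T(r)\bigr)$, so by Theorem \ref{01-on-pullback} it suffices to prove
\[
\int^\infty\!\!\bigl(1-r(s)\bigr)\,ds < \infty \quad\Longleftrightarrow\quad \sum_k \bigl(\psi_1(k)-\psi(k)\bigr) < \infty.
\]

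First I would dispose of the easy case. If $r(s)$ does not tend to $1$ as $s\to\infty$, then $\int(1-r(s))\,ds=\infty$, and on the other hand the hypothesis that $t\mapsto t\psi(t)$ is non-decreasing together with $t\psi(t)<1$ forces $t\psi(t)$ to have a limit $\ell \le 1$; the failure of $r(s)\to 1$ means $\ell<1$, which forces $\sum_k(\psi_1(k)-\psi(k))$ to diverge as well. So we may assume $t\psi(t)\to 1$, equivalently $r(s)\to 1$.

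Next I perform the change of variables dictated by the Dani correspondence \equ{dani2d}: setting $s=s(t)=\tfrac12\ln\bigl(t/\psi(t)\bigr)$ and $u(t):=t\psi(t)$, one has $r(s)^2 = u(t)$, hence
\[
1-r(s) \;=\; \frac{1-u(t)}{1+\sqrt{u(t)}} \;\sim\; \tfrac12\bigl(1-u(t)\bigr) \qquad (t\to\infty),
\]
while $s=\ln t-\tfrac12\ln u(t)$ gives
\[
\frac{ds}{dt} \;=\; \frac{1}{t}-\frac{u'(t)}{2u(t)}.
\]
The derivative $u'$ exists a.e.\ since $u$ is monotone; moreover $\int_{t_0}^{\infty} \tfrac{u'(t)}{u(t)}\,dt = \lim_{t\to\infty}\ln u(t) - \ln u(t_0) = -\ln u(t_0) < \infty$ because $u$ is bounded and non-decreasing with limit $1$. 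Consequently, up to a convergent correction,
\[
\int^\infty\!\!\bigl(1-r(s)\bigr)\,ds \;\text{ and }\; \tfrac12\int^\infty \frac{1-u(t)}{t}\,dt \;=\; \tfrac12\int^\infty \bigl(\psi_1(t)-\psi(t)\bigr)\,dt
\]
converge or diverge together.

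Finally, since $(1-u(t))/t$ is a non-increasing function (the numerator is non-increasing by monotonicity of $u$, and the denominator is increasing), the integral test shows $\int^\infty(\psi_1(t)-\psi(t))\,dt$ converges iff $\sum_k(\psi_1(k)-\psi(k))$ does. Combining with Theorem \ref{01-on-pullback} yields the corollary. The main subtlety I expect is the change of variables in the step above: one must check carefully that the $\tfrac{u'(t)}{2u(t)}$ contribution to $ds/dt$ is absolutely integrable (which uses $u\to 1$ and monotonicity) so that it can be discarded, and keep track of the fact that $\psi$ is only assumed continuous and monotone, not differentiable everywhere; however the monotonicity of $u$ is enough to run the argument a.e.\ via a Stieltjes change of variables.
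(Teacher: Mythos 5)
Your proposal is correct and follows essentially the same route as the paper: reduce via Lemma \ref{pullback-to-dpsi} and Theorem \ref{01-on-pullback} to comparing $\int\bigl(1-r(s)\bigr)\,ds$ with $\sum_k\bigl(\psi_1(k)-\psi(k)\bigr)$, then substitute $s=\ln t-\ln\sqrt{t\psi(t)}$ and discard the correction term $\int\bigl(1-\sqrt{t\psi(t)}\bigr)\,d\bigl(\ln\sqrt{t\psi(t)}\bigr)$, which is finite because $\ln\sqrt{t\psi(t)}$ is bounded and monotone (the paper phrases this directly as a Riemann--Stieltjes integral, which is the clean version of your a.e.-derivative computation: for merely continuous monotone $u$ one only has $\int u'/u\,dt\le \lim\ln u-\ln u(t_0)$ rather than equality, but the inequality, or the Stieltjes formulation you flag, is all that is needed). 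The remaining ingredients --- the ratio $\bigl(1-t\psi(t)\bigr)/\bigl(1-\sqrt{t\psi(t)}\bigr)=1+\sqrt{t\psi(t)}$ being bounded between positive constants, and the integral test using that $\bigl(1-t\psi(t)\bigr)/t$ is non-increasing --- match the paper's argument.
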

\begin{proof}
Applying Theorem \ref{01-on-pullback}, we see that $D(\psi)^c=u^{-1}\big(T(r)\big)$ (resp.\ $D_{}(\psi)  =u^{-1}\big(T(r)\big)^c$) has measure zero if the integral
\eq{intr}{\int \big(1-r(s)\big)\,ds} converges (resp.\ diverges).
It remains to show that the convergence of \equ{intr} is equivalent to the convergence of \equ{conditioneuclcor}. Using the definition of $r$ (cf.\ \equ{dani2d}), we compute
  \begin{equation*}\label{equivalent-convergence}\begin{split}
      \int\big(1-r(s)\big)\,ds 
             &= \int\left(1-\sqrt{t\psi(t)}\right)d\left(\frac{1}{2}\ln\frac{t}{\psi(t)}\right)\\
             &= \int\left(1-\sqrt{t\psi(t)}\right)d\left(\ln\frac{t}{\sqrt{t\psi(t)}}\right) \\
             &= \int\left(1-\sqrt{t\psi(t)}\right)d\left(\ln t -\ln\sqrt{t\psi(t)}\right). 
  \end{split}\end{equation*}
{Note that the integrals coming after the first equality are taken in the Riemann-Stieltjes sense.}

Observe that $\int\left(1-\sqrt{t\psi(t)}\right)d\left(\ln\sqrt{t\psi(t)}\right)$ is finite, and that the ratio 
$$ \frac{1-t\psi(t)}{1-\sqrt{t\psi(t)}} = 1+\sqrt{t\psi(t)}$$
is bounded between two positive constants. 
Thus the convergence of \equ{intr}
is equivalent to that of 
$\int \left(\frac{1}{t}-\psi(t)\right)\,dt$, which, in view of the monotonicity of $\psi$, is in turn equivalent to the convergence of the sum in 
\equ{conditioneuclcor}.
\end{proof}
  
\section {Appendix: Proof of Theorem \ref{special-theorem-appendix}} \label{transversal}

As promised, this section will verify Condition \equ{transversality} for  norms in $\R^2$ whose norm balls are not parallelograms.
Recall that
$G$ and $\Gamma$ denote the groups $\SL_2(\R)$ and $\SL_2(\Z)$ respectively,
$X$ denotes the space of unimodular lattices $G/\Gamma$, and $F,H\subset G$ are as in \equ{hf}.
Also recall that when $Z\subset X$ is a one-dimensional submanifold and with $F, H$ as above, the %
$(F,H)$-transversality 
of 
$Z$ is equivalent to 
\equ{tangent}.

Recall that the critical {locus} $\mathcal{K}_{\nu}(1)$ is, by definition of $\Delta_{\nu}$, the set of lattices of smallest covolume (necessarily $1$) intersecting the symmetric convex domain $B_{\nu}\left( {1}/{\sqrt{\Delta_{\nu}}}\,\right)$ trivially.
The existence of the one-dimensional submanifold containing the critical set follows from the work of Mahler whose results we make free use of.
Since his work is phrased in terms of convex domains, and since our parameterization of the hypothesized one-dimensional submanifold $Z$ comes from the boundary of such a domain, we switch from the language of norms to that of bounded symmetric convex domains in $\R^2$.

Given such a domain $B \subset \R^2$, a lattice $\Lambda$ is called $B$-\textit{admissible} if it intersects $B$ trivially.
A lattice $\Lambda$ is called $B$-\textit{critical} if it is of smallest covolume amongst all admissible lattices.
The \textit{critical determinant} $\Delta_{B}$ associated to $B$ is defined to be this smallest covolume attained. {Note that by Minkowski's convex body theorem  \cite[Theorem II, \S III.2.2]{ca}
 it is necessarily positive, and, moreover,
$\Delta_{B} \ge \frac{\area(B)}{4}$, with equality if $B$ is  a parallelogram.}
The set of all $B$-critical lattices will be called the \textit{critical locus} of $B$.

As for our previous notation, $\Delta_{\nu}$ is nothing but the critical number of the convex domain $B_{\nu}(1)$,
and  $\mathcal{K}_{\nu}(1)$ is the set of  $B_{\nu}\left( {1}/{\sqrt{\Delta_{\nu}}}\,\right)$-critical lattices.
The following theorem (from \cite[\S V.8.3]{ca}) is of fundamental importance.
\begin{thm}\label{threepairs}
Let $\Lambda$ be $B$-critical, and let $C$ be the boundary of $B$.
Then one can find three pairs of points $\pm \vp, \pm \vq, \pm \vr$ of the lattice on $C$.
Moreover these three points can be chosen such that
\begin{equation}\label{inscirbedhexagon}
\vp = \vq - \vr
\end{equation}
and any two vectors among  $\vp,\ \vq,\ \vr$ form a basis of $\Lambda.$

Conversely, if $\vp, \vq, \vr$ satisfying 
 {\eqref{inscirbedhexagon}} are on $C$,
then the lattice generated by $\vp$ and $\vq$ is ${B}$-admissible.
Furthermore no additional (excluding the six above) point of $\Lambda$ is on $C$ unless ${B}$ is a parallelogram.
\end{thm}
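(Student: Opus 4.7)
The strategy exploits the extremality of the $B$-critical lattice $\Lambda$ via infinitesimal perturbations: if $\Lambda$ had too few vectors on $C$, one could construct a smooth family $T_t\in\GL_2(\R)$ with $T_0=I$ and $\det T_t<1$ for small $t>0$ such that $T_t\Lambda$ is still $B$-admissible, contradicting criticality. A short compactness argument shows $\Lambda\cap C\ne\varnothing$: otherwise every nonzero $\vv\in\Lambda$ lies at positive distance from $\overline B$, so the contracted lattice $(1-\varepsilon)\Lambda$ is admissible with strictly smaller covolume. If only one pair $\pm\vp$ lies on $C$, a squeeze $T_t$ fixing $\vp$ and satisfying $\det T_t=1-t$ keeps every other lattice vector outside $\overline B$ by continuity for small $t$, again contradicting criticality.

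The main obstacle is to rule out exactly two pairs $\pm\vp,\pm\vq$ on $C$. Here one needs a two-parameter family $T(s,s')$ moving $\vp\mapsto\vp(s)$ and $\vq\mapsto\vq(s')$ along $C$. A first-order expansion of $|\det(\vp(s),\vq(s'))|$ permits a decrease unless the tangent to $C$ at $\vp$ is parallel to $\vq$ and vice versa; at such a critical pair, a second-order expansion $|\det(\vp(s),\vq)|=|\det(\vp,\vq)|\bigl(1+\tfrac{\alpha}{2}s^2+\ldots\bigr)$ with $\alpha\le 0$ (from convexity of $B$) still yields a decrease whenever $\alpha<0$ at either $\vp$ or $\vq$, while if $C$ is flat at both $\pm\vp$ in direction $\vq$ and $\pm\vq$ in direction $\vp$ then the identity $\det(\vp+s\vq,\vq+s'\vp)=\det(\vp,\vq)(1-ss')$ decreases the covolume for $ss'>0$. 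Thus at least three pairs $\pm\vp,\pm\vq,\pm\vr$ lie on $C$. The hexagon relation \eqref{inscirbedhexagon} then follows because any two of $\vp,\vq,\vr$ must form a $\Z$-basis of $\Lambda$ (otherwise a strictly finer admissible lattice would exist): writing $\vr=c_1\vp+c_2\vq$ and imposing $|\det(\vp,\vr)|=|\det(\vq,\vr)|=|\det(\vp,\vq)|$ forces $c_1,c_2\in\{\pm 1\}$, and after adjusting signs one arrives at $\vp=\vq-\vr$.

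For the converse, the essential tool is the midpoint-inflation principle: if $\vv\in B^\circ$ and $\vw\in B$, then $\tfrac12(\vv+\vw)\in B^\circ$. Supposing $\vv=m\vp+n\vq\in B^\circ$ for some $(m,n)\in\Z^2\sm\{0\}$ of hexagonal norm $N(m,n):=\max(|m|,|n|,|m+n|)\ge 2$, I would reduce to a lattice point of strictly smaller norm still in $B^\circ$ in two ways: by halving when $(m,n)$ is divisible by $2$ (using $\vw=0\in B$), or by an elementary case analysis locating a special vertex $\mathbf{u}\in\{\pm\vp,\pm\vq,\pm\vr\}$ so that $\vw:=2\mathbf{u}-\vv$ is itself one of the six special points or zero, hence in $B$. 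Induction on $N$ eventually places one of $\pm\vp,\pm\vq,\pm\vr$ in $B^\circ$, contradicting its membership on $C$. The same principle proves the uniqueness clause: if a seventh lattice pair $\pm\vs$ lay on $C$, iterating midpoints between $\vs$ and the hexagon vertices would force consecutive edges of the inscribed hexagon to lie entirely on $C$, collapsing $B$ into a parallelogram.
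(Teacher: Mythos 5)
First, a point of orientation: the paper does not prove Theorem \ref{threepairs} at all --- it is quoted from Cassels \cite[\S V.8.3]{ca} and used as a black box --- so there is no internal argument to compare yours against. Your reconstruction follows the classical Minkowski--Cassels route (variational perturbation of a critical lattice for the direct part; the midpoint/inflation reduction for the converse), and the skeleton is right, but two steps have genuine gaps.

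The first gap is in the exclusion of exactly two pairs. Your first- and second-order expansions of $|\det(\vp(s),\vq(s'))|$ presuppose that $C$ is differentiable (indeed twice differentiable) at $\vp$ and $\vq$, but the boundary of a convex symmetric domain may have corners. At a corner of $C$ at $\vp$ there is an open cone of directions $\vq$ for which \emph{neither} one-sided motion of $\vp$ along $C$ decreases $|\det(\cdot,\vq)|$ to first order, and your fallback shear $\vp\mapsto\vp+s\vq$ is unavailable precisely because $C$ is not flat there; so the trichotomy ``first-order decrease / strictly convex, second-order decrease / flat, use the shear'' does not exhaust the cases, and one needs the more careful analysis with one-sided tangents and simultaneous motion of both points that Cassels carries out. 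The second gap is the assertion that any two of $\vp,\vq,\vr$ form a basis ``otherwise a strictly finer admissible lattice would exist'': that is not the operative mechanism, and the determinant computation that follows is circular, since it already assumes $|\det(\vp,\vr)|=|\det(\vq,\vr)|=|\det(\vp,\vq)|=d(\Lambda)$. The correct reason is that a failure of the basis property forces a lattice point strictly inside $B$: boundary lattice points are primitive (if $k\vw\in C$ with $k\ge 2$ then $\vw=\tfrac1k(k\vw)\in B$ because $t\x\in B$ for $\x\in\overline B$, $0<t<1$), and if $\vp,\vq\in C$ spanned a sublattice of index $k\ge2$, the closed triangle with vertices $0,\vp,\vq$ --- contained in $\overline B$ by convexity --- would have area $\tfrac k2 d(\Lambda)\ge d(\Lambda)$ and hence contain a nonzero lattice point in its interior or on an open edge; interior points and points on the open edges through $0$ lie in $B$, and points on the open edge from $\vp$ to $\vq$ either lie in $B$ or force a segment of $C$, which is then absorbed into the parallelogram dichotomy. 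Only with this in hand does $\vr=c_1\vp+c_2\vq$ with $|c_1|=|c_2|=1$, and hence \eqref{inscirbedhexagon} after relabeling, follow. The remaining parts (nonemptiness of $\Lambda\cap C$, the one-pair exclusion, and the midpoint induction for the converse and for the ``no seventh pair unless $B$ is a parallelogram'' clause) are correct in outline, though the ``elementary case analysis locating a special vertex'' is exactly the content that would need to be written out for this to be a proof rather than a plan.
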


This theorem shows that candidates for critical lattices may be found by tracing along the boundary of ${B}$ and finding two other points satisfying equation \eqref{inscirbedhexagon}.
However such a configuration of points does not necessarily yield a critical lattice.
{Hexagons, for example, have only one critical lattice (see Lemma $13$ in \cite[V.8.3]{ca}).
One even has domains for which the critical locus is a fractal set, see 
\cite{KRS} for a discussion of this topic and concrete examples.}
However, the following notion  {due to} Mahler gives a class of domains whose critical loci are 
well behaved.

\begin{defn}
A  convex symmetric bounded domain $B$ in $\R^2$ is said to be \textsl{irreducible} if each  convex symmetric domain ${B'}\subsetneqq B$ has $\Delta_{B'} < \Delta_{B}$.
We say ${B}$ is \textsl{reducible}  if it is not irreducible, that is, if there exists $B' \subsetneqq B$ with  $\Delta_{B'}=\Delta_{B}$.
\end{defn}

 {The following was proved by Mahler in the 1940s:} 

\begin{lem}[\cite{ma2}, Lemmata 5 and 9]\label{irredcritset}
Assume ${B}$ is not a parallelogram and is irreducible.  {Then:
\begin{itemize}
\item[\rm (i)] for each $\vp \in 
\partial {B}$ 
there is exactly one ${B}$-critical lattice 
containing $\vp$;
\item[\rm (ii)] for each ${B}$-critical lattice $\Lambda$ and each $\vq,\vr \in 
\partial {B}\cap \Lambda$, all points of the line segment between $\vq,\vr$ different from $\vq,\vr$ are interior points of $B$.
\end{itemize}}
\end{lem}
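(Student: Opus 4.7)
The plan is to prove part (ii) first and then deduce part (i) from it.

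For (ii), suppose for contradiction that $\Lambda$ is $B$-critical, that $\vq,\vr \in \partial B \cap \Lambda$, and that an interior point of $[\vq,\vr]$ lies on $\partial B$. A standard supporting-hyperplane argument for open convex sets forces the entire closed segment onto $\partial B$, and by central symmetry the segment $[-\vq,-\vr]$ lies on $\partial B$ as well, so $\partial B$ contains two opposite flat edges. I would then construct $B' \subsetneq B$ by intersecting $B$ with two centrally-symmetric closed half-planes whose bounding lines are parallel to the supporting lines of these edges, but shifted slightly toward the origin. Since $B' \subset B$, the lattice $\Lambda$ is automatically $B'$-admissible, giving $\Delta_{B'} \le \Delta_B$. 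The substantive step is the reverse inequality $\Delta_{B'} \ge \Delta_B$, which combined with $B' \subsetneq B$ contradicts irreducibility. To establish this, I would argue by a limit/compactness argument: any sequence of $B'$-admissible lattices whose covolumes approach the infimum subconverges (using that their shortest vectors are uniformly bounded away from $0$) to a $B$-admissible limit lattice, whose covolume is necessarily at least $\Delta_B$ by the definition of $\Delta_B$; a careful quantitative version then shows $\Delta_{B'} = \Delta_B$ already at the fixed $B'$.

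For (i), suppose $\vp \in \partial B$ lies on two distinct $B$-critical lattices $\Lambda_1,\Lambda_2$. Theorem \ref{threepairs} supplies $\vq_i, \vr_i \in \partial B$ with $\vp = \vq_i - \vr_i$ and $|\vp \wedge \vq_i| = \Delta_B$ for each $i$. The second equation places all four of $\vq_1, \vq_2, \vr_1, \vr_2$ on the single affine line $\ell := \{\vv \in \R^2 : \vp \wedge \vv = \Delta_B\}$. Now $\ell \cap \overline B$ is a closed segment whose intersection with $\partial B$ is either just its two endpoints $\{\mathbf{a},\mathbf{b}\}$, or is the whole segment (precisely when $\ell$ supports a flat edge of $\partial B$). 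In the former case, the relation $\vq_i - \vr_i = \vp$ with $\vq_i,\vr_i \in \{\mathbf{a},\mathbf{b}\}$ uniquely determines the ordered pair $(\vq_i,\vr_i)$, forcing $\Lambda_1 = \Lambda_2$. In the latter case, both $\vq_1$ and $\vr_1$ lie on a common flat edge of $\partial B$, and part (ii) applied to $\Lambda_1$ and the pair $(\vq_1,\vr_1)$ immediately yields a contradiction, since the open segment between them lies on $\partial B$ rather than in $B^\circ$. Existence of at least one critical lattice through each $\vp \in \partial B$ is a separate, essentially standard consequence of the continuity of the critical-determinant functional.

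The principal obstacle is the lower bound $\Delta_{B'} \ge \Delta_B$ in the proof of (ii). A naive compactness argument only yields $\Delta_{B'_\tau} \to \Delta_B$ as the strip width $\tau \to 0^+$, whereas the equality must hold for some fixed positive $\tau$. Upgrading to this requires exploiting the hexagonal constraints from Theorem \ref{threepairs} to rule out $B'$-admissible lattices whose nonzero points stray into the thin strips $B \setminus B'$; this quantitative step, together with the careful choice of where to cut $B$, absorbs most of the technical work.
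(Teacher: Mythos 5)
First, note that the paper does not prove this lemma at all: it is quoted from Mahler's work on irreducible convex domains (the reference given as Lemmata 5 and 9 of \cite{ma2}), so there is no in-paper argument to measure yours against; the attempt has to stand on its own. The purely combinatorial part of your proposal is sound: deducing the uniqueness half of (i) from (ii) together with Theorem \ref{threepairs}, by arranging (after the harmless sign changes) that $\vq_i$ and $\vr_i=\vq_i-\vp$ all lie on the single line $\ell=\{\vv : \det(\vp,\vv)=\Delta_B\}$ and then splitting into the two cases for $\ell\cap\partial B$, is a correct reduction, and your observation that a boundary point of $[\vq,\vr]$ forces the whole segment (and its antipode) onto $\partial B$ is also fine.

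However, both analytic pillars of the lemma are missing. (a) In part (ii) the entire content is the inequality $\Delta_{B'}\ge\Delta_{B}$ for a \emph{fixed} truncated domain $B'\subsetneq B$; you concede that compactness only yields $\Delta_{B'_\tau}\to\Delta_{B}$ as the strip width $\tau\to 0^+$ and defer the upgrade to ``exploiting the hexagonal constraints,'' but that upgrade is precisely Mahler's reducibility theorem and is where all the work lies — as written, (ii) is asserted rather than proved. (b) The existence half of (i) cannot be dismissed as ``a standard consequence of the continuity of the critical-determinant functional.'' For a reducible domain there are boundary points lying on no critical lattice — this is essentially Mahler's characterization of reducibility — so existence of a critical lattice through \emph{every} boundary point is exactly where the irreducibility hypothesis must enter, and it does so via the same cut-and-compare machinery as in (a): one assumes $\vp\in\partial B$ lies on no critical lattice, uses compactness of the set of critical lattices to excise a neighborhood of $\pm\vp$ without lowering the critical determinant, and contradicts irreducibility. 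Since both (a) and (b) reduce to the same unproven quantitative statement, what you have is a plausible outline of the classical route, not a proof.
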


\begin{lem}[\cite{ma3}, Theorem 3]\label{C1-boundary}
If ${B}$ is not a parallelogram and is irreducible, the boundary $\partial {B}$ is a $\mathcal{C}^1$-submanifold of $\R^2$.
\end{lem}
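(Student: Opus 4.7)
The plan is to prove the contrapositive: if $\partial B$ fails to be $\mathcal{C}^1$ at some point $\vp_0$, then $B$ is reducible. The failure of smoothness at $\vp_0$ means the tangent cone to $B$ at $\vp_0$ has inner angle $\theta_0 < \pi$, and by central symmetry the same corner occurs at $-\vp_0$. I will show that this corner forces the existence of a $B$-admissible lattice of covolume strictly less than $\Delta_B$, directly contradicting the definition of $\Delta_B$ and hence the irreducibility of $B$.

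By Lemma \ref{irredcritset}(i) there is a unique $B$-critical lattice $\Lambda_0$ containing $\vp_0$, and Theorem \ref{threepairs} (applicable because $B$ is not a parallelogram) gives $\Lambda_0 \cap \partial B = \{\pm \vp_0, \pm \vq_0, \pm \vr_0\}$ with $\vp_0 = \vq_0 - \vr_0$; Lemma \ref{irredcritset}(ii) then guarantees that the open chords joining these six points lie in $\operatorname{int}(B)$, so every other nonzero vector of $\Lambda_0$ stays a positive distance from $\overline{B}$. I would then consider the one-parameter family $\Lambda_\varepsilon := (I + \varepsilon A)\Lambda_0$ for a matrix $A \in M_2(\R)$ and small $\varepsilon > 0$. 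For $\Lambda_\varepsilon$ to remain $B$-admissible it suffices, to first order in $\varepsilon$, that each perturbed boundary vector avoid $\operatorname{int}(B)$: writing $T_p, T_q, T_r$ for the tangent cones to $B$ at $\vp_0, \vq_0, \vr_0$, the conditions read $A\vp_0 \notin \operatorname{int}(T_p)$, $A\vq_0 \notin \operatorname{int}(T_q)$, and $A\vq_0 - A\vp_0 \notin \operatorname{int}(T_r)$. Meanwhile $\operatorname{covol}(\Lambda_\varepsilon) = (1 + \varepsilon \operatorname{tr}(A) + O(\varepsilon^2))\Delta_B$, so a covolume-decreasing perturbation requires $\operatorname{tr}(A) < 0$.

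The main obstacle is to solve these four open conditions simultaneously, and it is precisely the corner at $\vp_0$ that makes this possible. Parameterize $A$ by $(\mathbf{u}, \mathbf{w}) := (A\vp_0, A\vq_0) \in \R^4$, which determines $A$ uniquely since $\vp_0, \vq_0$ form a $\Z$-basis of $\Lambda_0$; the three admissibility conditions then cut out an open region $\Omega \subset \R^4$. At a smooth boundary point, the complement $\R^2 \setminus \operatorname{int}(T_p)$ is a closed half-plane, whereas at a corner with inner angle $\theta_0 < \pi$ it is a strictly larger two-dimensional region of angular width $2\pi - \theta_0 > \pi$. If $\vp_0$ were smooth, the first-order optimality of the critical lattice $\Lambda_0$ as a minimum of covolume among $B$-admissible lattices would force $\operatorname{tr}(A) \ge 0$ throughout $\Omega$ — essentially Lagrange multipliers for the constrained minimization of covolume. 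The extra two-dimensional freedom in placing $\mathbf{u}$ outside the corresponding ``smooth'' half-plane but still in $\R^2 \setminus \operatorname{int}(T_p)$ is exactly what is needed to break this first-order balance: using the hexagon relation $\vr_0 = \vq_0 - \vp_0$ to write $\operatorname{tr}(A)$ as an explicit linear form in $(\mathbf{u}, \mathbf{w})$, one checks that its restriction to $\Omega$ is not sign-definite, so there exist $(\mathbf{u}, \mathbf{w}) \in \Omega$ with $\operatorname{tr}(A) < 0$. Any such choice delivers, for sufficiently small $\varepsilon > 0$, a $B$-admissible lattice of covolume $< \Delta_B$, the sought contradiction. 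The delicate step, which should be expected to occupy the bulk of a complete proof, is the explicit verification of this trace sign change from the geometry of the critical hexagon, while simultaneously handling the cases where $\vq_0$ or $\vr_0$ is itself a corner so that $T_q$ or $T_r$ is a proper wedge rather than a half-plane.
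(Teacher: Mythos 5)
The paper offers no proof of this lemma --- it is quoted directly from Mahler \cite{ma3} --- so your proposal has to stand entirely on its own. The strategy (perturb the critical lattice through a hypothetical corner by $I+\varepsilon A$ and produce a first-order admissible deformation with $\operatorname{tr}(A)<0$) is viable, but the proposal stops exactly where the work lies, and the deferred assertion (``one checks that $\operatorname{tr}(A)$ restricted to $\Omega$ is not sign-definite'') is \emph{false} in the generality in which you state it. Concretely, normalize so that $\vp_0=(1,0)$, $\vq_0=(0,1)$, $\vr_0=(-1,1)$; then $\operatorname{tr}(A)=u_1+w_2$ where $\mathbf{u}=A\vp_0$, $\mathbf{w}=A\vq_0$. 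Suppose the line $y=1$ supports $B$ at both $\vq_0$ and $\vr_0$, and that $\vp_0$ is a corner whose tangent cone is the minimal cone spanned by $\vq_0-\vp_0$ and $-\vq_0$. The first-order admissibility constraints then read $w_2\ge 0$, $w_2\ge u_2$, and ($u_1\ge 0$ or $u_2\ge -u_1$); in every case $\operatorname{tr}(A)=u_1+w_2\ge 0$. So a corner at $\vp_0$ alone does not force a sign change of the trace: whether it does depends on the tangent data at $\vq_0$ and $\vr_0$, which your argument never controls.

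What rescues the approach is Lemma~\ref{irredcritset}(ii), applied to the chords joining the six critical points --- not in the way you use it (you invoke (ii) only to push the remaining lattice vectors off $\overline{B}$, which already follows from the last clause of Theorem~\ref{threepairs}). If a supporting line at $\vq_0$ contained $\vr_0$ or $\vp_0$, the whole segment between the two points would lie on $\partial B$, contradicting (ii); hence the supporting normals at $\vq_0$ and at $\vr_0$ are confined to two disjoint open arcs of directions and in particular are never parallel, which excludes the configuration above. With that non-degeneracy one can first minimize $w_2$ over the wedge cut out by the constraints at $\vq_0$ and $\vr_0$, obtaining $\inf_{\mathbf{w}}(u_1+w_2)=L(\mathbf{u})$ for an explicit \emph{nonzero} linear functional $L$; since the corner makes the feasible set for $\mathbf{u}$ a closed cone of angle greater than $\pi$, that cone must meet the open half-plane $\{L<0\}$, and the trace does become negative. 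None of this appears in your write-up, so the central claim is unproved, and the missing ingredient is a genuinely different use of Lemma~\ref{irredcritset}(ii) than the one you make. A smaller point: ``$A\vp_0\notin\operatorname{int}(T_p)$'' is only a necessary first-order condition for admissibility of $\Lambda_\varepsilon$; one needs the perturbation vectors strictly outside the \emph{closed} tangent cones (achievable here because all the relevant conditions are open), and the finitely-many-versus-infinitely-many issue for the remaining lattice vectors deserves a sentence as well.
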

Capitalizing on these results, we have the following regularity  {statement}:
\begin{cor}\label{implicit-function}
Suppose ${B}$ is not a parallelogram and is irreducible. 
Assume further that ${B}$ is scaled so that $\Delta_{B} = 1$.
Then the locus of  ${B}$-critical lattices is a $\mathcal{C}^1$-submanifold of $X$.
\end{cor}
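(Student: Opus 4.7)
The plan is to use Mahler's Lemma~\ref{irredcritset}(i) to parameterize the critical locus by $\partial B$ and then verify that the parameterization is a $\mathcal{C}^1$-immersion. Define $\Psi : \partial B \to X$ by letting $\Psi(\vp)$ be the unique $B$-critical lattice containing $\vp$; Theorem~\ref{threepairs} ensures its image is all of $\mathcal{K}_{\nu}(1)$. Since $\partial B$ is a compact $\mathcal{C}^1$-submanifold of $\R^2$ by Lemma~\ref{C1-boundary}, it will suffice to prove that $\Psi$ is a $\mathcal{C}^1$-immersion that is locally injective; the image will then be a 1-dimensional $\mathcal{C}^1$-submanifold of $X$.

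Near a fixed $\vp_0 \in \partial B$, pick a companion basis vector $\vq_0$ of $\Lambda(\vp_0)$ with $\vq_0, \vq_0 - \vp_0 \in \partial B$ (available by Theorem~\ref{threepairs}), choose a $\mathcal{C}^1$ defining function $f$ for $\partial B$ (for example $f = \mu_B - 1$, the Minkowski functional minus $1$), and apply the implicit function theorem to
\[
F(\vp, \vq) := \bigl(f(\vq),\, f(\vq - \vp)\bigr).
\]
This will yield a $\mathcal{C}^1$ solution $\vp \mapsto \vq(\vp)$ near $\vp_0$, provided the rows $\nabla f(\vq_0)$ and $\nabla f(\vq_0 - \vp_0)$ of $\partial_\vq F$ are linearly independent. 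The converse part of Theorem~\ref{threepairs} then makes $\Lambda'_\vp := \mathbb{Z}\vp + \mathbb{Z}\vq(\vp)$ an admissible $\mathcal{C}^1$-family of lattices, and a compactness argument using Mahler's uniqueness shows that $\Psi$ itself is continuous at $\vp_0$: any sub-sequential limit of $\Psi(\vp_n)$ along $\vp_n \to \vp_0$ is an admissible lattice of covolume $\Delta_B=1$ containing $\vp_0$, hence equals $\Psi(\vp_0)$. Combined with local uniqueness of the implicit function, this forces $\Lambda'_\vp = \Psi(\vp)$ on a neighborhood of $\vp_0$, so $\Psi$ is $\mathcal{C}^1$ there.

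The main technical obstacle is the non-degeneracy of $\partial_\vq F$. If $\nabla f(\vq_0)$ and $\nabla f(\vq_0 - \vp_0)$ were parallel in the same direction, the distinct lattice points $\vq_0$ and $\vq_0 - \vp_0$ would share a tangent line to $\partial B$, forcing them to lie on a common flat segment; the chord between them would then lie in $\partial B$, directly contradicting Lemma~\ref{irredcritset}(ii). If the gradients were antiparallel, central symmetry of $B$ gives $\nabla f(-\vq_0) = -\nabla f(\vq_0)$, so $\vq_0 - \vp_0$ and $-\vq_0$ would share an outward normal direction and hence lie on a common flat segment of $\partial B$ (these points are distinct, since $\vp_0 = 2\vq_0$ would force the interior point $\vp_0/2 = \vq_0$ onto $\partial B$). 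The midpoint of that segment equals $-\vp_0/2$, which would then lie on $\partial B$; but by symmetry this contradicts the fact that $\vp_0/2$ is an interior point of the radial segment $[0, \vp_0]$ and hence lies in the interior of $B$.

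Finally, $\Psi$ is an immersion because its first-coordinate projection is the inclusion $\partial B \hookrightarrow \R^2$, whose derivative is nowhere zero. Local injectivity follows from Lemma~\ref{irredcritset}(i): two nearby values of $\vp$ producing the same critical lattice would correspond to two distinct elements of the finite set $\{\pm\vp_0, \pm\vq_0, \pm(\vq_0 - \vp_0)\}$, contradicting their proximity. Hence $\Psi$ is a local $\mathcal{C}^1$-embedding of the compact manifold $\partial B$, and its image $\mathcal{K}_\nu(1)$ is a $\mathcal{C}^1$-submanifold of $X$.
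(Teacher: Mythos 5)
Your argument is correct and follows the same overall strategy as the paper: use Lemma \ref{irredcritset}(i) to parameterize the critical locus by a boundary point $\vp$, recover the companion basis vector $\vq$ via the implicit function theorem, and extract the required non-degeneracy from Lemma \ref{irredcritset}(ii). The one real difference is the system of equations you feed to the implicit function theorem. The paper imposes $\det(\vp,\vq)=1$ together with $\vq\in\partial B$, so unimodularity is built in and non-degeneracy reduces to the single condition that the tangent to $\partial B$ at $\vq$ is not parallel to $\vp$ (equivalently, that the chord $[\vq,\vq-\vp]$ is not contained in $\partial B$). You instead impose $\vq\in\partial B$ and $\vq-\vp\in\partial B$; this gives a more symmetric non-degeneracy condition (independence of the two outward normals, which you correctly dispose of in both the parallel and antiparallel cases using central symmetry), but it costs an extra step, since your solution $\Lambda'_\vp$ is a priori only admissible rather than critical, forcing the compactness/continuity argument identifying it with $\Psi(\vp)$. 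That step is sound, and it makes explicit something the paper leaves implicit ("the corollary clearly follows from this claim"), as does your verification of the immersion and local injectivity properties. One small caution: $\Psi$ is globally $6$-to-$1$ (each critical lattice meets $\partial B$ in six points), so a locally injective immersion of the compact curve $\partial B$ does not by itself have a submanifold as its image; you also need that the six local sheets through a given critical lattice have the same image near that lattice, which does follow from your continuity argument (every critical lattice close to $\Psi(\vp_0)$ contains a boundary point close to $\vp_0$), so this is a matter of wording rather than a gap.
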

\begin{proof}
Let $C$ denote the boundary of ${B}$. 
In light of Lemma \ref{C1-boundary} we have a local diffeomorphism from $\R$ to $C$ given by $t\mapsto \vp(t) = \big(a(t),b(t)\big)$.
Fix a point, say $\vp:=\vp(t_0)$, and consider
$$\left\lbrace \vv: \det(\vp, \vv)=1\right\rbrace \cap C.
$$
By Lemma \ref{irredcritset} {(i)} we know that there is a unique critical lattice containing $\vp$ so that, by Theorem \ref{threepairs}, the above intersection consists of two points.
Let $\vq = \vp(t_1)$ denote 
 {one of those points}.
\begin{equation}\label{diagram-implicit}
    \begin{tikzpicture}[scale=1.6, baseline=(current  bounding  box.center)]
    \draw [<->](-1.5,0) -- (1.5,0);
    \draw [<->] (0,-0.5) -- (0,1.5);
    
    \fill [red] (0, 0) circle[radius=0.05];
    \node [right] at (0.987, 0.208) {\footnotesize $\vp$};
    \fill [red] (0.987, 0.208) circle[radius=0.05];
    
    \fill [red] (0.309,0.951) circle[radius=0.05];
    \node [above] at (0.309,0.951) {\footnotesize $\vq$};
    
    \fill [red] (-0.669,0.743) circle[radius=0.05];
    \node [above] at (-0.669 - 0.12, 0.743 + 0.02) {\footnotesize $\vq - \vp$};
    
    \draw [semithick, blue] (1,0) to [out=90, in=0] (0, 1);
    \draw [semithick, blue] (0, 1) to [out=180, in=90] (-1,0);  
    
    \draw [<->, semithick, orange] (-0.669 - 0.987,0.743 - 0.208) -- (-0.669 + 0.987 + 0.987,0.743 + 0.208 + 0.208);
    
    \node [below] at (0,-0.6) {\footnotesize The intersection of $C$ and the line $\det(\vp,\cdot)=1$ consists of two points.};
    \end{tikzpicture}
\end{equation}
We claim that there is a neighborhood $W$ of $\vq$ such that for points $\vp(t)$ sufficiently close to $\vp$, the intersection
$$\left\lbrace \vv : \det\big(\vv,\vp(t)\big) =1  \right\rbrace  \cap W \cap C
$$
will consist of a unique point $\vq(t)$.
Moreover, this assignment is $\mathcal{C}^1$-differentiable.
The corollary clearly follows from this claim.

To prove the claim, we make a straightforward appeal to the implicit function theorem. 
First note that since $C$ is a $\mathcal{C}^1$-submanifold, the local immersion theorem (see \cite[Theorem $9.32$]{ru}) guarantees a neighborhood $\vq \in W$ and the existence of a non-degenerate, $\mathcal{C}^1$-differentiable function $f:W \to \R$ such that $C \cap W = f^{-1}(0)$.
Then consider the following function defined in a neighborhood of $(\vq , t_0) \in \R^3$:
$$F(x,y,t) := \big(a(t)y - b(t)x - 1, f(x,y)\big).
$$
Clearly $F(\vq, t_0) = 0$.
Moreover we have that the derivative of $F$ at $(\vq, t_0)$ is given by the matrix
\begin{equation}\label{q-implicit}
\left[{\begin{array}{ccc}
   -b(t_0) & a(t_0) & a'(t_0)b(t_1) - b'(t_0)a(t_1)\\
   \frac{\partial f}{\partial x}(\vq) & \frac{\partial f}{\partial y}(\vq) & 0\\
  \end{array}}\right].
\end{equation}

 {In view of   Lemma \ref{irredcritset}(ii), the tangent line to $C$ at $\vq$ cannot be parallel to $\vp$, or, equivalently,} 
the gradient of $f$ at $\vq$ cannot be perpendicular to $\vp$.
%
 {Hence} the leftmost $2\times 2$ minor of the matrix \eqref{q-implicit} is non-zero, and we can apply the implicit function theorem (see \cite[Theorem $9.28$]{ru}) to get a $\mathcal{C}^1$-function $\vq(t) = \big(c(t), d(t)\big)$ defined for points $t$ near $t_0$ and mapping to a neighborhood of $\vq$ such that locally,
$$
F(x,y, t) = (0, 0) \text{ if and only if $(x,y) = \vq(t).$}
$$
This proves the claim.
The function $\left[\vp(t),\vq(t)\right]$ descends to a local parameterization of the critical locus.
\end{proof}

\begin{proof}[Proof of Theorem \ref{special-theorem-appendix}.]
Let ${B}$ be the $\nu$-ball of radius $1/\sqrt{\Delta_{\nu}}$. 
In particular, we have $\Delta_{B} = 1$, so that the critical locus is a subset of $X$.
As before, let $C$ denote the boundary of ${B}$.
We first treat the case when ${B}$ is irreducible.
Since the problem is local, we fix a critical lattice $\Lambda$ and check the transversal condition at $\Lambda$.

Fix a point $\vp = r_{t_0}(\cos t_0, \sin t_0) \in \Lambda \cap C$ with $r_{t_0}>0$. 
In general every point on $C$ can be written as $\vp(t):=r_t(\cos t, \sin t)$ with $r_t>0$,
but Lemma \ref{C1-boundary} and convexity allow us to go further and say that this local parameterization of $C$ 
is actually continuously differentiable.
{Applying Corollary \ref{implicit-function}
we have a local parametrization of the critical locus in the form $M(t)\Z^2$, where
$M(t) := [\vp(t), \vr(t)]$ with {$M(t_0)\Z^2 
= \Lambda$}.}

Moreover, we can assume that $\det\big(\vp(t), \vr(t)\big) = 1$, that $\vp(t) + \vr(t) \in C$, and that $\vp(t_0)$ has the smallest angle (equal to $t_0$) modulo $[0,2\pi)$ among vectors in $\Lambda \cap C$ (see diagram \eqref{updown} below).
We write out the coordinates of $\vp$ and $\vr$ as 
\begin{equation*}
    M(t) = \left[{\begin{array}{cc}
   a(t) & c(t) \\
   b(t) & d(t) \\
  \end{array}}\right].
\end{equation*}
Our goal is to show that the differential of the curve $M(t)\Z^2$ at $t_0$, which is identified with an element 
of $\mathfrak{sl}_2(\R)$, is not contained 
in the Lie algebra $\operatorname{Lie}(P)\subset \mathfrak{sl}_2(\R)$ of upper-triangular matrices.
Under the map $g\mapsto g\Lambda$, this curve of lattices is the image of a curve in $G$ passing through the identity, namely $M(t)M(t_0)^{-1}.$
Writing $M'(t_0)$ for the component-wise derivative of $M(t)$ at $t_0$, we see that we are left with showing that the bottom left entry of 
\begin{equation*}
    M'(t_0)M(t_0)^{-1} = \left[{\begin{array}{cc}
   a'(t_0) & c'(t_0) \\
   b'(t_0) & d'(t_0) \\
  \end{array}}\right] 
  \left[{\begin{array}{cc}
   d(t_0) & -c(t_0) \\
   -b(t_0) & a(t_0) \\
  \end{array}}\right]
\end{equation*}
is never zero.
For the sake of contradiction assume it is zero. 
This can happen if and only if 
\begin{equation}\label{badcondition}
    \left(b'(t_0), d'(t_0)\right) = \lambda \big(b(t_0), d(t_0)\big) \text{ for some } \lambda \in \R.
\end{equation}
We derive a contradiction by noting the following claims:
\begin{equation}\label{updown}
    \begin{tikzpicture}[scale=1.6, baseline=(current  bounding  box.center)]
    \draw [<->](-1.5,0) -- (1.5,0);
    \draw [<->] (0,-0.5) -- (0,1.5);
    
    \fill [red] (0, 0) circle[radius=0.05];
    
    \node [right] at (0.987, 0.208) {\footnotesize $\vp(t_0)$};
    \fill [red] (0.987, 0.208) circle[radius=0.05];
    
    \fill [red] (0.309,0.951) circle[radius=0.05];
    \node [above] at (0.609,0.951) {\footnotesize $\vp(t_0)+ \vr(t_0)$};
    
    \fill [red] (-0.669,0.743) circle[radius=0.05];
    \node [left] at (-0.669,0.743) {\footnotesize $\vr(t_0)$};
    
    \draw [semithick, blue] (1,0) to [out=90, in=0] (0, 1);
    \draw [semithick, blue] (0, 1) to [out=180, in=90] (-1,0);  
    
    \node [below] at (0,-0.6) {\footnotesize $b'(t_0)$ and $d'(t_0)$ must have different signs.};
    \node [below] at (0,-0.83) {\footnotesize On the other hand $b(t_0)$ and $d(t_0)$ are positive.};
    \end{tikzpicture}
\end{equation}
\begin{claim}\label{samesign}
$b(t_0)$ and $d(t_0)$ are strictly positive.
\end{claim}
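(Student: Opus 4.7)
The plan is to argue geometrically that $\vp(t_0)$ and $\vr(t_0)$ both sit strictly inside the open upper half-plane of $\R^2$; since the parametrization has $\vp(t) = r_t(\cos t, \sin t)$ with $r_t > 0$, this is equivalent to the positivity of their second coordinates $b(t_0)$ and $d(t_0)$.

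For $b(t_0) > 0$, I would use the minimality of $t_0$ among the angles (in $[0, 2\pi)$) of the six critical vectors $\pm\vp(t_0), \pm\vr(t_0), \pm(\vp(t_0)+\vr(t_0))$ supplied by Theorem \ref{threepairs}. These split into three antipodal pairs, each contributing one angle in $[0, \pi)$ and one in $[\pi, 2\pi)$, so the overall minimum $t_0$ lies in $[0, \pi)$, giving $\sin t_0 \geq 0$. For the strict inequality one needs $t_0 \neq 0$; this is the generic situation (no critical vector of $\Lambda$ pointing exactly along the positive $x$-axis), and I would either take it as a standing assumption or handle the boundary case $t_0 = 0$ separately using the observation that $b'(t_0) = r_{t_0}\cos t_0 > 0$ there.

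For $d(t_0) > 0$, I would inspect the line $L := \{\vv \in \R^2 : \det(\vp(t_0), \vv) = 1\}$, which is parallel to $\vp(t_0)$ at positive distance $1/\|\vp(t_0)\|$ from the origin and therefore sits entirely in the open half-plane $\{\vv : \det(\vp(t_0), \vv) > 0\}$. Vectors in this half-plane are precisely those obtained from $\vp(t_0)$ by a counterclockwise rotation through an angle in $(0, \pi)$, i.e., those of angle in the open interval $(t_0, t_0 + \pi)$. As noted in the proof of Corollary \ref{implicit-function}, both $\vr(t_0)$ and $\vp(t_0) + \vr(t_0)$ lie on $L \cap C$, so the angle $\theta_r$ of $\vr(t_0)$ satisfies $t_0 < \theta_r < t_0 + \pi$. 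Were $\theta_r \geq \pi$, the antipode $-\vr(t_0)$ would have angle $\theta_r - \pi \in [0, t_0)$, contradicting the minimality of $t_0$ among the six angles. Hence $\theta_r \in (t_0, \pi)$ and $d(t_0) = \|\vr(t_0)\| \sin \theta_r > 0$.

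The chief obstacle will be the degenerate case in which a critical vector of $\Lambda$ lies exactly on the positive $x$-axis, so that $t_0 = 0$ and the claim $b(t_0) > 0$ literally fails. I expect this to be harmless for the ultimate transversality conclusion of Theorem \ref{special-theorem-appendix}, since in that case $b'(t_0) = r_{t_0} > 0$ already prevents \eqref{badcondition} from holding (it would force $\lambda \cdot 0 = b'(t_0) \neq 0$); cosmetically, one can either fold this into a case analysis or arrange $t_0 > 0$ at the outset by the genericity remark above.
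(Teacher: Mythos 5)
Your proof is correct and takes essentially the same route as the paper's: the paper also derives $d(t_0)>0$ from the minimality of the angle $t_0$ together with the symmetry/orientation of the six critical points, and it disposes of the degenerate possibility $b(t_0)=0$ exactly as you do, by noting that \eqref{badcondition} would then force $b'(t_0)=0$ as well, which is impossible since the tangent line to $C$ cannot pass through the interior point $0$. Your computation $b'(0)=r_0>0$ is the same observation written in polar coordinates, so the two arguments coincide.
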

\begin{proof}
First, note that $d(t_0)>0$ by choice on minimality of the  angle of $\vp(t_0)$ and by the symmetry of ${B}$.
Moreover, \eqref{badcondition} certainly cannot hold if $b(t_0)=0$ since that would also imply $b'(t_0)=0$.
Both of these cannot vanish simultaneously since $B$ is convex, and the origin is its interior point.
\end{proof}
\begin{claim}\label{differentsign}
$b'(t_0)>0$ and $d'(t_0)<0$.
\end{claim}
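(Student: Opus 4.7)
The aim is to establish the strict inequalities $b'(t_0)>0$ and $d'(t_0)<0$; combined with the positivity $b(t_0),d(t_0)>0$ from Claim \ref{samesign}, these contradict \eqref{badcondition}, which would force $(b'(t_0),d'(t_0))$ and $(b(t_0),d(t_0))$ to be parallel and hence to share their sign pattern. The strategy is geometric: read off the signs of $b'(t_0)$ and $d'(t_0)$ from the tangent direction of $C$ at $\vp(t_0)$ and $\vr(t_0)$, using the fact that the additional lattice point $\vp(t_0)+\vr(t_0)$ also lies on $C$ to pin these points down relative to the topmost portion of $C$.

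Since $0\in\mathrm{int}(B)$ and $C=\partial B$ is $\mathcal{C}^1$ by Lemma \ref{C1-boundary}, the polar parametrization $\theta\mapsto r(\theta)(\cos\theta,\sin\theta)$ is well-defined and $\mathcal{C}^1$ globally; set $y(\theta):=r(\theta)\sin\theta$. Convexity of $B$ forces the tangent direction to be a continuous non-decreasing function of $\theta$, rotating by exactly $\pi$ as $\theta$ runs over $[0,\pi]$. Consequently the horizontal-tangent set $\{\theta\in[0,\pi]: y'(\theta)=0\}$ is a (possibly degenerate) closed interval $[\theta_-^*,\theta_+^*]\subset(0,\pi)$ on which $y$ attains its maximum, while $y$ is strictly increasing on $[0,\theta_-^*]$ and strictly decreasing on $[\theta_+^*,\pi]$. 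Let $t_0<t_0'<t_1$ denote the polar angles of $\vp(t_0)$, $\vp(t_0)+\vr(t_0)$, $\vr(t_0)$ respectively (all in $(0,\pi)$ by Claim \ref{samesign}). The identity
\[
y(t_0') \;=\; b(t_0)+d(t_0) \;>\; \max\bigl(y(t_0),y(t_1)\bigr)
\]
together with unimodality rules out $t_0\ge\theta_-^*$ (which would force $y$ non-increasing past $t_0$ and hence $y(t_0')\le y(t_0)$) and symmetrically $t_1\le\theta_+^*$. Thus $t_0<\theta_-^*$ and $t_1>\theta_+^*$ strictly, which yields $y'(t_0)>0$ and $y'(t_1)<0$.

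The identification $\vp(t)=r(t)(\cos t,\sin t)$ immediately gives $b'(t_0)=y'(t_0)>0$. Writing $\vr(t)=r(\theta(t))(\cos\theta(t),\sin\theta(t))$ with $\theta(t_0)=t_1$ (and $\theta$ a $\mathcal{C}^1$ function by the implicit function argument of Corollary \ref{implicit-function}), one computes $d'(t_0)=\theta'(t_0)\,y'(t_1)$, so it remains to show $\theta'(t_0)>0$. This is where I expect the main obstacle to lie, and the plan is to argue topologically: Lemma \ref{irredcritset}(i) legitimizes a ``next vertex'' self-map $\sigma:C\to C$ sending $\vv$ to the next counterclockwise lattice point of the unique critical lattice $\Lambda_\vv$ containing $\vv$. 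The local $\mathcal{C}^1$ structure from Corollary \ref{implicit-function} shows $\sigma$ is continuous; it is fixed-point free (adjacent vertices are distinct) and satisfies $\sigma^6=\mathrm{id}$, hence is a homeomorphism of $C\cong S^1$. Any orientation-reversing self-homeomorphism of $S^1$ must have fixed points, so $\sigma$ preserves orientation, and so does $\sigma^2$, which in the angular coordinate is precisely the map $t\mapsto\theta(t)$. Therefore $\theta'(t_0)>0$, giving $d'(t_0)=\theta'(t_0)\cdot y'(t_1)<0$, and the proof of Claim \ref{differentsign} (and hence of Theorem \ref{special-theorem-appendix}) is complete.
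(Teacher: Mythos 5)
Your proof of $b'(t_0)>0$ is essentially the same as the paper's: both reduce the claim to the unimodality of the $y$-coordinate of the boundary point as a function of its polar angle, then note that the polar angle $t_0'$ of $\vp(t_0)+\vr(t_0)$ lies strictly between $t_0$ and $t_1$ while $y(t_0')=b(t_0)+d(t_0)$ exceeds both $y(t_0)$ and $y(t_1)$, so $t_0$ must lie strictly before the (possibly degenerate) interval where $y$ peaks. (The paper phrases this in terms of the slope of the supporting line at $\vp(t_0)$, which is the same thing.)

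Where you genuinely diverge from the paper is in the treatment of $d'(t_0)<0$, and your version is more careful. The paper disposes of this case with the single sentence that it ``is proved by the above argument applied to the tangent line to $C$ at $\vr(t_0)$.'' That argument establishes that the tangent line to $C$ at $\vr(t_0)$ has positive slope (equivalently, $\partial_\theta y<0$ at the polar angle $t_1$ of $\vr(t_0)$), but $d(t)$ is the $y$-coordinate of $\vr(t)$ under the implicit-function-theorem parametrization by $t$, not by $\vr$'s own polar angle $\theta$. As you correctly observe, the chain rule gives $d'(t_0)=\theta'(t_0)\cdot\partial_\theta y|_{t_1}$, so one still needs to know that the induced reparametrization $t\mapsto\theta(t)$ is orientation-preserving; without that, $d'(t_0)\ge 0$ is not a priori excluded by the convexity argument alone. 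You supply exactly this missing piece with the topological observation that the ``next vertex'' map $\sigma$ on $C\cong S^1$ is a $\mathcal{C}^1$, fixed-point-free map of finite order (being $6$-periodic, with $\sigma^3=-\operatorname{id}$, and with non-vanishing derivative since a vanishing derivative would contradict $\sigma^6=\operatorname{id}$), hence a homeomorphism, hence orientation-preserving because an orientation-reversing homeomorphism of $S^1$ always has fixed points. Then $\sigma^2$, which in angular coordinates is $t\mapsto\theta(t)$, is orientation-preserving, giving $\theta'(t_0)>0$ and hence $d'(t_0)<0$. This is a clean and self-contained way to close the point that the paper's argument passes over silently; the only cosmetic caveat is that this argument, like the paper's, is set in the irreducible case, and the reducible case is handled afterwards by Mahler's reduction lemma (so ``the proof of Theorem~\ref{special-theorem-appendix} is complete'' at the end of your proposal is a slight overstatement, but Claim~\ref{differentsign} itself is fully proved).
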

\begin{proof}
We first show that $b'(t_0)>0$. 
 {If not, then the slope of the tangent line to $C$ at $\vp(t_0)$ is non-negative; hence, by the convexity of $B$, the curve
\eq{curve}{\{\vp(t) : t_0 \le t \le \pi\}} is contained in the half-plane $\{(x,y) : y \le b(t_0)\}$. On the other hand, the point $\vp(t_0)+ \vr(t_0)$ lies on the curve \equ{curve}, and its $y$-coordinate is equal to
$b(t_0)+d(t_0)$, which is strictly larger that $b(t_0)$ by  Claim \ref{samesign}, a contradiction.}

\begin{equation}
    \begin{tikzpicture}[scale=1.6, baseline=(current  bounding  box.center)]
    \draw [<->](-1.5,0) -- (1.5,0);
    \draw [<->] (0,-0.5) -- (0,1.5);
    
    \fill [red] (0, 0) circle[radius=0.05];
    
    \fill [red] (0.707, 0.707) circle[radius=0.05];
    \draw [->, semithick, orange] (0,0) -- (0.707, 0.707);
    \node [below] at (0.707, 0.707) {\footnotesize $\vp$};
    
    \fill [red] (-0.259, 0.966) circle[radius=0.05];
    \draw [->, semithick, orange] (0,0) -- (-0.259, 0.966);
    \node [above] at (-0.259, 0.966) {\footnotesize $\vr + \vp$};
    
    \fill [red] (-0.966, 0.259) circle[radius=0.05];
    \draw [->, semithick, orange] (0,0) -- (-0.966, 0.259);
    \node [left] at (-0.966, 0.259) {\footnotesize $\vr$};
    
    \draw [dotted, semithick, orange] (0.707, 0.707) -- (-0.259, 0.966);
    \draw [dotted, semithick, orange] (-0.259, 0.966) -- (-0.966, 0.259);
    
    \draw [<->, blue] (1.25,0.707) -- (-1.25,0.707);
    
    \node [below] at (0,-0.6) {\footnotesize $b'(t_0)$ cannot be less than or equal to $0$.};
    \end{tikzpicture}
\end{equation}

 {The inequality $d'(t_0) < 0$ is  proved by the above argument applied to the tangent line to $C$ at $\vr(t_0)$.}
\end{proof}

Now the above two claims and condition \eqref{badcondition} are incompatible, and we have reached our contradiction. 
This shows that the critical locus of ${B}$ is transversal to the distribution generated by $\operatorname{Lie}(P)$
as required.

In order to generalize to the case when $B$ is reducible, we use Mahler's important result that each convex, bounded, symmetric domain ${B}$ contains an irreducible ${B'}$ which
has the same critical determinant, that is $\Delta_{B} = \Delta_{B'}$ (see \cite[Theorem $1$]{ma2}).
We claim that since $B$ is not a parallelogram, this irreducible $B'$ cannot be a parallelogram either.
For in this case
$$\Delta_{B'} = \frac{\area(B')}{4} < \frac{\area(B)}{4} \leq \Delta_B,$$ which is a contradiction.

Finally, the containment $B' \subset B$ and the equality $\Delta_{B'} = \Delta_B$ show that the $B$-critical locus is contained in the $B'$-critical locus. 
This completes the proof.
\end{proof}

\end{document}